\theoremstyle{plain}
\newtheorem{theorem}{Theorem}[section]
\newtheorem{corollary}[theorem]{Corollary}
\newtheorem{proposition}[theorem]{Proposition}
\theoremstyle{definition}
\newtheorem{definition}[theorem]{Definition}
\newtheorem{example}{Example}
\theoremstyle{remark}
\newtheorem{remark}{Remark}
\newcommand{\M}{\mathcal{M}}
\begin{document}

\title[ Billiard problems]
      {A survey on Polynomial in momenta integrals for billiard problems}

\date{March 2018}
\author{Misha Bialy and Andrey E. Mironov}
\address{M. Bialy, School of Mathematical Sciences, Tel Aviv
University, Israel} \email{bialy@post.tau.ac.il}
\address{A.E. Mironov,
 Sobolev Institute of Mathematics, 4 Acad. Koptyug avenue,
and Novosibirsk State University, Pirogova st 1,
630090, Novosibirsk, Russia}
\email{mironov@math.nsc.ru}
\thanks{M.B. was supported in part by the Israel Science Foundation grant
162/15}
\thanks{ A.M. was supported by the Laboratory of Topology and Dynamics, Novosibirsk State University (contract no. 14.Y26.31.0025 with the Ministry of Education and Science of the Russian Federation).}

\subjclass[2010]{37J40,37J35} \keywords{{
polynomial integrals, Birkhoff conjecture, magnetic billiards, Two-sided magnetic billiards}}

\begin{abstract}
	In this paper we give a short survey of recent results on algebraic version of the Birkhoff
	conjecture for integrable billiards on surfaces of constant curvature. We also discuss integrable
	magnetic billiards. As a new application of the algebraic technique we  study the existence of polynomial integrals for the  two-sided
	magnetic billiards introduced by Kozlov and Polikarpov.
	
\end{abstract}

\maketitle



\section{\bf Introduction}
In this survey paper we describe an algebraic approach to a very old conjecture attributed to G. Birkhoff on billiard dynamics.
The conjecture was explicitly formulated in \cite{poritsky} and since then remains unsolved in full generality.
The question is if the only integrable convex billiards in the plane are ellipses.

Here it is crucial to specify what is understood by integrability. Accordingly, there are various approaches to this conjecture.
In this paper we shall discuss only one possible  approach to the problem --- namely algebraic one. In this approach it is assumed that there exists a first integral of the billiard ball motion which is polynomial in velocities. It is very natural to consider this class of integrals from the point of view of classical mechanics.

We are not going to discuss here many other developments in the direction of Birkhoff conjecture, but only mention them.
In \cite {delshams} it is shown that perturbations of ellipses create splitting of separatrices.
In \cite{B1} (see also \cite{W}) it is proved that the only billiards with the phase cylinder foliated by rotational invariant curves are circles.
In \cite{Innami} (see also \cite{BA} for geometric approach) it is proved that if there exist sequence of convex caustics with the rotation numbers tending to one half, then the billiard is an ellipse. In \cite{treshsev} an evidence of possible integrable dynamics around a 2-periodic orbits is given. In \cite{Marco} polynomial entropy approach to the problem is suggested.
Finally we will not discuss here series of recent results by V. Kaloshin et al. (\cite{KS} with references therein)  proving a local version of Birkhoff conjecture in a neighborhood of ellipses in a suitable functional spaces.

Algebraic approach to Birkhoff conjecture was initiated by S.~Bolotin \cite{bolotin}, \cite{bolotin2}. He studied billiards on the plane and on constant curvature surfaces.

Next very influential step was done by S. Tabachnikov in \cite{Tab} for the so-called outer billiards. His approach in a sense is dual to that of S.~Bolotin. It is conjectured in \cite{Tab} that if there exists a polynomial function which is preserved by outer billiard dynamics, then the curve must be an ellipse.  Recently A.~Glutsyuk and E. Shustin \cite{g2} confirmed this conjecture in affirmative.

Next step for Birkhoff billiards was done in our paper \cite{BM1}.
In \cite{BM1} we introduced the {\it angular billiard} which is dual to the Birkhoff billiard. Using angular billiard we obtained new results on Birkhoff billiard (see bellow). In particular we derived and studied a remarkable equation similar to one studied in \cite{Tab}. Analogous results were obtained also for Birkhoff billiards on constant curvature surfaces \cite{BM2}.

Finally, A. Glutsyuk \cite{g1}, \cite{g3} using the results of \cite{BM1}, \cite{BM2} completed the proof of algebraic Birkhoff conjecture for billiards on the plane and constant curvature surfaces.

Let us remark that though algebraic approach is restricted to the class of algebraic curves and polynomial integrals for them, but on the other hand it does not require closeness to the ellipses. Moreover, this approach allows to consider piecewise smooth boundaries.

It turns out that the algebraic approach can be extended to the case of magnetic billiards. This extension is based on the interplay of differential and algebra-geometric properties of the equidistant curves of the boundary of billiard domain. We have implemented it in \cite{BM3} for the plane magnetic billiards and then in \cite{BM4} for magnetic billiards on the constant curvature surfaces. In the present paper we apply algebraic approach for the model of two-sided magnetic billiards, where the magnetic field changes sign for every reflection of the boundary. This model was introduced by
V. Kozlov and S. Polikarpov in \cite{k}.

The paper is organized as follows. In Sections 2, 3 we survey the results which lead to the solution of algebraic Birkhoff conjecture in the plane and constant curvature surfaces. In Section 4 we describe our results for magnetic billiards in the plane and on surfaces of constant curvature.
In the last Section 5 we treat the case of two-sided magnetic billiards.
This part is a new ingredient in the paper and therefore it is given with all the details.

\section {\bf Algebraic Birkhoff conjecture and Angular billiard}
Let $\Omega$ be a convex domain in ${\mathbf R}^2$ with the smooth boundary
$\gamma=\partial\Omega$. We consider the billiard motion of a particle in $\Omega$. The particle
moves along a straight line inside $\Omega$, while reaching the boundary $\gamma$
it is reflected according to the law of geometric optics. This dynamical system is called {\it Birkhoff billiard}. The Birkhoff billiard is called {\it algebraically integrable} if there is a polynomial in velocity $v=(v_1,v_2)$ first integral $F(x,y,v)$ which is a non-constant function on the energy level $\{|v|=1\}$. There are two
examples:

\vskip03mm

\begin{example} Let $\Omega$ be the interior of the circle
$$x^2+y^2=R^2,$$
 then Birkhoff billiard admits the first integral
 $$F=yv_1-xv_2.$$
\end{example}
\vskip03mm

\begin{example}
Let $\Omega$ be the interior of the ellipse
$$
 \frac{x^2}{a^2}+\frac{y^2}{b^2}=1,\quad a>b>0,
$$
then the Birkhoff billiard admits the first integral
$$F=b^2v_1^2+a^2v_2^2-(xv_2-yv_1)^2.$$
\end{example}
\vskip03mm

The algebraic version of the Birkhoff conjecture states that if a Birkhoff billiard is algebraically integrable, then $\gamma=\partial\Omega$ is an ellipse.

Recently this conjecture was completed by A. Glutsyuk \cite{g1} using our results on angular billiards \cite{BM1} which we will discuss below.

In \cite{bolotin} S. Bolotin  proved the following result.

\vskip4mm
\begin{theorem}( \cite{bolotin})  Assume
that Birkhoff billiard inside $\gamma$ admits a non-constant
polynomial integral $\Phi$ on the energy level $\{|v|=1\}.$ It then
follows that $\gamma$  is a real algebraic curve. Moreover, let $\tilde{\gamma}$
 be the corresponding irreducible curve in $\mathbf{C}P^2.$ Then, the
following alternative holds: either ${\gamma}$ is an ellipse, or $\tilde{\gamma}$ necessarily contains singular
points.
\end{theorem}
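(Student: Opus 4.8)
\emph{Reduction to a homogeneous integral.} Decompose $\Phi=\sum_k\Phi_k$ into its $v$-homogeneous parts. The free-motion invariance $v_1\Phi_x+v_2\Phi_y=0$ holds degree by degree, and the reflection $v\mapsto R_qv$ at a boundary point $q\in\gamma$ (reflection in the tangent line to $\gamma$) is a \emph{linear} map, hence degree preserving; therefore each $\Phi_k$ is itself a first integral of the billiard on every energy level. Since $\Phi$ is non-constant on $\{|v|=1\}$, some $\Phi_k$ is not proportional to $(v_1^2+v_2^2)^{k/2}$, and after dividing out all powers of the energy $v_1^2+v_2^2$ (which is itself an integral) I may assume that $\Phi$ is homogeneous of degree $n$ and not divisible by $v_1^2+v_2^2$. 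By the classical description of polynomial-in-momenta first integrals of the free motion in $\mathbf{R}^2$ (they are the polynomials in $v_1$, $v_2$ and the angular momentum $L=xv_2-yv_1$) we get $\Phi=\sum_{i+j+k=n}c_{ijk}v_1^{i}v_2^{j}L^{k}$ with constant $c_{ijk}$; in particular $\Phi$ is already a polynomial in $(x,y,v)$.

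\emph{The remarkable equation.} Put $z=x+\I y$. On the isotropic direction $v=(1,\I)$ one has $L=\I z$, so $\Phi(q,(1,\I))=p(z)$ is a polynomial of degree $\le n$ in $z$ alone; by reality of $\Phi$, $\Phi(q,(1,-\I))=\bar p(\bar z)$ (the polynomial with conjugate coefficients), and non-divisibility of $\Phi$ by $v_1^2+v_2^2$ is exactly the statement $p\not\equiv0$. Reflection at $q$ interchanges the two isotropic directions, precisely $R_q(1,\I)=-\nu^2(1,-\I)$ with $\nu$ the unit normal (viewed as a complex number); from $\Phi(q,v)=\Phi(q,R_qv)$, $n$-homogeneity, and $-\nu^2=dz/d\bar z$ along $\gamma$ one obtains along $\gamma$
\begin{equation}\label{eq:rem}
p(z)\,(d\bar z)^n=\bar p(\bar z)\,(dz)^n .
\end{equation}
Parametrising $\gamma$ by $z=z(t)$ with unit tangent $e^{\I\theta}$, \eqref{eq:rem} reads $e^{-2\I n\theta}=\overline{p(z)}/p(z)$; since $\theta$ gains $2\pi$ around the convex closed curve, the argument principle forces $\deg p=n$ with all roots of $p$ strictly inside $\Omega$. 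Equivalently, \eqref{eq:rem} means that $\gamma$ lies on a level set of $\im\!\big(c_0\!\int^{z}\!dw/\sqrt[n]{p(w)}\big)$ for a suitable constant $c_0$, i.e. $\gamma$ is a trajectory of the $n$-differential $(dz)^n/p(z)$.

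\emph{Algebraicity of $\gamma$.} Here one must use the reflection law in \emph{all} directions, not only its consequence \eqref{eq:rem}. In the orthonormal frame $(\tau(q),\nu(q))$ the polynomial $(a,b)\mapsto\Phi(q,a\tau+b\nu)$ must be even in $b$; vanishing of the coefficient of each odd power $b^{m}$ yields polynomial equations $\psi_m(x,y,\tau_1,\tau_2)=0$ holding along $\gamma$ with $(\tau_1,\tau_2)$ the unit tangent. For $n\ge3$ there are at least two such equations — for instance $\psi_1$, which states that $\tau(q)$ is a critical point of $v\mapsto\Phi(q,v)|_{\{|v|=1\}}$, together with $\psi_3$ (or $\psi_n$); eliminating $(\tau_1,\tau_2)$ from them and $\tau_1^2+\tau_2^2=1$ produces a non-trivial polynomial relation $R(x,y)=0$ on $\gamma$, so $\gamma$ is a real-analytic arc of an irreducible real algebraic curve. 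For $n=1,2$, where only $\psi_1=0$ is available, I would argue directly: for $n=1$ this equation is a linear ("rotation") direction field whose closed integral curves are circles, and for $n=2$ it forces $\gamma$ to be a conic; one also has to dispose of the degenerate possibility $R\equiv0$. Let $\tilde\gamma\subset\mathbf{C}P^2$ be the projective complexification of the resulting irreducible curve.

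\emph{The dichotomy.} Suppose $\tilde\gamma$ is smooth. View $\tilde\gamma$ as a compact Riemann surface of genus $g$ and let $\zeta:=\bar z$ be an independent coordinate; then \eqref{eq:rem} becomes $(dz)^n/p(z)=(d\zeta)^n/q(\zeta)$ with $q=\bar p$, a globally defined meromorphic $n$-differential on $\tilde\gamma$, or equivalently the divisor identity $n[(dz)-(d\zeta)]=(p(z))-(q(\zeta))$. On a \emph{smooth} curve the canonical class has degree $2g-2$, and $(dz)$, $(d\zeta)$ are supported only at the ramification points of the projections $z,\zeta\colon\tilde\gamma\to\mathbf{C}P^1$ and over $z=\infty$, $\zeta=\infty$, while $(p(z))$, $(q(\zeta))$ are supported over the (interior, hence off $\gamma$) roots of $p,q$ and over infinity. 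Feeding this local information into the genus formula, and using the anti-holomorphic involution of $\tilde\gamma$ that swaps $z\leftrightarrow\zeta$ and has $\gamma$ as fixed locus together with the convexity of $\gamma$, forces $g=0$ and $\deg\tilde\gamma=2$ — equivalently, on a smooth curve the integral $\int dz/\sqrt[n]{p(z)}$ yields an algebraic level curve only if $\tilde\gamma$ is rational and $dz/\sqrt[n]{p}$ reduces to a third-kind differential with two simple poles, which pins $\deg\tilde\gamma$ to $2$. Then $\tilde\gamma$ is a smooth conic, and being the complexification of a bounded closed curve it is an ellipse. I expect this last step to be the real obstacle: the local bookkeeping of the divisor of $(dz)^n/p(z)$ at the ramification points and along the line at infinity, and the way the real structure and convexity turn it into the bound $g=0$, $\deg\tilde\gamma=2$; a secondary technical point is verifying non-degeneracy of the elimination in the previous step, while the remaining cases ($n\le2$ and the degenerate sub-case) are comparatively routine.
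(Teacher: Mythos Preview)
This theorem is not proved in the paper; it is quoted from Bolotin \cite{bolotin} and the paper then moves on to its own angular-billiard machinery, so there is no in-paper argument to compare against. Your outline follows Bolotin's complex-analytic strategy --- the reduction to a homogeneous integral and the relation $p(z)(d\bar z)^n=\bar p(\bar z)(dz)^n$ along $\gamma$ are his --- but two of your four steps remain programmes rather than proofs.

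\emph{Algebraicity.} Your elimination of $(\tau_1,\tau_2)$ from $\psi_1=\psi_3=0$ and $\tau_1^2+\tau_2^2=1$ is not shown to be non-degenerate; you flag ``the degenerate possibility $R\equiv0$'' but do not exclude it, and since for a generic $(x,y)$ that system certainly has solutions in $(\tau_1,\tau_2)$, the resultant may well vanish identically. The clean route is different: the value of $\Phi$ on unit tangents to $\gamma$ is constant (it is the limiting value of the integral along shrinking chords), so after subtracting it one may assume $\Phi$ vanishes on tangent vectors; in the representation $\Phi=\Phi(\sigma,v_1,v_2)$ this directly exhibits a non-trivial polynomial vanishing on the dual curve $\Gamma$ (this is exactly what underlies the paper's Theorem~2.2), and algebraicity of $\Gamma$ yields that of $\gamma$.

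\emph{The dichotomy.} This is the substance of the theorem, and you have only a sketch, as you yourself concede. The meromorphic $n$-differential identity on $\tilde\gamma$ is the right object, but ``feeding this into the genus formula \ldots\ forces $g=0$, $\deg\tilde\gamma=2$'' is precisely the calculation that must be done. Concretely, you have not computed the local orders of $(dz)^n/p(z)$ at the points of $\tilde\gamma$ over $z=\infty$ and at the ramification points of the projections $z,\zeta$, nor explained how the real structure and the convexity of $\gamma$ constrain those orders; without that bookkeeping the divisor identity $n[(dz)-(d\zeta)]=(p(z))-(q(\zeta))$ yields nothing beyond an equality of degrees. Bolotin's proof carries out exactly this local analysis at the intersections of $\tilde\gamma$ with the line at infinity and with the isotropic lines $L_\pm$ (the same lines that reappear in the paper's Theorem~\ref{th2.4}); that analysis is what is missing from your argument.
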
 \vskip3mm

Let us recall the construction of the angular billiard. Let $D\subset{\mathbf R}^2$ be a convex
domain with smooth boundary $\Gamma=\partial D$. We fix a point $O\in D$. For an arbitrary point
$A\in U={\mathbf R}^2\backslash D$ there are two tangent lines to $\Gamma$ passing through $A$. Let $l$ be the right tangent (if one looks at $\Gamma$ from $A$). There is a unique line $l_A$ passing through $O$ such that the angle between $l_A$ and $OT$ equals to the angle $AOT$ , where $T$ is the tangency point (see Fig. \ref{-1}).
\begin{figure}[h]
	\centering
	\includegraphics[width=0.5\linewidth]{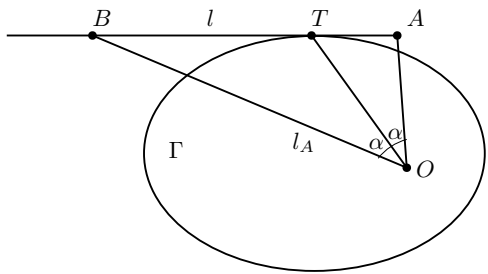}
	\caption{Equal angles $\angle AOT=\angle TOB$, $\mathcal A(A)=B$.}
	\label{-1}
\end{figure}
We get the mapping
$$
 {\mathcal A}:U\backslash S\rightarrow U,\qquad {\mathcal A}(A)=B,
$$
where $S=\{A:l_A\parallel AT\}$. The mapping ${\mathcal A}$ is called the {\it angular} billiard.
It turns out that the dynamics of the Birkhoff billiard for $\gamma$ is {\it equivalent} to the dynamics of the angular billiard for the polar dual curve
$\Gamma$. By definition, $\Gamma$ consists of the points which
are dual to the tangent lines of $\gamma.$

In order to explain this equivalence let us recall the geometric construction of polar duality correspondence between points and lines in ${\mathbf R}^2$. Traditionally, lines are denoted by small letters and corresponding
dual points are denoted by capital letters.
Fix a point $O\in{\mathbf R}^2$.
For a given line $l$ not passing
through $O$ denote by $p$ the distance from $O$ to $l$. Then the dual point $L$ corresponding to
$l$ is the
point lying on the normal radial ray to $l$ at the distance $1/p$ from $O$.

 Duality preserves the incidence relation and dual to
$\Gamma$ is $\gamma$ again. More precisely, if $t$ is tangent to
$\gamma$ at $L$ then the dual line $l$ is tangent to $\Gamma$ at $T$
(see Fig. \ref{01}).

Furthermore, suppose the particle moving along line $a$ after the collision with $\gamma$ at $L$ is reflected to the line $b$. It then follows that the dual points
$A,B$ lie on the line $l$ which is tangent to $\Gamma$ at $T$ (see Fig. \ref{01}).
Moreover it is easy to see that the angles $AOT$ and $BOT$ are equal, so the rule of the angular billiard 
holds:
$${\mathcal A}(A)=B.$$

\begin{figure}[h]
	\centering
	\includegraphics[width=0.9\linewidth]{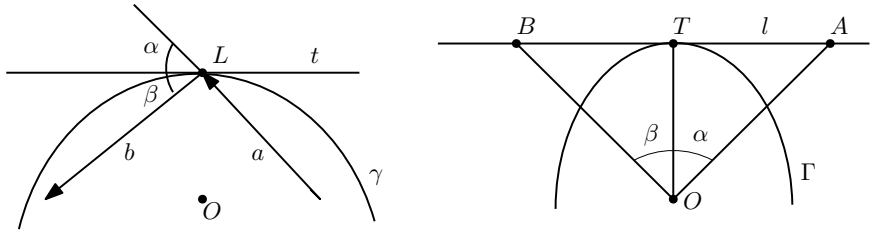}
	\caption{Polar duality, $\beta=\alpha$.}
	\label{01}
\end{figure}
The angular billiard is called {\it integrable} if there is a non-constant function
$G:U\backslash S\rightarrow {\mathbf R}$ which is invariant under the action of ${\mathcal A}$, i.e.
$$G(A)=G({\mathcal A}(A)), \qquad A\in U\backslash S.$$

\vskip03mm
\begin{example}
	Let $\Gamma$ be an ellipse defined by the equation
	$$
	\frac{x^2}{a^2}+\frac{y^2}{b^2}=1,\quad a>b>0,
	$$
	and $O(x_0,y_0)$ be an arbitrary point inside the ellipse. Then the angular billiard is integrable with the rational integral
	$$
	G(x,y)=\frac{b^2x^2+a^2y^2-a^2b^2}{(x-x_0)^2+(y-y_0)^2}.
	$$
\end{example}
\vskip03mm
It follows that if the Birkhoff billiard is algebraically integrable inside $\gamma$, then the angular billiard for the dual curve $\Gamma$ is integrable as well. Moreover, the integral of the angular billiard can be written explicitly in terms of the integral of Birkhoff billiard. More precisely, 
let $\Phi$ be a polynomial integral of Birkhoff billiard $\gamma$. One can assume (see \cite{bolotin}) that $\Phi$ is in the form
$$\Phi=\Phi(\sigma,v_1,v_2), \quad \sigma=xv_2-yv_1,$$
where $\Phi$ is a homogeneous polynomial in $\sigma, v_1,v_2$ of even degree. Moreover, one can assume that $\Phi$ vanishes on the tangent vectors to $\gamma$.

\begin{theorem}(\cite{BM1}) Let $\gamma$ be a closed convex curve and $\Phi(\sigma,v_1,v_2)$ be a homogeneous polynomial integral of even degree $n=2p$, vanishing on tangent vectors to $\gamma$.
Then the angular billiard for dual curve $\Gamma$ is also integrable with the integral of the form
$$
 G(x,y)=\frac{F(x,y)}{(x^2+y^2)^p},
$$
where the polynomial $F=\Phi(1,y,-x)$ is of degree $n$. Moreover, $F$  vanishes on $\Gamma$.
\end{theorem}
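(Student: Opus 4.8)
The plan is to exploit the explicit duality dictionary between Birkhoff and angular billiards that was already described geometrically in the text, and to convert it into an algebraic identity for the candidate integral $G(x,y)=F(x,y)/(x^2+y^2)^p$. First I would fix notation: given $\Phi=\Phi(\sigma,v_1,v_2)$ homogeneous of degree $n=2p$, set $F(x,y):=\Phi(1,y,-x)$, which is a polynomial of degree $n$ in $(x,y)$ because substituting $\sigma=1$ kills homogeneity in the first slot while the remaining slots contribute degree at most $n$. The key bookkeeping observation is homogeneity: for any $\lambda\neq 0$ one has $\Phi(\lambda\sigma,\lambda v_1,\lambda v_2)=\lambda^{n}\Phi(\sigma,v_1,v_2)$, so $\Phi(\sigma,v_1,v_2)=\sigma^{n}\Phi(1,v_1/\sigma,v_2/\sigma)$ whenever $\sigma\neq 0$. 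Under polar duality a line tangent to $\Gamma$ corresponds to a point on $\gamma$, and the correspondence between a velocity direction $(v_1,v_2)$ along $\gamma$ at the dual point and the point $A=(x,y)\in U$ on the tangent line to $\Gamma$ is, up to the scaling built into the $1/p$-duality convention, $(v_1,v_2)\leftrightarrow (y,-x)/(x^2+y^2)$ and $\sigma=xv_2-yv_1 \leftrightarrow 1/(x^2+y^2)$. Plugging these into the homogeneity relation gives
$$\Phi(\sigma,v_1,v_2)=\frac{1}{(x^2+y^2)^{n}}\,\Phi\!\left(1,\,y,\,-x\right)\cdot(x^2+y^2)^{\,?}$$
and after matching the powers one lands exactly on $G(x,y)=F(x,y)/(x^2+y^2)^{p}$; I would carry out this power-counting carefully since it is where the exponent $p$ (as opposed to $n$) enters.

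Next I would establish invariance $G(A)=G(\mathcal A(A))$. Here I invoke the geometric equivalence already stated in the excerpt: if the billiard particle travels along line $a$, reflects at $L\in\gamma$, and leaves along line $b$, then the dual points $A,B$ both lie on the tangent line $l$ to $\Gamma$ at the tangency point $T$, and $\mathcal A(A)=B$. Because $\Phi$ is a first integral of the Birkhoff billiard, its value on the incoming line $a$ (computed from the point $L$ and the unit velocity along $a$) equals its value on the outgoing line $b$. Translating both sides through the dictionary of the previous paragraph turns this equality of $\Phi$-values into the equality $G(A)=G(B)=G(\mathcal A(A))$. One subtlety: $\Phi$ is evaluated on the energy level $\{|v|=1\}$, whereas the dual correspondence naturally produces velocity vectors of length $1/\sqrt{x^2+y^2}$; homogeneity of $\Phi$ absorbs exactly this normalization, which is the reason the denominator $(x^2+y^2)^p$ appears and is also the reason $G$ is well-defined (the ambiguity in choosing a representative velocity cancels).

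Finally I would prove that $F$ vanishes on $\Gamma$. This uses the extra normalization recalled just before the theorem: one may assume $\Phi$ vanishes on tangent vectors to $\gamma$. A point $P\in\Gamma$ is dual to a tangent line $t$ of $\gamma$; the velocity direction of that tangent line $t$, transported back through duality, is precisely a tangent vector to $\gamma$ at the corresponding point. So $\Phi$ evaluated there is zero, and by the identity $F=\Phi(1,y,-x)$ (with $(x,y)=P$, modulo the scaling already accounted for) this forces $F(P)=0$. Thus $\Gamma\subset\{F=0\}$.

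The main obstacle I anticipate is not any single computation but getting the duality dictionary exactly right: tracking how the $1/p$-duality convention rescales a point, its tangent direction, and the quantity $\sigma=xv_2-yv_1$, and then verifying that these rescalings conspire to produce $(x^2+y^2)^{p}$ in the denominator rather than some other power. Once that dictionary is pinned down, invariance and vanishing on $\Gamma$ follow almost formally from the corresponding properties of $\Phi$ together with the geometric equivalence of the two billiards stated earlier in the text.
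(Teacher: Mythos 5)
Your argument is correct and follows exactly the route intended here (and carried out in \cite{BM1}): at the dual point $(x,y)$ of a chord the projective line data is $[\sigma:v_1:v_2]=[1:y:-x]$, homogeneity of even degree $2p$ converts the normalization $|v|=1$ into the factor $(x^2+y^2)^p$, invariance of $G$ is the dual restatement of invariance of $\Phi$ via ${\mathcal A}(A)=B$, and vanishing of $F$ on $\Gamma$ follows from $\Phi$ vanishing on tangent lines (together with $O\notin\Gamma$). The only loose ends are the sign conventions in your dictionary (harmless, since $n$ is even, but as written the two sides of your correspondence are off by a sign) and the claim that $\deg F$ equals $n$ rather than merely $\le n$, which requires ruling out that $\sigma$ divides $\Phi$.
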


The angular billiard is an effective tool to study Birkhoff billiard.  Several new results on algebraic Birkhoff conjecture were obtained in \cite{BM1}, \cite{BM} using angular billiard. Let us describe these results. Denote by  $f$ the minimal defining polynomial of $\Gamma$. Since $F=0$ on $\Gamma$ we have:
$$
 F=f^k(x,y)g_1(x,y),
$$
where $g_1$ does not vanish identically on $\Gamma$. Let $\Gamma_1$ be an arc of $\Gamma$ where $g_1>0$. Then $\tilde{G}=G^{1/k}$ is also an integral of angular billiard and
$$
 \tilde G(x,y)=\frac{\tilde F(x,y)}{(x^2+y^2)^m},\quad \tilde F:=f(g_1)^{1/k}=fg,
$$ where $g:=g_1^{1/k},\   m:=p/k$.
The property that $\tilde G$ is an integral of angular billiard implies the remarkable identity:
\begin{equation}\label{eq1}
 \left(-\frac{\mu}{\varepsilon}\right)^{2m}\tilde F(x+\varepsilon \tilde F_y,y-\varepsilon \tilde F_x)=\tilde F(x+\mu \tilde F_y,y-\mu \tilde F_x)
\end{equation}
for small real $\varepsilon$, $(x,y)\in\Gamma_1$,
$$
  \mu=-\frac{(x^2+y^2)\varepsilon}{x^2+y^2+2\varepsilon(x\tilde F_y-y\tilde F_x)}.
$$
From (\ref{eq1}) it follows that on every tangent line to the completion of $\Gamma$ in ${\mathbf C}P^2$ acts a projective involution leaving invariant the set of intersection points of the tangent line with the projective curve. Equation (\ref{eq1}) yields  the following identity (Theorem 6.1 in \cite{BM1}).

\begin{theorem} (\cite{BM1}) The following formula holds true for all $(x,y)\in\Gamma_1$:
\begin{equation}\label{eq2}
g^3(x,y)H(f(x,y))=c_1(x^2+y^2)^{3m-3},
\end{equation}
where $c_1$ is a non-zero constant and
$$H(f):=
f_{xx}f_y^2-2f_{xy}f_xf_y+f_{yy}f_x^2
$$
is the affine Hessian of function $f$.
\end{theorem}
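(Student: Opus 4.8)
The plan is to turn the functional equation (\ref{eq1}) into a first order ordinary differential equation for $H(\tilde F)$ along $\Gamma_1$ and integrate it. Fix $(x,y)\in\Gamma_1$ and restrict $\tilde F$ to the tangent line of $\Gamma$ at this point, in the affine parameter already occurring in (\ref{eq1}):
$$
\phi(\varepsilon):=\tilde F\big(x+\varepsilon\tilde F_y,\ y-\varepsilon\tilde F_x\big)=\sum_{j\ge0}a_j(x,y)\,\varepsilon^{\,j},\qquad a_j=\tfrac1{j!}\,\phi^{(j)}(0).
$$
Since $(x,y)\in\Gamma=\{f=0\}$ we have $a_0=\phi(0)=\tilde F(x,y)=0$; since $(\tilde F_y,-\tilde F_x)$ is the skew-gradient direction of $\tilde F$ we have $a_1=\phi'(0)\equiv0$; and one further differentiation gives $2a_2=\phi''(0)=\tilde F_{xx}\tilde F_y^2-2\tilde F_{xy}\tilde F_x\tilde F_y+\tilde F_{yy}\tilde F_x^2=H(\tilde F)(x,y)$. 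Substituting $\tilde F=fg$ and using $f=0$ on $\Gamma_1$, all cross terms in $H(fg)$ cancel, so $H(\tilde F)=g^3H(f)$ on $\Gamma_1$; hence it is enough to prove $a_2=c_1'(x^2+y^2)^{3m-3}$ there for a constant $c_1'$.

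First I would read off from (\ref{eq1}) the relation between $a_2$ and $a_3$. With $-\mu/\varepsilon=(x^2+y^2)\big/\big(x^2+y^2+\varepsilon d\big)$ and $d:=2(x\tilde F_y-y\tilde F_x)$, I expand both sides of (\ref{eq1}) in powers of $\varepsilon$; the coefficients of $\varepsilon^0,\varepsilon^1,\varepsilon^2$ agree automatically (using $a_0=a_1=0$), and comparing the coefficients of $\varepsilon^3$ yields
$$
a_3=(m-1)\,\frac{d}{x^2+y^2}\;a_2 .
$$
Conceptually this is the infinitesimal form of the projective-involution statement recorded after (\ref{eq1}): in coordinates adapted to the two fixed points of the involution the binary form obtained by homogenizing $\phi$ is even, and reading off its lowest coefficients gives the same identity.

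The heart of the proof is the ``moving tangent line'' identity
$$
(v\cdot\nabla)\,a_2=3\,a_3\qquad\text{on }\Gamma_1,\qquad v:=(\tilde F_y,-\tilde F_x),
$$
which says that the derivative of the quadratic Taylor coefficient $a_2(x,y)=\tfrac12H(\tilde F)(x,y)$ along $\Gamma$ reproduces three times the cubic one. To prove it one computes $(v\cdot\nabla)a_2$ via the second order expansion of $\tilde F\big(x+\varepsilon v_1(x,y),\,y+\varepsilon v_2(x,y)\big)$ while letting the base point move in direction $v$: the acceleration $w:=(v\cdot\nabla)v$ equals $J\,\mathrm{Hess}(\tilde F)\,v$ with $J=\left(\begin{smallmatrix}0&1\\-1&0\end{smallmatrix}\right)$, hence $w^{T}\mathrm{Hess}(\tilde F)\,v=-\det(\mathrm{Hess}\,\tilde F)\,v^{T}Jv=0$ by the $2\times2$ identity $M^{T}JM=\det M\cdot J$ and antisymmetry of $J$ (the companion relation $\nabla\tilde F\cdot w=-H(\tilde F)$ makes the vanishing of the linear term consistent with $a_1\equiv0$), and the remaining terms collapse to the frozen third derivative $\phi'''(0)=6a_3$. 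Combining the two displayed relations with $(v\cdot\nabla)(x^2+y^2)=d$ gives $(v\cdot\nabla)\log\!\big(a_2/(x^2+y^2)^{3(m-1)}\big)=0$; since $v$ is tangent to $\Gamma$ and $\Gamma_1$ is connected, the ratio is a constant $c_1'$. Therefore $g^3H(f)=2a_2=c_1(x^2+y^2)^{3m-3}$ with $c_1=2c_1'$, which is nonzero unless $\Gamma_1$ is a line segment (in which case $H(f)\equiv0$ and the statement holds trivially).

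The step I expect to be the main obstacle is exactly $(v\cdot\nabla)a_2=3a_3$: one must keep carefully apart the genuine derivative along $\Gamma$ of the function $(x,y)\mapsto H(\tilde F)(x,y)$ and the ``frozen-direction'' Taylor coefficients $a_j$ of $\phi$, and they agree only because of the cancellation $w^{T}\mathrm{Hess}(\tilde F)\,v=0$. By contrast the $\varepsilon$-expansion of (\ref{eq1}) leading to the relation between $a_2$ and $a_3$, once the weight $(-\mu/\varepsilon)^{2m}$ is expanded, and the cancellations in $H(fg)$ on $\Gamma$, are routine.
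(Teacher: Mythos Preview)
The survey paper does not actually prove this theorem; it only records that ``Equation (\ref{eq1}) yields the following identity (Theorem 6.1 in \cite{BM1})'' and states the result. So there is no proof in the paper to compare against directly. That said, the paper does carry out an entirely analogous computation in Section~5 for the two-sided magnetic billiard: expand the functional equation in powers of the small parameter, read off the $\epsilon^3$-relation, interpret it as a first-order linear ODE for a curvature-type quantity along the boundary curve, and integrate. Your argument follows exactly this template, so methodologically it matches what the authors do elsewhere and presumably in \cite{BM1}.

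On correctness: your steps check out. The reduction $H(\tilde F)=g^3H(f)$ on $\{f=0\}$ is a straightforward cancellation; the $\varepsilon^3$-balance in (\ref{eq1}) indeed gives $a_3=(m-1)\dfrac{d}{x^2+y^2}\,a_2$ with $d=2(x\tilde F_y-y\tilde F_x)$; and the ``moving tangent line'' identity $(v\cdot\nabla)a_2=3a_3$ holds globally because the extra term $w^T\mathrm{Hess}(\tilde F)\,v$ with $w=J\,\mathrm{Hess}(\tilde F)\,v$ vanishes via $M^TJM=(\det M)J$ and $v^TJv=0$. Combining these and noting $(v\cdot\nabla)(x^2+y^2)=d$ gives the claimed logarithmic derivative identity along $\Gamma_1$, and integration finishes it. The nonvanishing of $c_1$ follows since $\Gamma$ is strictly convex (so $H(f)\neq 0$ on an arc) and $g\neq 0$ on $\Gamma_1$ by construction.

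One presentational remark: you should state explicitly that the relation $a_3=(m-1)\dfrac{d}{x^2+y^2}a_2$ holds only on $\Gamma_1$ (it comes from (\ref{eq1})), whereas $(v\cdot\nabla)a_2=3a_3$ is a pointwise identity in the plane; the combination is then used only along $\Gamma_1$, where $v$ is tangent and nonvanishing, so that the ODE integrates to a constant on the connected arc. You say this, but the phrasing could be sharpened so the reader does not worry about where each identity is valid.
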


Identity (\ref{eq2}) implies the following results:

\begin{theorem}\label{th2.4} (\cite{BM1})
 Suppose that Birkhoff billiard inside
$\gamma$ admits a non-constant polynomial integral $\Phi$ on the
energy level $\{|v|=1\}.$ Let $\Gamma$ be the polar dual curve to $\gamma$, and $\tilde{\Gamma}$ be the corresponding irreducible curve in $\mathbf{C}P^2.$ Then, either
 $\tilde{\Gamma}$ has degree 2, or $\tilde{\Gamma}$ necessary contains singular points.
 Moreover, all singular and
inflection points of $\tilde{\Gamma}$ in $\mathbf{C}P^2$ belong to
the union of the isotropic lines defined by the equations
$$
L_+=\{ x+iy=0\}, \qquad L_-=\{x-iy=0\}.
$$
\end{theorem}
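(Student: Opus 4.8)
The plan is to set aside the billiard dynamics entirely and read the conclusion off identity (\ref{eq2}) by algebro-geometric means on the projective curve $\tilde\Gamma$. First I pass from the arc $\Gamma_1$ to $\tilde\Gamma$. Raising (\ref{eq2}) to the $k$-th power clears the fractional power $g=g_1^{1/k}$ and, since $km=p$, produces the genuine polynomial identity
\[
g_1^{\,3}\,H(f)^{k}=c_1^{\,k}\,(x^2+y^2)^{3p-3k},
\]
valid at every point of the infinite arc $\Gamma_1\subset\Gamma$. Since $f$ is irreducible and is the minimal polynomial of $\tilde\Gamma$, any polynomial vanishing on $\Gamma_1$ is divisible by $f$, so this identity holds modulo $f$, i.e.\ as an identity of regular functions on $\tilde\Gamma$. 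A degree count using $k\deg f\le\deg F=2p$ together with $\deg f\ge 2$ (the polar dual of a convex curve is not a line) gives $m\ge1$, and $m=1$ forces $\deg f=2$, which is the first alternative of the theorem; hence from now on $3p-3k>0$.

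Next comes the link with the Hessian. Let $F^{*}$ homogenize $f$ and put $d=\deg f$. Euler's identities, after a short determinant manipulation, give the classical formula
\[
\mathcal H(F^{*})\big|_{Z=1}=-(d-1)^2\,H(f)+d(d-1)\,(f_{xx}f_{yy}-f_{xy}^{2})\,f,
\]
so that $\mathcal H(F^{*})\equiv-(d-1)^2H(f)\pmod f$, where $\mathcal H(F^{*})$ is the $3\times3$ Hessian determinant. Substituting into the identity above and homogenizing (the two sides now both have degree $6p-6k$) yields, on all of $\tilde\Gamma$,
\[
(-1)^{k}(d-1)^{-2k}\,\widehat{g_1}^{\,3}\,\mathcal H(F^{*})^{k}=c_1^{\,k}\,(X^2+Y^2)^{3p-3k},
\]
where $\widehat{g_1}$ is the homogenization of $g_1$. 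Now recall the classical fact that the divisor cut on $\tilde\Gamma$ by $\{\mathcal H(F^{*})=0\}$ is supported exactly on the singular and inflection points of $\tilde\Gamma$: at a singular point the Euler-differentiated identity puts $(X:Y:Z)$ in the kernel of the Hessian matrix, so $\mathcal H(F^{*})$ vanishes there, and at a smooth point $\mathcal H(F^{*})$ vanishes precisely at the inflections. Hence at every singular or inflection point $Q$ the left-hand side vanishes, so the right-hand side does too; since $c_1\ne0$ and $3p-3k>0$ this forces $X_Q^2+Y_Q^2=(X_Q+iY_Q)(X_Q-iY_Q)=0$, i.e.\ $Q\in L_+\cup L_-$. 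This is the ``moreover'' assertion.

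It remains to get the degree alternative. Suppose $d=\deg\tilde\Gamma\ge3$ and $\tilde\Gamma$ is smooth. By the Pl\"ucker formula $\tilde\Gamma$ has $3d(d-2)>0$ inflection points counted with multiplicity, all on $L_+\cup L_-$ by the previous step. But $L_+\cap L_-=\{[0:0:1]\}$ is the centre $O$, which lies inside $D$ and hence not on $\tilde\Gamma$, so $\tilde\Gamma\cap(L_+\cup L_-)$ has at most $2d$ points; and at a smooth point $Q$ the multiplicity of $Q$ in $\tilde\Gamma\cdot\{\mathcal H(F^{*})=0\}$ is $\mu_Q-2$, with $\mu_Q\le d$ the contact order of the flex tangent (B\'ezout). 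Therefore $3d(d-2)=\sum_Q(\mu_Q-2)\le(d-2)\cdot2d=2d(d-2)$, impossible for $d\ge3$; so $\tilde\Gamma$ must be singular unless $d=2$. As for the main obstacle: the two passages just described are bookkeeping, but making the Hessian step airtight is the delicate part, since one must establish the affine-to-projective Hessian identity and then homogenize correctly so that a single identity controls \emph{all} singular and inflection points, including those on the line at infinity, where $x^2+y^2$ is replaced by $X^2+Y^2$ whose only zeros there are the circular points $[1:\pm i:0]\in L_\pm$. One should also record the harmless remark that the zeros of $g_1$ on $\tilde\Gamma$, where the identity degenerates to $0=0$, likewise lie on $L_+\cup L_-$, so they do not affect the conclusion.
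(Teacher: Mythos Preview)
Your argument is correct and follows the route the survey indicates: the paper states only that Theorem~\ref{th2.4} is a consequence of identity~(\ref{eq2}) and refers to \cite{BM1} for details, and you have supplied a clean derivation along those lines by passing to the projective Hessian and using a Pl\"ucker count. The affine--projective Hessian identity you invoke is exactly right (one checks $Z^2\mathcal H(F^*)=(d-1)^2\bigl[-H(f)+\tfrac{d}{d-1}f(f_{xx}f_{yy}-f_{xy}^2)\bigr]$ via two Euler row/column reductions), and the degree match $3\deg g_1+k\cdot 3(d-2)=6p-6k$ shows that no stray power of $Z$ appears after homogenization, so the identity genuinely controls the points at infinity as well. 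Two small cosmetic points: the remark that the origin $[0{:}0{:}1]$ lies off $\tilde\Gamma$ is not needed for the count (B\'ezout already gives $|\tilde\Gamma\cap(L_+\cup L_-)|\le 2d$), and the ``degenerate $0=0$'' comment about zeros of $g_1$ is unnecessary since the identity still forces those points onto $L_\pm$.
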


\vskip6mm \begin{corollary} \label{cor1}  If the
Birkhoff billiard inside $\gamma$ is integrable  with an integral which is polynomial in
$v$, then $\tilde{\gamma}$ does not have two real algebraic
ovals having a common tangent line (see Fig.\ref{0}).
\end{corollary}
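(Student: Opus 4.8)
The plan is to argue by contradiction, using the polar duality of Section~2 to convert a common tangent of two ovals of $\tilde\gamma$ into a singular point of the dual curve $\tilde\Gamma$, and then to invoke Theorem~\ref{th2.4}. Suppose, contrary to the assertion, that $\tilde\gamma$ possesses two distinct real ovals $o_1,o_2$ together with a line $\ell$ tangent to $o_1$ at a point $L_1$ and to $o_2$ at a point $L_2$; since the ovals are compact, $\ell$ is a genuine real affine line and $L_1\neq L_2$. Under polar duality with respect to the fixed point $O$, the line $\ell$ corresponds to a real point $P$ of $\Gamma$, hence of $\tilde\Gamma$; moreover $P\neq O$, since the polar dual of a line always lies at positive distance $1/\operatorname{dist}(O,\ell)$ from $O$ (or at infinity, when $\ell$ passes through $O$).

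The key geometric step is the classical Pl\"ucker fact that, under projective duality, a bitangent line of a plane curve is dual to a singular point --- a node in the generic case --- of the dual curve. Concretely, as $L$ runs over $o_i$ near $L_i$ the tangent lines of $\tilde\gamma$ at $L$ are dual to points of $\tilde\Gamma$ forming a local analytic branch through $P$, and the tangent line of that branch at $P$ is the polar dual of the point $L_i$; since $L_1\neq L_2$, the two branches of $\tilde\Gamma$ at $P$ have distinct tangents, so $P$ is a nodal singular point of $\tilde\Gamma$. Even if the contact of $\ell$ with some $o_i$ is of higher order, or $\tilde\gamma$ has an inflection at one of the $L_i$, the corresponding branch may itself become singular (a cusp), but $P$ remains a singular point of $\tilde\Gamma$, which is all that is needed.

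Now Theorem~\ref{th2.4} applies, since by hypothesis the Birkhoff billiard inside $\gamma$ admits a non-constant polynomial integral: every singular point of $\tilde\Gamma$ in $\mathbf{C}P^2$ lies on $L_+\cup L_-$, where $L_\pm=\{x\pm iy=0\}$. But the real locus of $L_+\cup L_-$ consists of the single point $O=[0:0:1]$ --- the only real affine solution of $x\pm iy=0$ is $x=y=0$, and the two points at infinity on $L_\pm$ are the non-real circular points $[1:\pm i:0]$. Since $P$ is a real point of $\tilde\Gamma$ lying on $L_+\cup L_-$, it follows that $P=O$, contradicting $P\neq O$. This proves the corollary; the possibility that $\tilde\Gamma$ has degree $2$ is harmless, for then $\gamma$ is an ellipse and $\tilde\gamma$ is a smooth conic, which has no bitangent at all.

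The step I expect to demand the most care is the second one: making the implication ``common tangent of two real ovals $\Rightarrow$ singular point of the complex projective dual curve'' fully rigorous. One must check that the polar dual of a real tangent arc of $\tilde\gamma$ is genuinely an analytic branch of $\tilde\Gamma$ through $P$ (using smoothness of $\tilde\gamma$ near $L_1,L_2$), dispose of the degenerate configurations (higher-order contact, inflections, $\ell$ through $O$) so that in each case $P$ is a point of $\tilde\Gamma$ that is real and distinct from $O$, and note that polar duality with respect to $O$ has the same bitangent/node behaviour as standard projective duality, since the two differ only by a fixed projective transformation.
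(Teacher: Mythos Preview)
Your argument is correct and is precisely the natural filling-in of the reasoning the paper leaves implicit: the paper states Corollary~\ref{cor1} without proof, deriving it directly from Theorem~\ref{th2.4} via the classical bitangent--node correspondence under duality, and then only illustrates it with the hyperelliptic example. Your identification of the sole real point of $L_+\cup L_-$ with the duality center $O$ and the observation that the dual $P$ of an affine line is never $O$ close the argument exactly as intended.
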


Let us give the simplest example to Corollary \ref{cor1}. Consider the real algebraic curve
$$
 y^2=F(x)=(x-x_1)(x-x_2)(x-x_3)(x-x_4)f(x),\quad
  x_1<x_2<x_3<x_4,
$$
where $f(x)$ is a real polynomial such that $F(x)> 0$ for $x\in
(x_1,x_2)$ and $x\in (x_3,x_4).$
Then the Birkhoff billiard inside the real analytic oval
$$
 \gamma=\{(x,\pm\sqrt{F(x)}),\ x\in[x_1,x_2]\}
$$
does not admit polynomial integral, since the algebraic curve has
another analytic oval $\gamma_1$ as in Fig.\ref{0}.
$$
 \gamma_1=\{(x,\pm\sqrt{F(x)}),\ x\in[x_3,x_4]\}.
$$
\begin{figure}[h]
	\centering
	\includegraphics[width=0.5\linewidth]{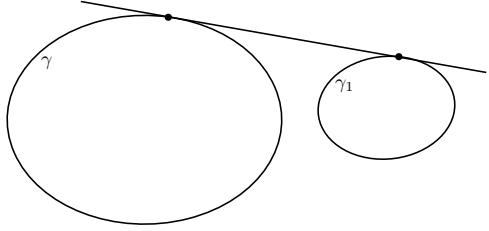}
	\caption{Non-integrable Birkhoff billiard inside $\gamma$.}
	\label{0}
\end{figure}

\vskip3mm \begin{corollary} \label{cor2}   Assume that
$\tilde{\Gamma}$ is a non-singular curve of degree $>2$ in
$\mathbf{C}P^2$ and has a  smooth real oval $\Gamma$ (for example,
$\tilde{\Gamma}$ is a nonsingular cubic). Then the dual curve
$\gamma$ is also an oval and Birkhoff billiard inside $\gamma$ is
not integrable by Theorem \ref{th2.4}.
\end{corollary}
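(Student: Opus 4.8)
The plan is to verify that $\gamma$ is a bona fide smooth strictly convex oval whose projective dual is the given $\tilde\Gamma$, and then read off the non-integrability directly from Theorem~\ref{th2.4}. First I would invoke the biduality recorded above (``dual to $\Gamma$ is $\gamma$ again''): fixing the center $O$ inside the convex region bounded by $\Gamma$, the polar dual $\gamma$ of $\Gamma$ is well defined, and in turn $\Gamma$ is the polar dual of $\gamma$. Hence $\tilde\Gamma$ is exactly the irreducible curve in $\mathbf{C}P^2$ associated with the polar dual of $\gamma$, i.e.\ the object appearing in the conclusion of Theorem~\ref{th2.4}. (Irreducibility of $\tilde\Gamma$ is automatic, since it is nonsingular.)

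Next I would check that $\gamma$ is smooth and strictly convex. A point of the polar dual $\gamma$ is non-smooth only over a point of $\Gamma$ of vanishing curvature (an inflection or flat point, i.e.\ where the tangent line has contact order $\ge 3$), which produces a cusp of $\gamma$; and $\gamma$ has a self-intersection only over a line tangent to $\Gamma$ at two distinct points, which produces a node of $\gamma$. So it suffices to show that along $\Gamma$ every tangent line has contact order exactly $2$ at the point of tangency and is not tangent at a second point. For a nonsingular cubic this is classical: by B\'ezout a tangent line meets $\tilde\Gamma$ in three points with multiplicity, while the three real inflection points of a smooth real cubic are collinear and lie on the odd (non-oval) branch; thus along the oval $\Gamma$ no tangent line has higher contact and none is bitangent. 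For a general nonsingular $\tilde\Gamma$ of degree $>2$ the same conclusion holds once $\Gamma$ is taken to be a convex, hence inflection-free, oval --- which is the meaning of ``smooth real oval'' here. Therefore $\gamma$ is a smooth strictly convex closed curve, an \emph{oval}.

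Finally, by Theorem~\ref{th2.4}, if the Birkhoff billiard inside $\gamma$ admitted a non-constant polynomial integral on $\{|v|=1\}$, then the irreducible curve in $\mathbf{C}P^2$ associated with the polar dual of $\gamma$ --- which by the first step is precisely $\tilde\Gamma$ --- would have degree $2$ or would possess singular points. Both options are excluded by hypothesis, so the billiard inside $\gamma$ is not integrable. The family of nonsingular cubics with two real components provides the advertised explicit examples.

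The step I expect to be the main obstacle is the middle one: carefully matching the local analytic types of $\Gamma$ (inflection/flat points, multiple tangents) with the singularities of its polar dual $\gamma$, and checking that one can choose an affine chart and a center $O$ keeping all of $\Gamma$ and $\gamma$ in the finite part of the plane and strictly convex, so that Theorem~\ref{th2.4} applies verbatim. Once $\gamma$ is known to be a genuine oval dual to $\tilde\Gamma$, the conclusion is immediate.
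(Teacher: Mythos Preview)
Your proposal is correct and follows exactly the route the paper intends: the corollary in the paper is stated without a separate proof, as an immediate consequence of Theorem~\ref{th2.4} via biduality, and you have simply spelled out the details (biduality identifies $\tilde\Gamma$ with the dual of $\gamma$, the hypotheses exclude both alternatives of the theorem). Your middle step --- checking that $\gamma$ is a genuine smooth convex oval by ruling out inflections and bitangents on $\Gamma$ --- is precisely the verification the paper suppresses, and your treatment of the cubic case (inflections on the odd branch, no bitangents by B\'ezout) is accurate; your caveat that for higher degree one must read ``smooth real oval'' as \emph{convex} oval is the right reading in this context.
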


Additional application of angular billiard yields the following result.

\vskip3mm

\begin{theorem} (\cite{BM})
  For any smooth closed convex curve $\gamma$ different from ellipse
Birkhoff billiard inside $\gamma$ does not admit polynomial integral
of degree
 4.
\end{theorem}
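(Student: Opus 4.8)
\section*{Proof proposal}

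The plan is to pass to the polar dual picture and the angular billiard, exactly as in the results quoted above, and then to eliminate every remaining possibility by a degree count together with an intersection-theoretic argument on the dual curve. So assume, for contradiction, that a smooth closed convex curve $\gamma$ which is \emph{not} an ellipse carries a polynomial integral $\Phi$ of degree $4$. Normalizing as in \cite{bolotin}, take $\Phi=\Phi(\sigma,v_1,v_2)$ homogeneous of degree $n=4$ and vanishing on the tangent vectors of $\gamma$, and put $p=2$. By the construction recalled above, the angular billiard for the polar dual curve $\Gamma$ is integrable with integral $G=F/(x^2+y^2)^2$, where $F=\Phi(1,y,-x)$ has degree $4$ and $F|_\Gamma=0$; let $f$ be the minimal defining polynomial of $\Gamma$, write $F=f^{k}g_1$ with $g_1$ not divisible by $f$, and set $\tilde F=fg$, $g=g_1^{1/k}$, $m=p/k$. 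By Bolotin's Theorem and Theorem~\ref{th2.4}, since $\gamma$ is not an ellipse the projective closure $\tilde\Gamma$ is irreducible, singular, of degree $d:=\deg f$ with $3\le d\le\deg F=4$, and all its singular and inflection points lie on $L_+\cup L_-$. Since $kd\le 4$ and $d\ge 3$ we must have $k=1$, hence $g=g_1$, $m=2$, and $\deg g_1=4-d$.

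With these values, the key identity (\ref{eq2}) reads, on the real-analytic arc $\Gamma_1\subset\Gamma$,
\begin{equation*}
g_1^{3}(x,y)\,H(f)(x,y)=c_1\,(x^2+y^2)^{3},\qquad c_1\neq 0 ,
\end{equation*}
with $H$ the affine Hessian. Because $\tilde\Gamma$ is irreducible and $\Gamma_1$ is Zariski dense on it, the two sides, being polynomials, agree identically along $\tilde\Gamma$; equivalently $f\mid g_1^{3}H(f)-c_1(x^2+y^2)^{3}$, so on $\tilde\Gamma$ the rational function $g_1^{3}H(f)/(x^2+y^2)^{3}$ equals the nonzero constant $c_1$. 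Comparing divisors on the normalization of $\tilde\Gamma$ then forces every zero of $H(f)$ along $\tilde\Gamma$, in particular every inflection point, to lie on $\{g_1=0\}\cup L_+\cup L_-$, with prescribed multiplicities --- provided one also keeps track of the points of $\tilde\Gamma$ at infinity, since $H(f)$ and $(x^2+y^2)^3$ are forms of different degrees.

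Now I would treat the two values of $d$ separately. If $d=3$, a nonsingular cubic is already excluded by Theorem~\ref{th2.4} (it forces degree $2$ or a singular curve), and a singular irreducible cubic has a unique singular point, which --- the curve being defined over $\mathbf R$ --- is a real point; the real locus of such a cubic is therefore a nodal or cuspidal unbounded curve, or, for an acnode, an isolated point together with an unbounded branch, and in no case does it contain a bounded smooth strictly convex oval, contradicting the fact that $\Gamma$, the polar dual of the smooth strictly convex $\gamma$, is exactly such an oval lying on $\tilde\Gamma$. Hence $d=3$ does not occur. If $d=4$, then $g_1$ is a nonzero constant, so every zero of $H(f)$ along $\tilde\Gamma$ lies on $L_+\cup L_-$. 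By Plücker's formulas an irreducible quartic with $\delta$ nodes and $\kappa$ cusps ($1\le\delta+\kappa\le 3$, the lower bound by Theorem~\ref{th2.4}) has exactly $\iota=24-6\delta-8\kappa$ inflection points; since $\tilde\Gamma\cap(L_+\cup L_-)$ is a set of at most $4+4=8$ points, the $\iota$ inflections together with the $\delta+\kappa$ singular points must all fit inside it, which is impossible as soon as $5\delta+7\kappa<16$ --- this rules out every $(\delta,\kappa)$ except $(2,1),(1,2),(0,3)$. For these I would argue more finely: a singular point on $L_+$ or $L_-$ absorbs intersection multiplicity at least $2$ there, leaving too little room on the two lines for the remaining inflections, and the sharp divisor identity above eliminates the degenerate sub-configurations (the tricuspidal quartic, which has no inflections at all; a singular point placed at the origin $L_+\cap L_-$; an inflectional tangent coinciding with $L_+$ or $L_-$) by matching the vanishing order of $H(f)$ along $\tilde\Gamma$ point by point, including at infinity and at the circular points through which $L_\pm$ pass. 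Hence $d=4$ does not occur either, so $\gamma$ must be an ellipse.

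The hard part is precisely this last step. The crude count of distinct points disposes of most configurations at once, but the quartics with three singular points and few inflection points --- above all the tricuspidal one, for which Theorem~\ref{th2.4} imposes no condition on inflections at all --- survive it, and to eliminate them one is forced to carry out the full intersection-multiplicity bookkeeping along $\tilde\Gamma$ at the cusps, at the points at infinity, and on the isotropic lines, using the identity (\ref{eq2}) in its sharp divisor form rather than merely its qualitative consequence that singular and inflection points are isotropic.
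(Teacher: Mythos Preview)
The survey does not actually prove this theorem; it cites \cite{BM} and only records that the argument goes through the angular billiard. So your route --- dualising, writing $F$ of degree $4$, and splitting on $d=\deg f\in\{3,4\}$ --- is the intended one, and the $d=3$ step is correct. It becomes a single line once you use Theorem~\ref{th2.4} rather than bypass it: the unique singular point of an irreducible cubic is real, hence lies at the origin $L_+\cap L_-$; then every line through the origin meets $\{f=0\}\setminus\{0\}$ in exactly one further point, which is incompatible with $\Gamma$ being a convex oval surrounding the origin.

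For $d=4$ there is a genuine gap, in two layers. First, your Pl\"ucker bookkeeping tacitly assumes that the only singularities of $\tilde\Gamma$ are ordinary nodes and cusps and that every flex is simple. An irreducible quartic can instead carry a tacnode, a ramphoid cusp, an ordinary triple point, or hyperflexes; none of these is covered by your $(\delta,\kappa)$ enumeration, and each changes both the flex count and the intersection multiplicities with $L_\pm$. Second --- and you say so yourself --- the surviving cases $(2,1),(1,2),(0,3)$ are only sketched. For the tricuspidal quartic $\iota=0$, so the location of inflections gives nothing, and the divisor form of~(\ref{eq2}) is the only tool left; you have not shown it suffices, and it is not obvious that it does. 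The paper's remark that ``we do not know how to prove this result directly without passing to angular billiard'' is a hint: the argument in \cite{BM} uses the full identity~(\ref{eq1}), not merely its first nontrivial consequence~(\ref{eq2}). Higher coefficients of the $\varepsilon$-expansion of~(\ref{eq1}) produce further polynomial relations along $\Gamma$, and those are what close the case analysis. The missing ingredient, then, is to go back to~(\ref{eq1}) and extract those extra constraints rather than to try to finish from~(\ref{eq2}) and Pl\"ucker alone.
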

 \vskip3mm

We do not know how to prove this result directly without passing to angular billiard.

The crucial step in the proof of the algebraic Birkhoff conjecture was done by A. Glutsyuk \cite{g1}. The brief scheme of his proof is the following.

Every local branch of a germ of $\tilde{\Gamma}$ at a point $C\in L_{\pm}\cap \tilde{\Gamma}$
in adapted coordinates can be parametrized as follows
\begin{equation}\label{branch}
 t\rightarrow (t^q,ct^p(1+o(1)))
\end{equation}
as $t\rightarrow 0$, $q,p\in {\mathbf N}$,  $1\leq q<p, \ c\ne 0$. We refer to $\frac{p}{q}$ as {\it Puiseux} index.

The branch is called quadratic, if $\frac{p}{q}=2$, and sub-quadratic if $\frac{p}{q}<2$.
From the local behavior of the involution on the tangent lines to $\tilde\Gamma$ in the neighborhood of singular and inflection points  and from the identity (\ref{eq2}) one concludes:

\vskip3mm

\begin{theorem}\label{th2.8} (\cite{g1})
The curve $\tilde\Gamma$ satisfies the following conditions:

(i) Every branch transversal to $L_{\pm}$ at $0=L_+\cap L_-$ is quadratic.

(ii-a) Every branch tangent to $L_-$ or $L_+$ at $C\ne 0$ is quadratic.

(ii-b) Every branch transversal to $L_{\pm}$ at $C\ne 0$ is regular and quadratic.
\end{theorem}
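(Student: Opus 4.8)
The two ingredients are the Hessian identity $(\ref{eq2})$, which by analytic continuation holds along the whole complexified curve $\tilde\Gamma$, and the projective involution on tangent lines furnished by $(\ref{eq1})$. I would fix a point $C\in L_\pm\cap\tilde\Gamma$ and a local branch $B$ of $\tilde\Gamma$ at $C$, with reduced Puiseux parametrization $t\mapsto(t^q,ct^p(1+o(1)))$, $\gcd(p,q)=1$, $1\le q<p$, and translate the two global statements into numerical constraints forcing $p/q=2$ (and $q=1$ in cases (i) and (ii-b)). Throughout, $L_\pm$ are tracked through $x^2+y^2=(x+iy)(x-iy)$, so that the right-hand side of $(\ref{eq2})$ is a pure power of the isotropic lines restricted to $\tilde\Gamma$.

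\emph{Step 1: orders from the Hessian identity.} Restrict both sides of $(\ref{eq2})$ to $B$ and compare orders of vanishing at $C$. One computes $\mathrm{ord}_C(H(f)|_B)$ purely in terms of $(p,q)$: it measures the contact of $B$ with its tangent line (the flex/singularity order), it equals $0$ precisely when $q=1$ and $p=2$, and it grows strictly otherwise. On the right, $\mathrm{ord}_C\big((x^2+y^2)^{3m-3}|_B\big)=(3m-3)\big(\mathrm{ord}_C((x+iy)|_B)+\mathrm{ord}_C((x-iy)|_B)\big)$, i.e. $(3m-3)$ times the total intersection multiplicity of $B$ with the isotropic lines at $C$; this is where "transversal versus tangent" and "$C=0$ versus $C\ne0$" enter. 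Equating these, and inserting a bound on $\mathrm{ord}_C(g|_B)$, pins $p/q$. To bound the spurious factor $g=g_1^{1/k}$ one uses that $g_1\not\equiv0$ on $\Gamma$, so $\{g=0\}\cap\tilde\Gamma$ is finite and, by $(\ref{eq2})$ (already by Theorem \ref{th2.4}), supported on $L_+\cup L_-$; a Bézout/degree count against $\deg f$ and $\deg\tilde F$ then caps $\mathrm{ord}_C(g|_B)$. This is the bookkeeping preventing $g$ from absorbing the Hessian's vanishing and disguising a higher-order branch as quadratic.

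\emph{Step 2: the involution and its degenerate limit.} For a real point $P$ near $C$, parametrize the tangent line $T_P\tilde\Gamma$ by $\varepsilon\mapsto(x+\varepsilon\tilde F_y,\,y-\varepsilon\tilde F_x)$; then $(\ref{eq1})$ gives a M\"obius involution $\tau_P\colon\varepsilon\mapsto\mu$ fixing $\varepsilon=0$ (the point $P$) and $\varepsilon_*=-(x^2+y^2)/(x\tilde F_y-y\tilde F_x)$, and $\tau_P$ preserves $T_P\tilde\Gamma\cap\{\tilde F=0\}\supseteq T_P\tilde\Gamma\cap\tilde\Gamma$. Letting $P\to C$ along $B$, the tangent lines converge to $\ell_C=T_CB$, which meets $\tilde\Gamma$ at $C$ with multiplicity $p$ coming from $B$. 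Since $x^2+y^2$ vanishes on $B$ at $C$, the fixed point $\varepsilon_*$ runs to $0$, so the naive limit of $\tau_P$ is parabolic (hence trivial); one rescales $\varepsilon$ by the appropriate power of a local coordinate on $B$ to extract a nontrivial limiting involution $\tau_C$ on $\ell_C$. The constraint is that $\tau_C$ must be compatible with the intersection divisor of $\ell_C$ with $\tilde\Gamma$ near $C$: the over-contact of a non-quadratic branch produces, after rescaling, a divisor on $\ell_C$ admitting no involution unless $p/q=2$, and in the transversal cases (i), (ii-b) no involution unless also $q=1$. Combining this with the order count of Step 1 closes all three cases.

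\emph{Main obstacle.} The delicate point is the rescaling in Step 2: identifying the correct normalization of the parameter along the family of tangent lines so that the degenerating involution has a nontrivial limit, and then tracking where the companion intersection points of $T_P\tilde\Gamma$ with $\tilde\Gamma$ migrate in that limit. This is genuinely hard when $B$ is tangent to an isotropic line (case (ii-a)) or when $C$ is the double isotropic point $0=L_+\cap L_-$ (case (i)), since several vanishings pile up at once and must be disentangled before the resulting inequality can be matched against the Hessian order relation. A secondary subtlety is the analytic continuation of $(\ref{eq1})$, valid a priori only along the real arc $\Gamma_1$, to a full neighborhood of the complex point $C$ in $\mathbf{C}P^2$.
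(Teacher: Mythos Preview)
The paper does not actually prove Theorem~\ref{th2.8}: this is a survey, and the theorem is quoted from Glutsyuk~\cite{g1} with only the one-sentence indication that it follows ``from the local behavior of the involution on the tangent lines to $\tilde\Gamma$ in the neighborhood of singular and inflection points and from the identity~(\ref{eq2}).'' Your two-ingredient outline (Hessian identity plus tangent-line involution) matches this hint exactly, so at the level of strategy there is nothing to compare against here.

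That said, the proposal has a real gap. In Step~1 the B\'ezout argument you invoke to control $\mathrm{ord}_C(g|_B)$ yields only a global bound---the \emph{sum} over all intersection points of $\{g_1=0\}$ with $\tilde\Gamma$ is at most $\deg g_1\cdot\deg f$---and says nothing sharp about any individual $C$. With $\mathrm{ord}_C(g|_B)$ essentially free, the order balance coming from~(\ref{eq2}) is one relation in several unknowns and cannot by itself force $p/q=2$; moreover your claim that $\mathrm{ord}_C(H(f)|_B)$ depends ``purely on $(p,q)$'' ignores contributions from other local branches of $\tilde\Gamma$ through the same point $C$. The Hessian identity is what confines singular and inflection points to $L_\pm$ (this is Theorem~\ref{th2.4}), but the quadraticity conclusions are really driven by the involution. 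Your Step~2 is therefore the decisive part, and you correctly flag the rescaling of the degenerating family $\tau_P$ as the main obstacle---but you do not carry it out. Without that analysis (how the limiting involution pairs the nearby points of $T_P\tilde\Gamma\cap\tilde\Gamma$ and why this forces $p/q=2$, together with $q=1$ in the transversal cases), the proposal remains a correct plan rather than a proof.
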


The last step in the proof of the algebraic Birkhoff conjecture is the following general statement (see also \cite{g2}):

\begin{theorem}\label{th2.10} (\cite{g1})
Let $\gamma\in{\mathbf C}P^2$ be irreducible algebraic curve different from a line, such that
all its singular and inflection points belong to $L_{\pm}.$
If $\gamma$ satisfies the conditions (i), (ii-a), (ii-b) of Theorem \ref{th2.8}
then $\gamma$ is a conic.
\end{theorem}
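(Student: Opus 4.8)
The statement asserts that an irreducible plane curve $\gamma\subset{\mathbf C}P^{2}$, not a line, all of whose singular and inflection points lie on $L_{+}\cup L_{-}$ and which satisfies (i), (ii-a), (ii-b), has degree at most $2$; being irreducible and not a line it is then a smooth conic. So it suffices to assume $d:=\deg\gamma\ge 3$ and derive a contradiction. The plan is to play two Pl\"ucker--Riemann--Hurwitz identities --- one for $\gamma$, one for its dual $\gamma^{\vee}$ --- against the genus formula and against B\'ezout's theorem for the two lines $L_{\pm}$.

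First I would translate the hypotheses into statements about branches. All singular and inflection points of $\gamma$ lie on $L_{+}\cup L_{-}$, and by Theorem~\ref{th2.8} every branch of $\gamma$ centered on $L_{+}\cup L_{-}$ is quadratic, i.e.\ has Puiseux index exactly $2$. At a smooth point this says the tangent line has contact exactly $2$, so $\gamma$ has no genuine flexes at all; consequently the only singular branches of $\gamma$ are quadratic ones, of the form $t\mapsto(t^{q},ct^{2q}(1+o(1)))$ in adapted coordinates, with $q\ge 2$, $c\ne 0$. Three elementary facts about such a branch $\beta$ of multiplicity $q$ will be used: (a) $\delta(\beta)\ge q(q-1)$ (the minimum $q(q-1)$ being realized by $w^{q}=x^{2q+1}$); (b) the germ of $\gamma^{\vee}$ traced out by the tangent lines along $\beta$ is again quadratic of multiplicity $q$; (c) if the center of $\beta$ is a point $C\ne 0=L_{+}\cap L_{-}$, then by (ii-b) $\beta$ cannot be transversal to $L_{\pm}$, hence is tangent to the line through $C$ and meets it with local intersection number $2q$.

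Let $\nu\colon X\to\gamma$ be the normalization, $X$ smooth of genus $g$. Projection of $\gamma$ from a generic point $p\notin\gamma$ gives $\pi_{p}\colon X\to{\mathbf C}P^{1}$ of degree $d$ whose only ramification, for generic $p$, is simple at the $d^{\vee}:=\mathrm{class}(\gamma)$ points where the branch tangent passes through $p$ and of index $q_{i}$ at each singular branch; Riemann--Hurwitz then gives $d^{\vee}=2d+2g-2-\sigma$ with $\sigma:=\sum_{\text{sing.\ branches}}(q_{i}-1)$. Since $\gamma$ has no genuine flexes, the singular branches of $\gamma^{\vee}$ arise only from those of $\gamma$ and, by (b), contribute the same $\sigma$; as the Gauss map realizes $X$ as the normalization of $\gamma^{\vee}$ as well (same genus $g$, class $d$ by biduality), the identical computation applied to $\gamma^{\vee}$ gives $d=2d^{\vee}+2g-2-\sigma$. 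Eliminating $d^{\vee}$ yields $d=2-2g+\sigma$, and inserting the genus formula $g=\binom{d-1}{2}-\sum_{P}\delta_{P}$ gives
\[
 2\sum_{P}\delta_{P}+\sigma=d(d-2).
\]
Since $g\ge 0$ forces $\sum_{P}\delta_{P}\le\binom{d-1}{2}$, this implies $\sigma\ge d-2>0$: there are singular branches, and $\sum(q_{i}-1)\ge d-2$.

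Finally, by (c) each singular branch contributes at least $q_{i}$ to $\deg(\gamma\cdot L_{+})+\deg(\gamma\cdot L_{-})$ if centered at $0$ and at least $2q_{i}$ otherwise, so $2\sum q_{i}\le\deg(\gamma\cdot L_{+})+\deg(\gamma\cdot L_{-})=2d$; hence $\sum q_{i}\le d$, which together with $\sum(q_{i}-1)\ge d-2$ leaves at most two singular branches. A short case analysis then closes the argument: a single branch of multiplicity $q\ge d-1$ violates B\'ezout against one of the lines, or (when $q=d$) irreducibility, or the bound $\delta(\beta)\ge q(q-1)>\binom{d-1}{2}$; two branches force $\sum q_{i}=d$ with all B\'ezout inequalities becoming equalities, pinning the configuration down (up to swapping $L_{\pm}$) to ``both centered at $0$ and transversal to $L_{\pm}$'' or ``centered at distinct points with $q_{1}=q_{2}=d/2$'', and in either case fact (a) --- together with the mutual intersection number $q_{1}q_{2}$ at $0$ in the first case --- forces $\sum_{P}\delta_{P}>\binom{d-1}{2}$, i.e.\ $g<0$. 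The step I expect to be genuinely delicate is not this bookkeeping but the behavior at $0=L_{+}\cap L_{-}$: condition (i) constrains only the branches \emph{transversal} to $L_{\pm}$ there, so branches at $0$ tangent to $L_{+}$ or $L_{-}$, which a priori might be sub-quadratic, have to be excluded or controlled by the finer local structure of the tangential involution near $0$ underlying Theorem~\ref{th2.8} --- equivalently by the identity (\ref{eq2}). (Alternatively the whole count can be run through the Hessian curve $\mathrm{Hess}(f)$ of degree $3(d-2)$, whose $3d(d-2)$ intersection points with $\gamma$ all lie on $L_{+}\cup L_{-}$ with local multiplicities bounded by the quadratic conditions; comparing with $3d(d-2)$ again yields $d\le 2$.)
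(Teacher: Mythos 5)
The paper itself contains no proof of Theorem \ref{th2.10}: it is quoted from Glutsyuk \cite{g1}, and the text only notes that the proof rests on the joint results of Glutsyuk and Shustin \cite{g2}. So the only comparison possible is with that cited work, and there your Pl\"ucker/Riemann--Hurwitz bookkeeping (class formula for $\gamma$ and for $\gamma^{\vee}$, the resulting identity $2\sum_{P}\delta_{P}+\sigma=d(d-2)$, B\'ezout against $L_{\pm}$, the bound $\delta\ge q(q-1)$ for quadratic branches) is indeed part of the toolbox; but as a proof of the statement your write-up has a genuine gap, and it sits exactly at the hard core of the theorem.

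The gap is the one you flag yourself and then do not close: branches of $\gamma$ centered at $0=L_{+}\cap L_{-}$ and \emph{tangent} to $L_{+}$ or $L_{-}$ are constrained by none of (i), (ii-a), (ii-b) (condition (i) covers only transversal branches at $0$, conditions (ii-a), (ii-b) only points $C\ne 0$), and they are compatible with ``all singular and inflection points lie on $L_{\pm}$'' since $0$ lies on both lines. Hence your standing premise --- no flexes and every singular branch quadratic --- is unproved precisely where it can fail, and every subsequent step leans on it: a single such branch with class different from its multiplicity makes $\sigma^{\vee}\ne\sigma$, so the elimination giving $d^{\vee}=d$ and the identity $2\sum\delta+\sigma=d(d-2)$ collapse; a sub-quadratic branch there invalidates the bound $\delta\ge q(q-1)$ and the contact-order $2q$ used in the B\'ezout count. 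Your proposed repairs are not admissible here: the ``tangential involution'' and identity (\ref{eq2}) belong to the billiard setting and are not hypotheses of Theorem \ref{th2.10}, which is a standalone statement about algebraic curves; and the alternative Hessian-curve count has the same hole, since tangent branches at $0$ can absorb arbitrarily large local intersection with the Hessian. Ruling out exactly these degenerate branches at the intersection point of the isotropic lines (together with the case analysis you only sketch in one sentence) is the substantial content of \cite{g1}, \cite{g2}; what you have written is a correct outline of the comparatively easy global bookkeeping, with the essential difficulty left open.
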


The proof ot Theorem \ref{th2.10} is based on joint results of A. Glutsyuk and E.~Shustin \cite{g2}.

\section{\bf Billiards on constant curvature surfaces}

In this section we discuss algebraic Birkhoff conjecture for billiards on a surface $\Sigma$ of constant curvature $K=\pm 1$. Let $\Omega$ be a convex domain $\Omega\subset\Sigma$ with smooth boundary $\gamma=\partial\Omega$. Inside $\Omega$  particle moves along  geodesics. Reaching the boundary the  particle reflects according to the geometric optics low with respect to the Riemannian metric on $\Sigma$. We realize $\Sigma$ as the unit sphere in euclidean space ${\mathbf R}^3$  $$x_1^2+x_2^2+x_3^2=1,$$ for the case $K=1$, and as the upper sheet of the hyperboloid
$$x_1^2+x_2^2-x_3^2=-1$$ in ${\mathbf R}^3$ with the metric $ds^2=dx_1^2+dx_2^2-dx_3^2$, for $K=-1$.
We denote by $\hat{\gamma}$ the image of $\gamma$ under the projection ${\mathbf R}^3\backslash 0\rightarrow {\mathbf R}P^2$. There is an example of integrable billiard on $\Sigma$. Let $\Omega\subset\Sigma$ be the domain with the boundary $\gamma=\Sigma\cap\{ax_1^2+bx_2^2+cx_3^2=0\}$. Then Birkhoff billiard in $\Omega$ admits a first integral which is quadratic in velocities (\cite{bolotin2}, see also \cite{V}). We denote by $\Lambda\subset{\mathbf C}P^2$ the absolute, defined by the equation
$$
 \Lambda=\{(x_1:x_2:x_3):x_1^2+x_2^2\pm x_3^2=0\},
$$
where "+" is taken for $K=1$ and "-" for $K=-1$. S. Bolotin proved the following theorem.

\vskip3mm

\begin{theorem} (\cite{bolotin2})
Let $\Omega\subset\Sigma$ be a convex domain with a smooth boundary $\gamma$. Suppose that Birkhoff billiard inside $\Omega$ admits a non-constant polynomial integral on the energy level $\{|v|=1\}.$ It then follows that $\hat{\gamma}$ is necessarily an algebraic curve. Moreover, let $\tilde{\gamma}$ be corresponding to $\hat{\gamma}$ the irreducible curve in ${\mathbf C}P^2$. If $\tilde{\gamma}$ is a smooth curve, such that at least one intersection point of $\tilde{\gamma}$ with the absolute $\Lambda$ is transversal, then $\tilde{\gamma}$ is of degree 2.
\end{theorem}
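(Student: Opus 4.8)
\emph{The plan} is to pass from the billiard on $\Sigma$ to a projective billiard on $\mathbf{R}P^2$ by central projection, then to rerun the angular-billiard analysis of Section~2 in this model with the absolute $\Lambda$ in the role played there by the isotropic lines $L_\pm$, and finally to conclude by a Pl\"ucker count on the smooth curve $\tilde\gamma$. \emph{Step 1 (projective model; algebraicity of $\hat\gamma$).} Realize $\Sigma$ as the quadric in $\mathbf{R}^3$ of Section~3 and let $Q$ be the quadratic form cutting out $\Lambda$. Geodesics of $\Sigma$ are its intersections with planes through the origin, so the central projection $\pi\colon\mathbf{R}^3\setminus 0\to\mathbf{R}P^2$ carries geodesics to lines and the billiard in $\Omega$ to a projective billiard in $\pi(\Omega)$ whose reflection law is the Cayley--Klein reflection about the normal measured with respect to $\Lambda$. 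Extending a polynomial integral $F(X,v)$ homogeneously in $v$, the reflection law gives, for each $X\in\gamma$, the polynomial identity $F(X,v)=F(X,R_{n(X)}v)$ in $v$, where $R_{n(X)}$ is the $\Lambda$-reflection about the normal at $X$; differentiating this along $\gamma$ constrains the normal field and forces $\hat\gamma$ to lie on an algebraic curve, exactly as in the algebraicity step of \cite{bolotin},\cite{bolotin2}. Let $\tilde\gamma\subset\mathbf{C}P^2$ be the irreducible curve containing $\hat\gamma$, put $d=\deg\tilde\gamma$, and let $f$ be its minimal defining polynomial.

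\emph{Step 2 (involution on tangent lines; Hessian identity).} As in the angular-billiard construction of Section~2, invariance of $F$ should yield on each complex tangent line $\ell$ to $\tilde\gamma$ a projective involution $\iota_\ell$ preserving the finite set $\ell\cap\tilde\gamma$, whose distinguished data are now the two points $\ell\cap\Lambda$ --- the analogue of the isotropic points $\ell\cap L_\pm$. Running the computation that produced (\ref{eq2}) through this dictionary, with $Q$ in place of $x^2+y^2$, should give, first along a suitable real arc and then by analytic continuation on all of $\tilde\gamma$, an identity of the shape
\[
  g^3\,H(f)=c\,Q^{\,k},
\]
where $H(f)$ is the projective Hessian of $f$, $g$ an auxiliary factor, and $c$ a constant. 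The transversality hypothesis is used precisely here: a point of $\tilde\gamma\cap\Lambda$ at which the intersection is transversal fixes the local normal form of $\iota_\ell$ and guarantees $c\neq0$, so that $\tilde\gamma$ is not a line and the identity is genuinely constraining. Since $c\neq0$, the Hessian $H(f)$ vanishes on $\tilde\gamma$ only where $Q$ does; that is, every inflection point of $\tilde\gamma$ lies on the absolute $\Lambda$.

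\emph{Step 3 (Pl\"ucker count).} Suppose $d\geq3$. As $\tilde\gamma$ is smooth, its Hessian curve has degree $3(d-2)$ and meets $\tilde\gamma$ in exactly $3d(d-2)$ points counted with multiplicity (B\'ezout), so $\tilde\gamma$ has $3d(d-2)$ inflection points with multiplicity. By Step~2 all of them lie on $\tilde\gamma\cap\Lambda$, a set of at most $2d$ points since $\deg\Lambda=2$. But $3d(d-2)=d(3d-8)>2d$ for every $d\geq3$, a contradiction. Hence $d\leq2$, and (excluding the degenerate case in which $\hat\gamma$ is a line) $d=2$, as claimed.

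\emph{The main obstacle.} I expect Step~3 to be routine and Step~1 to be Bolotin's classical algebraicity argument; the substance should lie in Step~2 --- carrying the whole angular-billiard machinery through the central projection, checking that the nondegenerate conic $Q$ really inherits the exact role of $x^2+y^2$, re-deriving the analogue of (\ref{eq2}) on the curved surface, and isolating why transversality of a single point of $\tilde\gamma\cap\Lambda$ is what keeps the constant $c$ nonzero (the step that both rules out $\tilde\gamma$ being a line and makes the inflection identity usable).
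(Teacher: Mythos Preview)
The paper does not give its own proof of this theorem: it is quoted from \cite{bolotin2} and stated without argument, so there is nothing in the present text to compare against. Bolotin's 1992 proof predates the angular-billiard machinery of \cite{BM1},\cite{BM2} and works directly with the polynomial integral written in angular-momentum coordinates on $T\Sigma$; it does not pass through a Hessian identity of the type~(\ref{eq2}).

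Your outline has a genuine gap, located exactly where you flagged it. The angular-billiard construction of Section~2 lives on the \emph{dual} curve $\Gamma$: the billiard reflection at $z\in\gamma$ is an involution on the pencil of lines through $z$, which by polar duality is the tangent line to $\Gamma$ at the dual point, and identity~(\ref{eq2}) is an identity along $\Gamma$, not along $\gamma$. Your Step~2 transplants this structure to tangent lines of $\tilde\gamma$ itself and asserts $g^3H(f)=cQ^k$ on $\tilde\gamma$; but the billiard produces no involution on a tangent line to $\gamma$, and running the machinery honestly yields constraints on $\tilde\Gamma$ --- this is precisely Theorem~3.2 of the paper --- not on $\tilde\gamma$. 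Nor does the hypothesis ``$\tilde\gamma$ smooth with one transversal point on $\Lambda$'' dualize to what you need: with $\Lambda$ self-dual, a singular point of $\tilde\Gamma$ on $\Lambda$ corresponds to an inflectional or bitangent \emph{line} of $\tilde\gamma$ that is tangent to $\Lambda$, not to an inflection \emph{point} of $\tilde\gamma$ lying on $\Lambda$, which is what your Step~3 would require.

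A smaller issue in Step~3: even granting that all inflection points lie on $\Lambda$, you compare $3d(d-2)$ (inflections counted \emph{with multiplicity}) against $2d$ (a bound on the number of \emph{distinct} points of $\tilde\gamma\cap\Lambda$); a single intersection point could carry large flex multiplicity, so the inequality as written is not justified. This is repairable --- on a smooth degree-$d$ curve the flex multiplicity at any point is at most $d-2$, so the total over $\tilde\gamma\cap\Lambda$ is at most $2d(d-2)<3d(d-2)$ for $d\ge3$ --- but you should say so. (Also, $3d(d-2)-2d=d(3d-8)$; your displayed ``$=$'' is missing a ``$-\,2d$''.)
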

\vskip3mm

The methods of the previous section are applicable to the Birkhoff billiard on $\Sigma$. In particular the following theorem holds.

\vskip3mm
\begin{theorem} (\cite{BM2})
Let $\tilde{\Gamma}$ in ${\mathbf C}P^2$ be the dual curve of $\tilde{\gamma}$. Then the following alternative holds: either $\tilde{\Gamma}$ is a conic, or $\tilde{\Gamma}$ necessarily contains
singular points, so that all the singular and all inflection points of $\tilde{\Gamma}$ belong to the absolute $\Lambda$.
\end{theorem}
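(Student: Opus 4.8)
The plan is to run the angular-billiard argument of Section~2 inside the projective model of $\Sigma$ in ${\mathbf C}P^2$, the only structural change being that the Euclidean form $x^2+y^2$ is everywhere replaced by the quadratic form
$$
 q(x_1,x_2,x_3)=x_1^2+x_2^2\pm x_3^2
$$
defining the absolute $\Lambda$ (plus sign for $K=1$, minus sign for $K=-1$), and that polar duality with respect to a center $O$ is replaced by projective duality with respect to $\Lambda$, i.e. with respect to the quadric cutting out $\Sigma$. Under this dictionary the three ingredients of Section~2 --- the reduction to the angular billiard, the functional identity \eqref{eq1}, and the Hessian identity \eqref{eq2} --- should carry over with only cosmetic changes.

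Concretely I would proceed as follows. By the result of Bolotin cited above (\cite{bolotin2}), a non-constant polynomial integral on $\{|v|=1\}$ makes $\hat\gamma$ algebraic, so $\tilde\gamma\subset{\mathbf C}P^2$ is a well-defined irreducible curve and its $\Lambda$-dual $\tilde\Gamma$ is defined. Normalizing the integral $\Phi$ to be homogeneous and to vanish on tangent vectors to $\gamma$, one transfers it, exactly as in the discussion preceding Theorem~\ref{th2.4}, to an integral of the angular billiard for $\tilde\Gamma$ of the form
$$
 G=\frac{F(x_1,x_2,x_3)}{q(x_1,x_2,x_3)^{p}},
$$
with $F$ a form of degree $n=2p$ vanishing on $\tilde\Gamma$; here one uses that in the model geodesics of $\Sigma$ are straight lines and that the reflection law together with $\Lambda$-duality produces, on every tangent line $l$ to $\tilde\Gamma$, a projective involution leaving invariant both $l\cap\tilde\Gamma$ and $l\cap\Lambda$. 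Writing $F=f^{k}g_{1}$ with $f$ the minimal defining polynomial of $\tilde\Gamma$, passing to $\tilde G=G^{1/k}$, $\tilde F=fg$, $g=g_{1}^{1/k}$, $m=p/k$, and analysing this involution near points of tangency, one obtains the curved analogues of \eqref{eq1} and \eqref{eq2}; in particular
$$
 g^{3}(x,y)\,H\bigl(f(x,y)\bigr)=c_{1}\,q(x,y)^{\,3m-3}
$$
for $(x,y)$ on an arc $\Gamma_{1}$ of $\tilde\Gamma$, with $c_{1}\neq0$ and $H$ the affine Hessian. Since both sides are polynomials while $\Gamma_{1}$ lies on the irreducible curve $\{f=0\}$, this is an identity modulo $f$.

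The alternative of the theorem now follows. If $P\in\tilde\Gamma$ is a singular point then $f=f_x=f_y=0$ at $P$, hence $H(f)(P)=0$; if $P$ is an inflection point then $H(f)(P)=0$ as well, since the affine Hessian vanishes at the inflection points of $\{f=0\}$. As $f(P)=0$ in either case, the identity above gives $c_{1}\,q(P)^{3m-3}=0$, so $q(P)=0$, that is $P\in\Lambda$; hence all singular and all inflection points of $\tilde\Gamma$ belong to the absolute. Finally, suppose $\tilde\Gamma$ is smooth and not a conic, so $d:=\deg\tilde\Gamma\geq3$. A generic tangent line meets $\tilde\Gamma$ at its point of tangency $T$ with multiplicity $2$ and in $d-2$ simple points, so the (nontrivial) projective involution on this line fixes $T$ and permutes the remaining $d-2$ intersection points with $\tilde\Gamma$ together with the two intersection points with $\Lambda$; the analysis of such involutions carried out in \cite{BM1} is incompatible with $d\geq3$ (for instance, if $d=3$ the involution would have to fix the single extra point as well, hence be trivial). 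Therefore $\tilde\Gamma$ is a conic, or it contains singular points, and in the latter case the localization on $\Lambda$ just proved holds.

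The step I expect to be the main obstacle is the derivation of the Hessian identity, and within it the verification that the distinguished quadratic form appearing there is precisely $q$: in the Euclidean case the factor $x^2+y^2$ enters through the metric and the position of the center $O$, and one must check that changing the metric of $\Sigma$ and using $\Lambda$-duality really substitutes $x_1^2+x_2^2\pm x_3^2$ for $x^2+y^2$ and introduces nothing else. A secondary difference from the planar situation is that $\Lambda$ is now a smooth conic rather than the degenerate pair of isotropic lines $L_{+}\cup L_{-}$, so the B\'ezout and degree bookkeeping at the points of $\tilde\Gamma\cap\Lambda$ must be re-examined, as must the behaviour of the auxiliary factor $g$ at singular points of $\tilde\Gamma$ in order to conclude that singular --- and not merely inflection --- points land on the absolute.
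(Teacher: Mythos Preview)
Your overall strategy matches what the survey indicates: the paper does not prove the theorem here but simply records that ``the methods of the previous section are applicable'' and cites \cite{BM2}. Your plan of transporting the angular-billiard construction, the functional identity (\ref{eq1}), and the Hessian identity (\ref{eq2}) to the projective model of $\Sigma$, with $q=x_1^2+x_2^2\pm x_3^2$ in place of $x^2+y^2$ and $\Lambda$-duality in place of polar duality, is precisely that transport; and your use of the transported Hessian identity to force singular and inflection points onto $\Lambda$ is the intended mechanism.

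There is, however, a concrete error in the last step, where you argue that a smooth $\tilde\Gamma$ of degree $d\geq 3$ is impossible. For $d=3$ you say the projective involution on a generic tangent line $l$ fixes both $T$ and the single residual point of $l\cap\tilde\Gamma$, ``hence [is] trivial''. This is false: every nontrivial projective involution of $\mathbf P^1$ has exactly two fixed points, so fixing $T$ and the residual point is entirely compatible with a nontrivial involution --- it can, for instance, swap the two points of $l\cap\Lambda$. No contradiction follows from your argument. The dichotomy ``conic or singular'' is not obtained this way; in the planar case the survey already credits it to Bolotin \cite{bolotin}, with \cite{BM1} adding the localisation on $L_\pm$, and \cite{BM2} adapts both ingredients to the curved setting. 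So while your outline is on the right track, the piece you offer in lieu of that adaptation does not close the gap, and this is exactly the part you should expect to require the most work beyond the cosmetic substitution $x^2+y^2\rightsquigarrow q$.
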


The following theorems are analogous to Theorem \ref{th2.8} and Theorem \ref{th2.10}.
\vskip3mm

\begin{theorem}\label{th3.3} (\cite{g1})
	Curve $\tilde\Gamma$ has the following property:
For every intersection point $C$ of $\tilde{\Gamma}$ with $\Lambda$

(a) Every branch tangent to $\Lambda$ at $C$ is quadratic;

(b) Every branch transversal to $\Lambda$ at $C$ is regular and quadratic.
\end{theorem}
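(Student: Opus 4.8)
The plan is to adapt Glutsyuk's argument for Theorem~\ref{th2.8} to the surface $\Sigma$, replacing the pair of isotropic lines $L_\pm$ by the absolute conic $\Lambda$, and then to point out the single place where the smoothness of $\Lambda$ enters. First I would verify that the angular billiard machinery of Section~2 transfers with only notational changes: one still obtains the dual curve $\Gamma$, an invariant of the form $F/Q^{p}$ with $Q$ an affine equation of the absolute, the functional identity of the type~(\ref{eq1}), and the affine Hessian identity of the type~(\ref{eq2}) in which $(x^{2}+y^{2})^{3m-3}$ is replaced by the appropriate power of $Q$; this is the content behind the two theorems of~\cite{BM2} already quoted. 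From it one extracts the two facts needed below: every singular and every inflection point of $\tilde\Gamma$ lies on $\Lambda$, and on each tangent line to $\tilde\Gamma$ there acts a nontrivial projective involution preserving the divisor cut out on that line by $\tilde\Gamma$. I would also note at this stage why only conditions (a) and (b) appear and there is no analogue of condition (i) of Theorem~\ref{th2.8}: condition (i) concerns the node $0=L_+\cap L_-$ of the degenerate conic $L_++L_-$, whereas $\Lambda$ is smooth, so every $C\in\tilde\Gamma\cap\Lambda$ is a smooth point of the absolute and the only dichotomy at $C$ is ``branch tangent to $\Lambda$'' versus ``branch transversal to $\Lambda$''.

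The remainder is a local analysis at a fixed point $C\in\tilde\Gamma\cap\Lambda$. Pass to adapted coordinates in which $C$ is the origin and the chosen branch of $\tilde\Gamma$ is tangent to the first coordinate axis, and write it in the Puiseux form~(\ref{branch}), $t\mapsto(t^{q},ct^{p}(1+o(1)))$, $1\le q<p$, $c\neq0$. Letting the tangency point $T$ run along this branch toward $C$, the projective involution on the tangent line at $T$ keeps one fixed point at $T$ and has a second fixed point whose displacement from $T$ is controlled by the vanishing of $Q$ along the branch, hence tends to zero at a rate equal to the order of contact of the branch with $\Lambda$ at $C$; so the involution degenerates as $T\to C$, its normalized limit depending jointly on that contact order and on the exponents $p,q$. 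Imposing, via~(\ref{eq1}), that this degenerating involution still preserve the intersection of the tangent line with $\tilde\Gamma$ yields a relation among the exponents. In case (a), the branch tangent to $\Lambda$, the contact order is at least $2$, and matching the local intersection multiplicity of the tangent line with $\tilde\Gamma$ at $T$ against the scaling exponent of the degenerating involution forces $p/q=2$ --- the branch is quadratic, exactly as in part (ii-a) of Theorem~\ref{th2.8}. In case (b), the branch transversal to $\Lambda$, the second fixed point stays away from $T$ for $T$ near $C$, and a parallel but finer computation first excludes $q>1$ (a singular branch cannot be interchanged with the remaining intersection points by an involution of this shape) and then forces $p=2$, giving ``regular and quadratic''.

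I expect the genuine work to sit in the case (b) asymptotics: one has to expand the fractional-linear involution and the Puiseux branch simultaneously in the tangency parameter and in $t$, separate the contribution of the fixed point limiting to $C$ from that of the one staying away, and retain enough terms to see that $q=1$, $p=2$ is the only possibility compatible with invariance of the intersection divisor. The structural point to keep in view is that near $C$ the local equation of the smooth conic $\Lambda$ differs from the local equation of a line by a higher-order correction, whereas in the Euclidean case $Q=x^{2}+y^{2}$ restricts near a smooth point of $L_++L_-$ to a nonvanishing multiple of a linear form; this correction contributes only beyond the leading order and does not disturb the matching of exponents. Everything else --- the transcription of~(\ref{eq1}) and~(\ref{eq2}) to $\Sigma$ and the reduction to the invariant involution --- is a routine adaptation of Section~2 and of~\cite{BM2}, and case (a) then follows from the Euclidean analysis of~\cite{g1} with no essential change.
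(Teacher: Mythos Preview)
The paper does not contain a proof of Theorem~\ref{th3.3}; it merely states the result and attributes it to~\cite{g1}, just as it does for Theorems~\ref{th2.8} and~\ref{th2.10}. There is therefore nothing in the present paper to compare your proposal against.

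That said, your outline is a reasonable roadmap of how the proof in~\cite{g1} actually proceeds: one transports the angular--billiard machinery to $\Sigma$ via~\cite{BM2}, obtains the functional equation and the Hessian identity with the absolute $\Lambda$ in place of $L_+\cup L_-$, and then performs Glutsyuk's local Puiseux analysis at each $C\in\tilde\Gamma\cap\Lambda$. Your observation that no analogue of condition~(i) is needed because $\Lambda$ is smooth is exactly right.

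What you have written, however, is a plan and not a proof. You yourself flag that ``the genuine work'' sits in the case~(b) asymptotics, and you describe that step only heuristically: phrases such as ``a parallel but finer computation first excludes $q>1$\ldots and then forces $p=2$'' and ``matching the local intersection multiplicity\ldots against the scaling exponent'' do not constitute an argument. The actual mechanism in~\cite{g1} is a careful bookkeeping of how the projective involution on the tangent line, expressed in the local Puiseux coordinate, permutes the other intersection points of the tangent with $\tilde\Gamma$; one has to track the leading and subleading terms simultaneously and show that any exponent pair other than $(q,p)=(1,2)$ (transversal case) or $p/q=2$ (tangent case) produces an incompatibility. None of that computation appears in your proposal. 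If you intend to supply a self-contained proof rather than a pointer to~\cite{g1}, this is the part you must actually write out.
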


The algebraic Birkhoff conjecture for constant curvature surfaces follows from the following general fact:
\vskip5mm

\begin{theorem} (\cite{g1})
Let $\gamma\in{\mathbf C}P^2$ be irreducible algebraic curve different from a line such that
all its singular and inflection points belong to the  absolute $\Lambda$.
If $\gamma$ satisfies the conditions (a), (b) of Theorem \ref{th3.3},
then $\gamma$ is a conic.
\end{theorem}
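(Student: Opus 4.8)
The plan is to argue by contradiction. Put $d=\deg\gamma$; since $\gamma$ is irreducible and not a line we have $d\geq 2$, so it suffices to rule out $d\geq 3$, leaving $d=2$, a conic. We may assume $\gamma\neq\Lambda$, otherwise there is nothing to prove. By B\'ezout's theorem $\gamma\cap\Lambda$ is a finite set $\{C_1,\dots,C_r\}$ with $\sum_i(\gamma\cdot\Lambda)_{C_i}=2d$, and by hypothesis every singular point and every inflection point of $\gamma$ lies among the $C_i$.

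The first step is to extract from conditions (a), (b) the decisive qualitative fact: $\gamma$ has no inflection at any smooth point. Indeed a branch of Puiseux index $p/q=2$ meets its own tangent line with contact order equal to twice its multiplicity; for a smooth branch ($q=1$, $p=2$) this is the minimal, non-inflectionary, contact order $2$. Since every inflection of $\gamma$ was assumed to lie on $\Lambda$, and every branch at a point of $\Lambda$ is quadratic, no smooth point of $\gamma$ is an inflection. Condition (b) further forces every branch transversal to $\Lambda$ to be smooth, so the singularities of $\gamma$ reduce to ordinary multiple points (clusters of smooth branches, possibly with higher mutual tangency) together with the mildly singular ``cuspidal quadratic'' branches still permitted by (a), whose local model has the form $y=x^{2}+(\text{fractional higher order terms})$ and which are necessarily tangent to $\Lambda$.

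The second step is the Pl\"ucker--genus bookkeeping. Writing $\delta_C$ for the $\delta$-invariant of $\gamma$ at $C$, the geometric genus of $\gamma$ equals $\binom{d-1}{2}-\sum_i\delta_{C_i}$, hence $\sum_i\delta_{C_i}\leq\binom{d-1}{2}$. On the other hand, by the first step the Hessian curve $\mathrm{Hess}(\gamma)$, of degree $3(d-2)$ and not containing $\gamma$, meets $\gamma$ only along the singular locus, hence only at the $C_i$; therefore $3d(d-2)=\sum_i(\gamma\cdot\mathrm{Hess})_{C_i}$. For the admissible singularities the local number $(\gamma\cdot\mathrm{Hess})_{C_i}$ is governed by $\delta_{C_i}$ — it equals $6\delta_{C_i}$ at ordinary multiple points and exceeds it only by a bounded amount for each cuspidal branch. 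If that cuspidal correction can be kept small, comparing the two displays yields an inequality $3d(d-2)\leq 6\binom{d-1}{2}+(\text{lower order})$, which is violated for $d\geq 3$; in the extreme case where all branches at the $C_i$ are smooth one gets directly the contradiction $\binom{d-1}{2}-\tfrac{d(d-2)}{2}=-\tfrac{d-2}{2}<0$ for the genus.

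The hard part, and the actual content of \cite{g2}, is precisely the control of the cuspidal quadratic branches: the elementary inequalities above do not by themselves exclude many of them, so one must use conditions (a), (b) together with the global structure behind them — the projective involution acting on every tangent line to $\tilde\Gamma$, encoded in identities (\ref{eq1}) and (\ref{eq2}) — and a careful local analysis of how such a branch contributes simultaneously to the class of $\gamma$, to the Hessian intersection, and to the dual curve. The natural goal is to show that a cuspidal quadratic branch would either force an inflection of $\gamma$ off $\Lambda$ or be incompatible with the fixed-point structure of the involution on the tangent line at $C$. Once such branches are ruled out, the genus computation above closes the case $d\geq 3$, and $\gamma$ is a conic.
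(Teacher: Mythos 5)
Your write-up does not constitute a proof: it reduces the theorem to exactly the statement that carries all of its content, and then stops. The survey itself gives no argument for this theorem (it is quoted from \cite{g1}, resting on \cite{g2}), so the attempt must stand on its own; but after the genus/Hessian set-up you explicitly concede that excluding or controlling the singular quadratic branches --- those with Puiseux index $p/q=2$ but $q\ge 2$, which condition (a) of Theorem \ref{th3.3} still permits and which are tangent to $\Lambda$ --- is ``the hard part, and the actual content of \cite{g2}'', and you only formulate a ``natural goal'' (that such a branch would force an inflection off $\Lambda$ or contradict the involution on tangent lines) without proving it. Everything preceding that point is standard bookkeeping; the decisive step is missing.

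Moreover, the bookkeeping alone goes the wrong way, so the gap cannot be waved off as a routine estimate. From B\'ezout, $3d(d-2)=\sum_i I_{C_i}(\gamma,\mathrm{Hess})$, and the genus bound gives $\sum_i\delta_{C_i}\le\binom{d-1}{2}$; a contradiction for $d\ge 3$ requires an \emph{upper} bound $I_{C_i}(\gamma,\mathrm{Hess})\le 6\delta_{C_i}$ up to a total excess strictly less than $3(d-2)$. Equality holds at ordinary multiple points, but every singular branch contributes a strictly positive excess over $6\delta$ (already the ordinary cusp gives $8>6$, and a quadratic singular branch such as $y=x^{2}+x^{5/2}+\ldots$, an $A_4$-type point allowed by (a), gives a larger one), and this excess can grow with the number and complexity of such branches. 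So the elementary genus--Hessian comparison yields no contradiction precisely in the presence of the branches you need to rule out; eliminating them requires the finer local analysis of the tangent-line involution and the symmetry property developed in \cite{g2}, which you invoke but do not carry out. As it stands, the argument proves the theorem only under the additional, unjustified assumption that all branches at the points of $\gamma\cap\Lambda$ are smooth.
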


\section{\bf Magnetic billiards}
Magnetic billiards is a very important and interesting class of billiards. There is an evidence (see i.e. \cite{BR}) that integrable magnetic billiards are very rare. In the sequel we describe several models of magnetic billiards and explain how the algebraic technique can be applied to test their integrability.

One considers the influence of a magnetic field of
constant magnitude $\beta>0$ on the billiard motion inside a convex domain
$\Omega\subset\mathbf{R}^2$ bounded by a simple smooth
closed curve $\gamma$. A
particle moves inside $\Omega$ with unit speed along a Larmor circle
of constant radius $r=\frac{1}{\beta}$ in counterclockwise
direction. Upon hitting the boundary, the particle is reflected
according to the law of geometric optics. This is the model of an ordinary
magnetic billiard.
On the other hand if the magnetic field has constant magnitude but changes sign after every collision with the boundary we call such a model two-sided magnetic billiard.
One can also study the case of a magnetic billiard on Sphere or Hyperbolic plane.
For the case of the surfaces of constant curvature $K=\pm 1$ and for constant magnetic field $\beta$ the motion between the collisions is along curves of constant geodesic curvature $\beta$. On the Sphere these are always Larmor circles of constant geodesic radius $r$, where  $\beta=\cot r.$ While on the Hyperbolic plane the curves of constant geodesic curvature $\beta$ are geodesic circles only when $\beta>1$. In this case the geodesic radius of the Larmor circles is given by $\beta=\coth r$. The case $\beta\leq 1$ is also very interesting, but will not be considered here.
\begin{figure}[h]
	\centering
	\includegraphics[width=0.5\linewidth]{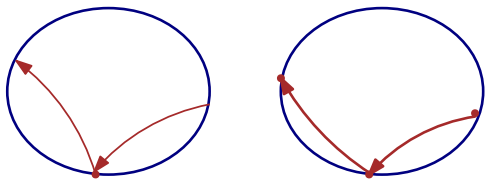}
	\caption{Ordinary vs. two-sided magnetic billiard.}
	\label{1}
\end{figure}
For all the models described above we shall assume that the boundary $\gamma$ of the domain $\Omega$ satisfies
$$
0<\beta< k_{\min}:=\min_{\gamma} k,
$$
where $k$ is the curvature of the boundary. In other words, we assume that the
magnetic field is relatively weak with respect to the curvature. In this
case billiard dynamics is correctly defined because  the boundary of the domain $\Omega$ is strictly convex with
respect to the circles of radius $r.$ So
the intersection of any circle of radius $r$ with
$\Omega$ consists of at most one arc. Moreover, under this
assumption, if a circle of radius $r$ oriented in the same direction
as the boundary is tangent to $\partial\Omega$ (with the agreed
orientation), then it contains the domain $\Omega$ inside.

Ordinary Magnetic
billiards were studied in many papers; see, e.g.,
\cite{Berg}, \cite{B}, \cite{GB},
\cite{BR}, \cite{T2}, \cite{BM3}.
Two-sided magnetic billiards were introduced  in \cite{k}.
Magnetic billiards on surfaces of constant curvature were studied in \cite{GB}, \cite{B}, \cite{B2} and recently in \cite{BM4} by the algebraic
approach.

The key ingredient of our approach for all magnetic models is to consider the "dual" object to the billiard, namely we define the domain $\Omega_r$ which consists of all centers of Larmor circles intersecting the original domain $\Omega$. This domain is a natural phase space of magnetic billiard and is diffeomorphic to an annulus bounded by two convex curves $\gamma_{\pm r}$. These curves have many names depending on the science they appear. They are called parallel curves, equidistants, fronts or offset curves.
It turns out that if a magnetic billiard admits an  integral which is polynomial in velocities, then
these curves, as well as $\gamma$ itself, are algebraic curves.  Moreover, the integrability imposes severe restriction on the singularities of these algebraic curves in $\mathbf C^2$. It is natural to expect that a more detailed analysis leads to complete classification of integrable magnetic billiards.

Now we turn to the formulation of our results on ordinary magnetic billiards on the plane \cite{BM3} and on surfaces of constant curvature \cite{BM4}.

\subsection{Ordinary Magnetic billiard}

Denote by $f_{\pm r}$ the minimal defining
polynomials of the irreducible component in $\mathbf{C}^2$
containing $\gamma_{\pm r}$, respectively. Since the curves
$\gamma_{\pm r}$ are real, $f_{\pm r}$ have real coefficients.
Notice, that it may happen that both $\gamma_{\pm r}$ belong to the
same component, so that $f_{+r}=f_{-r}.$ For instance, this is the
case for parallel curves to $\gamma$ when $\gamma$ is an ellipse.
In this case $f_{-r}=f_{+r}$ is
an irreducible polynomial of degree $8$.
\begin{theorem}\label{main1}
	Let $\Omega$ be a convex bounded domain with a smooth
	boundary $\gamma=\partial\Omega$ with
	the curvature strictly greater than $\beta$. Suppose that the
	magnetic billiard in $\Omega$ admits a non-constant polynomial integral $\Phi.$
	Then
	the affine curves $\{f_{\pm
		r}=0\}$ are smooth in $\mathbf{C}^2.$
\end{theorem}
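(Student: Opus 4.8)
The proof should follow the template of Section 2, with the equidistants $\gamma_{\pm r}$ playing the role of the polar dual $\Gamma$ and a one-parameter family of radius-$r$ circles playing the role of the family of tangent lines. The first step is to transfer the integral to the ``dual'' domain $\Omega_r$. Since a trajectory between two collisions is a complete Larmor circle, a polynomial integral $\Phi(x,v)$ restricted to $\{|v|=1\}$ is constant along each such circle and hence defines a function $\Psi$ of the Larmor centre $c$ alone; writing a point of the circle as $x=c+rv^{\perp}$, substituting, and averaging over $v\in S^{1}$, one checks that $\Psi$ is a polynomial in $c$ (non-constant since $\Phi$ is). The reflection law of the magnetic billiard says exactly that at a collision point $P\in\gamma$ the incoming and outgoing Larmor centres $c_{-},c_{+}$ (both at distance $r$ from $P$) are related by $c_{+}=\rho_P(c_{-})$, where $\rho_P$ is the reflection of the plane in the normal line to $\gamma$ at $P$. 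Hence $\Psi=\Psi\circ\rho_P$ on the circle $C_P=\{|x-P|=r\}$ for every $P\in\gamma$, that is, on all of $\Omega_r$.

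The second step makes $\gamma_{\pm r}$ algebraic and normalises. Letting $c$ tend to the boundary components $\gamma_{\pm r}$ of $\Omega_r$ one enters the glancing regime, in which the incoming and outgoing Larmor centres merge; one shows (glancing dynamics, exactly as for ordinary billiards --- the billiard map is a shear near these curves and the nearby invariant curves accumulate on them) that $\Psi$ is constant on each $\gamma_{\pm r}$. Thus $\gamma_{\pm r}$ are real algebraic curves, contained in level sets of the polynomial $\Psi$, and the minimal polynomial $f_{\pm r}$ divides $\Psi-c_{\pm}$. Writing $\Psi-c_{\pm}=f_{\pm r}^{k_{\pm}}h_{\pm}$ with $h_{\pm}$ not identically zero on $\gamma_{\pm r}$ and passing, as in Section 2, to the root $\tilde\Psi_{\pm}:=(\Psi-c_{\pm})^{1/k_{\pm}}=f_{\pm r}\,g_{\pm}$ --- again invariant under all the $\rho_P$ --- one reaches the following picture: each $C_P$ is tangent to $\gamma_{+r}$ at $P+rn(P)$ and to $\gamma_{-r}$ at $P-rn(P)$, the $C_P$ ($P\in\gamma$) sweep out a one-parameter family of such tangent circles, and on each of them $\rho_P$ acts as an involution fixing the two tangency points and preserving the finite set $C_P\cap\{f_{\pm r}=0\}$. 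This is the precise analogue of the family of tangent lines to $\tilde\Gamma$ carrying the involutions of (\ref{eq1}), with tangent lines replaced by tangent circles of the fixed radius $r$.

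The third step extracts the algebraic identity and runs the singularity analysis. Differentiating $\tilde\Psi_{\pm}\circ\rho_P=\tilde\Psi_{\pm}$ along the family $P\in\gamma$ and passing to the tangency limit, exactly as in the passage from (\ref{eq1}) to (\ref{eq2}), I expect an identity of Hessian type valid along $\gamma_{\pm r}$,
\[
 g_{\pm}^{3}\,H(f_{\pm r})=\Theta_{\pm},
\]
with $\Theta_{\pm}$ an explicit expression built from $r$ and the geometry of $\gamma$ that does not vanish at any finite point --- the analogue of the nonzero right-hand side $c_1(x^2+y^2)^{3m-3}$, whose zeros in that model sit on the isotropic lines through the duality centre; here there is no such centre, and the corresponding zeros are pushed out to the cyclic points at infinity, which is precisely why the statement is confined to $\mathbf{C}^2$. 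Propagating this identity and the involution structure to the complexification $\{f_{\pm r}=0\}\subset\mathbf{C}^2$ by analytic continuation along the curve, a singular point there would carry a local branch with Puiseux parametrisation $t\mapsto(t^{q},ct^{p}(1+o(1)))$, $1\le q<p$; feeding this into the involution on the tangent circle at the singular point forces an order-of-vanishing mismatch with $\Theta_{\pm}\neq0$, exactly as in Glutsyuk's derivation of Theorem \ref{th2.8}. Hence $\{f_{\pm r}=0\}$ has no singular points in $\mathbf{C}^2$; the degenerate case $f_{+r}=f_{-r}$ (two real ovals on one irreducible complex curve) is handled with only notational changes.

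The hard part will be the third step, the derivation of the Hessian-type identity. In Section 2 polar duality is a projective, rational operation, whereas the passage from $\gamma$ to its equidistants $\gamma_{\pm r}$ involves the Euclidean norm $|\nabla f|$ and is merely algebraic, so formula (\ref{eq2}) cannot simply be transcribed; one must compute intrinsically with the minimal polynomials $f_{\pm r}$ and the family of radius-$r$ circles. A second point requiring care is to single out, among the intersection points of $C_P$ with $\{f_{\pm r}=0\}$, the genuine magnetic-reflection partners, as opposed to spurious intersections created by complexification, so that the involution is applied to the correct pair. Once the identity is in hand, the singularity analysis is a routine adaptation of the arguments already developed in the non-magnetic case.
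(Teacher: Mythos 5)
Your architecture is the same as that of the actual proof (Theorem \ref{main1} is proved in \cite{BM3}; the survey reproduces the method in detail only for the two-sided analogue, Theorem \ref{main}): pass to the annulus $\Omega_r$ of Larmor centers, show the induced center function is a polynomial constant on $\gamma_{\pm r}$, extract from the reflection law near tangency an identity along $\gamma_{\pm r}$ with non-vanishing right-hand side, and exclude finite singular points of $\{f_{\pm r}=0\}$ by an order count along Puiseux branches. But the step you yourself call ``the hard part'' is exactly the content of the theorem, and your guessed identity is not the right one. Expanding the functional equation $F(z+rR_{-\epsilon}n)=F(z+rR_{\epsilon}n)$ to third order in the reflection angle (the computation is carried out in the two-sided variant in (\ref{rem1})--(\ref{rem5})) one does not get $g_{\pm}^{3}H(f_{\pm r})=\Theta_{\pm}$ with $\Theta_{\pm}$ built from $r$ and the geometry of $\gamma$; one gets that $|\nabla\tilde F|^{3}\,(k_{\pm}-\beta)$ is constant along $\gamma_{\pm r}$, i.e. by (\ref{k})
\[
g_{\pm}^{3}\bigl(H(f_{\pm r})\mp\beta\,|\nabla f_{\pm r}|^{3}\bigr)=c_{\pm}\quad\text{on }\gamma_{\pm r},
\]
with $c_{\pm}$ constants. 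The crux, the non-vanishing of $c_{\pm}$, has nothing to do with pushing zeros to the cyclic points at infinity: it follows from the strict inequalities $0<\rho_{+}<r<\rho_{-}$, i.e. $k_{+}>\beta>k_{-}$ on the real ovals, evaluated at a generic real point where $\nabla f_{\pm r}\neq 0$ and $g_{\pm}\neq0$. Once $c_{\pm}\neq0$, squaring gives a polynomial identity on the Zariski closure, and at any finite singular point both $H(f_{\pm r})$ and $f_{\pm r,x}^2+f_{\pm r,y}^2$ have positive order along every local branch while $g_{\pm}$ has non-negative order, so the left-hand side vanishes while the right-hand side does not; no appeal to Glutsyuk's involution analysis of Theorem \ref{th2.8} is needed, and that analysis by itself would only restrict branch types (as in Theorem \ref{main}) rather than yield smoothness.

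Two smaller gaps. Polynomiality of the center function is not obtained by averaging over $v\in S^{1}$: the coefficients $a_{kl}(x)$ are only continuous functions, so the average is not visibly a polynomial in $c$; this is a genuine theorem (Theorem 1.2 of \cite{BM3}) requiring the structure theory of polynomial integrals of the constant magnetic flow. And your argument for constancy of $\Psi$ on $\gamma_{\pm r}$ via ``invariant curves accumulating on the boundary'' assumes objects you do not have; either argue via near-tangent orbits creeping along the boundary together with continuity of $\Psi$, or, as the paper does, differentiate the functional equation in $\epsilon$ and $s$ at $\epsilon=0$ (Proposition \ref{prop}). With the correct identity in place the rest of your plan (normalization $\tilde\Psi_{\pm}=f_{\pm r}g_{\pm}$, treatment of the case $f_{+r}=f_{-r}$, Puiseux analysis) does match the paper's proof.
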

\begin{corollary}\label{all1}
	For any non-circular domain $\Omega$ in the plane, the magnetic
	billiard inside $\Omega$ is not algebraically integrable for all but
	finitely many values of $\beta$.
\end{corollary}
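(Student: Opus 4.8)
\bigskip

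\noindent\textbf{Proof strategy for Corollary \ref{all1}.}
The idea is to combine Theorem \ref{main1} with the fact that the equidistant curves of a non-circular boundary carry a cusp in $\mathbf{C}^2$ for all but finitely many values of $r=1/\beta$. We may assume $\gamma$ is a real algebraic curve: if it is not, then a polynomial integral would already force $\gamma$ (and the curves $\gamma_{\pm r}$) to be algebraic (this is part of the theory underlying Theorem \ref{main1}, see \cite{BM3}), so the magnetic billiard is not algebraically integrable for \emph{any} $\beta$ and there is nothing to prove. So let $\gamma$ be algebraic and not a circle, and let $\tilde\gamma\subset\mathbf{C}P^2$ be the irreducible curve containing it; by the contrapositive of Theorem \ref{main1} it suffices to show that for all but finitely many admissible $r$ at least one of $\{f_{+r}=0\}$, $\{f_{-r}=0\}$ is singular in $\mathbf{C}^2$.

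Here is the core step. Pass to a finite cover $\hat\gamma$ of $\tilde\gamma$ on which the unit normal field $N$ (and the arc-length differential) becomes rational, and view the complex curvature $k$ as a rational function on $\hat\gamma$. A one-line Frenet computation shows that a real algebraic curve with constant complex curvature is a circle, so here $k$ is non-constant and therefore surjective onto $\mathbf{C}P^1$. For fixed $r$ the map $\Psi_r\colon P\mapsto\gamma(P)+rN(P)$ has image the component $\{f_{+r}=0\}$ (whose real locus is the convex parallel curve $\gamma_{+r}$), and differentiating with respect to arc length gives $\Psi_r'=(1-rk)\,T$; hence $\Psi_r$ is non-immersive precisely at the points $P_0$ with $k(P_0)=1/r$, where its image acquires a cusp located at the corresponding center of curvature. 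Since $k$ is surjective, such a $P_0$ exists for every $r\neq0$. Excluding the finitely many $r$ for which $1/r$ is attained by $k$ only at points of $\hat\gamma$ over the line at infinity (so the cusp escapes $\mathbf{C}^2$), and the finitely many $r$ at which $\Psi_r$ fails to be birational onto its image, we obtain for every remaining $r$ a genuine singular point of $\{f_{+r}=0\}$ (and, using $k=-1/r$, of $\{f_{-r}=0\}$) in $\mathbf{C}^2$. Theorem \ref{main1} then rules out algebraic integrability for all these $r$, i.e.\ for all but finitely many $\beta$.

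The step I expect to be the main obstacle is the last one: showing that a non-immersive point of the parametrization really produces a \emph{singular point of the curve} $\{f_{\pm r}=0\}$, and not a smooth point reached by a multiple cover (as in $s\mapsto(s^{2},s^{4})$, which is two-to-one onto the smooth parabola $y=x^{2}$). This requires knowing that $\Psi_r$ is birational onto the component carrying the real parallel curve. I would prove this by observing that if two distinct points of $\hat\gamma$ had the same $\Psi_r$-image for a whole interval of $r$, then differentiating in $r$ would force them to have the same point of $\gamma$ \emph{and} the same unit normal, which is impossible because the deck transformation of $\hat\gamma\to\tilde\gamma$ reverses $N$; a comparison with the Gauss map as $r\to\infty$ (where $\Psi_r/r\to N$, and the Gauss map of a convex curve is generically injective) then pins down the degree of $\Psi_r$ for all large, hence all but finitely many, $r$. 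A cleaner route avoiding the case analysis is to organize all offsets into a single algebraic family $\mathcal{O}\to\mathbf{A}^{1}_{r}$ (the generalized offset variety, defined by the incidence ``$P$ lies on a normal line of $\gamma$'' together with the distance equation), note that the $r$-locus with singular fibre is Zariski closed, hence finite or everything, use the cusp construction to exclude the finite case, and finally transfer singularity of the fibre to singularity of the minimal component $f_{\pm r}$ for cofinitely many $r$.
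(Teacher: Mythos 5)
Your overall strategy --- reduce to $\gamma$ algebraic, then show that for all but finitely many $r=1/\beta$ at least one of $\{f_{\pm r}=0\}$ is singular in $\mathbf{C}^2$, and conclude by the contrapositive of Theorem \ref{main1} --- is the right one, but your primary route is genuinely different from the paper's. The paper's argument (given explicitly for the analogous two-sided Corollary \ref{all}, following \cite{BM3}) stays real and differential-geometric: for $r$ a regular value of the curvature radius function (such $r$ lie in the range of the curvature radius, i.e.\ \emph{outside} the admissible range $0<\beta<k_{\min}$) the parallel curve acquires genuine $A_2$-cusps (\cite{bruce}, Section 7.12); stability of $A_2$ gives a whole interval of such $r$, and since the offset polynomials depend algebraically on $r$, the locus of $r$ with a singular offset is constructible and hence cofinite, which is what transfers the conclusion to the admissible values of $\beta$. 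You instead complexify and manufacture, for each generic $r$ separately, a singular point at a zero of $1-rk$ using surjectivity of the non-constant rational function $k$; this avoids the parameter-continuation step and gives information for (almost) every individual admissible $r$, at the price of the local-versus-multiple-cover issue you correctly flag.

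That flagged gap is real, but it can be closed much more cheaply than by your birationality detour. Choose $1/r$ to avoid the finitely many critical values of $k$ and the finitely many values attained at points over the line at infinity, at isotropic tangencies, or over singular points of $\tilde\gamma$. Then at $P_0$ with $k(P_0)=1/r$ the factor $1-rk$ has a simple zero, and the Frenet computation gives $\Psi_r''=-rk'T\neq0$ while the $N$-component of $\Psi_r'''$ equals $-2k'\neq0$, so the local parametrization has the form $(at^2+\dots,\,bt^3+\dots)$ with $a,b\neq0$: an honest $A_2$ branch, whose parametrization is injective near $t=0$, so its image is a singular point of the component $\{f_{+r}=0\}$ irrespective of whether $\Psi_r$ is globally birational. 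Your birationality lemma is in fact true, and your key observation (coincidence $\Psi_r(P)=\Psi_r(P')$ for infinitely many $r$ forces equal base point and equal normal, which the deck involution forbids) is the right ingredient, but as written the passage from this pointwise statement to ``birational for all but finitely many $r$'' still needs an incidence-variety/constructibility argument, and the appeal to the Gauss map as $r\to\infty$ is too vague to carry it. Likewise, in your ``cleaner'' alternative the unresolved point is exactly the final transfer: a fibre of the generalized offset variety can be singular for reasons having nothing to do with the component cut out by $f_{\pm r}$ (extra components, isotropic artifacts), so singularity of the fibre does not immediately give singularity of the minimal component. With the local $A_2$ fix above, your first argument becomes complete; alternatively, the paper's real-cusp-plus-algebraic-family argument reaches the same cofiniteness with less machinery.
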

In many cases one can get non-integrability for all values of $\beta$, with no exception.
For instance for ellipses the parallel curves appear to be always singular and hence we get:
\begin{corollary}Let $\Omega$ be the interior of the standard ellipse
	$$\partial\Omega=\left\{\frac{x^2}{a^2}+\frac{y^2}{b^2}=1\right\},\quad 0<b<a.$$
	Then for any magnitude of the magnetic field
	$0<\beta<k_{\min}=\frac{b}{a^2}$, the
	magnetic billiard in the ellipse is not algebraically integrable.
\end{corollary}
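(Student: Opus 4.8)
First I would translate the assertion into one about singularities of the parallel curves. If the magnetic billiard in the ellipse $\Omega=\{x^2/a^2+y^2/b^2<1\}$ admitted a non-constant polynomial integral, then by Theorem~\ref{main1} the affine curves $\{f_{\pm r}=0\}$ with $r=1/\beta$ would both be smooth in $\mathbf{C}^2$; so it suffices to produce a single singular point of one of them. Note that $0<\beta<k_{\min}=b/a^2$ is equivalent to $r>a^2/b$, i.e. the Larmor radius exceeds the maximal radius of curvature of the ellipse. The plan is to complexify the obvious parametrization of $\gamma_{\pm r}$ and to locate a complex cusp that persists exactly in this range.

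Write $\gamma(t)=(a\cos t,b\sin t)$ and $p(t)=\sqrt{b^2\cos^2 t+a^2\sin^2 t}$, so that $p(t)^2=b^2+(a^2-b^2)\sin^2 t$; let $n(t)=(b\cos t,a\sin t)/p(t)$ be the outward unit normal, $\kappa(t)=ab/p(t)^3$ the curvature and $T(t)$ the unit tangent, so $\gamma_{\pm r}(t)=\gamma(t)\pm r\,n(t)$ and $\gamma_{\pm r}'(t)=p(t)\bigl(1\pm r\kappa(t)\bigr)T(t)$. Continuing $t$ analytically into $\mathbf{C}$, the map $t\mapsto\gamma_{\pm r}(t)$ parametrizes a branch of the octic $\{f_{\pm r}=0\}$, since the defining polynomial vanishes identically along the parallel curve, hence along its analytic continuation. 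A critical point of this parametrization occurs exactly when $1\pm r\kappa(t)=0$, equivalently when $r^2\kappa(t)^2=1$, i.e.
\[
p(t)^2=(abr)^{2/3},\qquad\text{so}\qquad \sin^2 t=\frac{(abr)^{2/3}-b^2}{a^2-b^2}.
\]
When $r>a^2/b$ one has $(abr)^{2/3}>a^2$, so the last fraction exceeds $1$: thus $\sin^2 t>1$ and $\cos^2 t=\frac{a^2-(abr)^{2/3}}{a^2-b^2}<0$, so such a parameter $t_0$ is genuinely non-real. Since $p(t_0)^2=(abr)^{2/3}\neq0$, the point $\gamma_{\pm r}(t_0)$ is a finite point of $\mathbf{C}^2$; since moreover $p'(t_0)=\frac{(a^2-b^2)\sin t_0\cos t_0}{p(t_0)}\neq0$ (hence $\kappa'(t_0)\neq0$) and $\kappa(t_0)\neq0$ (hence $T'(t_0)\neq0$), the factor $1\pm r\kappa$ has a simple zero at $t_0$ and $t\mapsto\gamma_{\pm r}(t)$ has an ordinary cusp there. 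An ordinary cusp of a parametrized branch lying on $\{f_{\pm r}=0\}$ forces $\nabla f_{\pm r}=0$ at that point, so $\gamma_{\pm r}(t_0)$ is a singular point of $\{f_{\pm r}=0\}$ in $\mathbf{C}^2$. This contradicts Theorem~\ref{main1}, so the magnetic billiard in the ellipse is not algebraically integrable. (The same computation produces a singular point for \emph{every} $r>0$, which is why the conclusion holds for all admissible $\beta$ with no exception.)

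I expect the delicate points to be the complex bookkeeping rather than any new idea. One must check that the analytic continuation of $\gamma_{\pm r}(t)$ stays in the affine plane at $t_0$ — this is exactly the role of $p(t_0)\neq0$, to be contrasted with the isotropic tangency directions of $\gamma$, where $p$ vanishes and the parallel curve runs off to infinity — and that the parametric cusp is genuine (the verification that $1\pm r\kappa$ has a simple zero with transverse third-order term, which reduces to $\kappa(t_0)\kappa'(t_0)\neq0$). One should also confirm, as asserted in the text, that $f_{+r}=f_{-r}$ is irreducible of degree $8$ for $r$ in this range, so that the smoothness conclusion of Theorem~\ref{main1} is genuinely violated. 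A more computational alternative, bypassing the parametrization altogether, is to write $f_r(x,y)$ explicitly as the discriminant — in one coordinate along $\gamma$ — of the family of circles of radius $r$ tangent to $\gamma$, and to solve the system $f_r=\partial_x f_r=\partial_y f_r=0$ directly, exhibiting a solution in $\mathbf{C}^2$ for every $r>a^2/b$.
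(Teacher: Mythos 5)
Your proposal is correct, and at the top level it follows the same strategy as the paper: invoke Theorem~\ref{main1} and contradict it by exhibiting a singular point of the degree-8 parallel curve in $\mathbf{C}^2$. Where you differ is in how the singularity is produced. The paper (following \cite{BM3}, and as illustrated in its two-sided example in Section 5) writes out the octic $f_{\pm r}$ explicitly and verifies ``by direct calculation'' that it has $A_2$-cuspidal points for every $r$; you instead complexify the parametrization $\gamma(t)\pm r\,n(t)$ and locate the cusps conceptually as the complex solutions of $1\pm r\kappa(t)=0$, i.e. $p(t)^2=(abr)^{2/3}$, which for $r>a^2/b$ forces $\sin^2 t>1$. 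This is genuinely the same singular point: for $a=2$, $b=1$ your condition reproduces exactly the coordinates $x_0,y_0$ (with their $r^{2/3}$, $r^{4/3}$ terms) displayed in the paper's two-sided example, so your route can be seen as an a priori derivation of what the paper finds computationally, with the added benefit that the non-degeneracy checks become transparent: the zero of $1\pm r\kappa$ is simple because $p'(t_0)\sim\sin t_0\cos t_0\neq0$, and the cusp is genuine because $\det(\gamma',\gamma'')\equiv ab\neq0$, which is even cleaner than your $\kappa\kappa'\neq0$ criterion. Two small points you should make explicit to close the argument: (i) the passage from a parametric cusp to $\nabla f_{\pm r}=0$ needs the (standard) remark that an analytic germ with Puiseux exponent $3/2$ cannot be contained in a smooth curve germ --- a mere critical point of a parametrization would not suffice, as $t\mapsto(t^2,t^4)$ shows, so the transverse third-order term is exactly what you must (and do) check; (ii) ``$1\pm r\kappa=0$ is equivalent to $r^2\kappa^2=1$'' is sloppy about the sign of the branch of $p$, but harmless: either you use the paper's statement that $f_{+r}=f_{-r}$ is a single irreducible octic, or you note that the monodromy of $p$ lets you choose the continuation path so that the critical parameter lands on whichever of the two parallel-curve continuations you wish, and in either case $f_{\pm r}\circ\Gamma\equiv0$ persists under continuation by the identity theorem. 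With these remarks your proof is complete and valid for all $0<\beta<b/a^2$.
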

\subsection{Magnetic billiards on constant curvature surfaces}
Let $\Sigma$ be a surface of constant curvature $\pm 1$ realized in $\mathbf R^3$ as the unit sphere, for $K=1$, and as the upper sheet of the hyperboloid, for $K=-1$. 
Consider  a convex bounded domain $\Omega$ with smooth
boundary $\gamma=\partial\Omega$ lying on $\Sigma$ and the magnetic billiard inside $\Omega$.
In this case parallel curves $\gamma_{\pm r}$ on $\Sigma$ determine algebraic curves $\tilde\gamma_{\pm r}$ in $\mathbf C^3$ and we have the following:
\begin{theorem}\label{main2}
	Let $\Omega$ be a convex bounded domain with smooth
	boundary $\gamma=\partial\Omega$ which has geodesic curvature strictly greater than $\beta>0$ ($\beta>1$ in the Hyperbolic case). Suppose that the
	magnetic billiard in $\Omega$ admits a non-constant polynomial integral $\Phi.$
	Then if  $\gamma$  is not
	circular, then the curves $\tilde\gamma_{\pm r}$ are smooth algebraic curves in $\mathbf{C}^3$.
\end{theorem}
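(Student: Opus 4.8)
The plan is to mimic the strategy of the planar case (Theorem \ref{main1}) and transport the algebro-geometric machinery of Section 2 to the quadric model of $\Sigma$ in $\mathbf{R}^3$. First I would record the dynamical reduction: the natural phase space of the magnetic billiard is the domain $\Omega_r$ of centers of Larmor circles meeting $\Omega$, and under the weak-field assumption ($\beta<k_{\min}$, and $\beta>1$ in the hyperbolic case) it is a genuinely embedded annulus bounded by the two equidistant curves $\gamma_{\pm r}$ on $\Sigma$. A non-constant polynomial-in-velocities integral $\Phi$ descends to a non-constant function $G$ on $\Omega_r$ invariant under the magnetic billiard map; as already noted above and worked out in \cite{BM3}, \cite{BM4}, this forces $\gamma$ and both equidistants $\tilde\gamma_{\pm r}\subset\mathbf{C}^3$ to be algebraic, with minimal defining polynomials $f_{\pm r}$ having real coefficients. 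So the task reduces to excluding singular points of $\{f_{\pm r}=0\}$ in $\mathbf{C}^3$ under the hypothesis that $\gamma$ is not a geodesic circle.

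Second, I would derive the fundamental identity. Reflection of a Larmor arc at a boundary point $P\in\gamma$ replaces the old center $A$ by the new center $B$, and the crucial geometric fact --- exactly as in the polar-dual/angular picture --- is that $P$ lies at geodesic distance $r$ from both $A$ and $B$ on $\Sigma$, so the pair $(A,B)$ is swept by an involution attached to $P$. Encoding $\gamma$ as the locus equidistant from $\gamma_{+r}$ and $\gamma_{-r}$, and writing the contact conditions in terms of the gradients of $f_{\pm r}$ together with the ambient quadratic form cutting out $\Sigma$, the invariance of $G$ yields a functional identity for $f_{\pm r}$ along $\gamma_{\pm r}$ of the same shape as (\ref{eq1}), and, after differentiation, a Hessian-type identity analogous to (\ref{eq2}) in which the role of $x^2+y^2$ is now played by the quadratic form defining the absolute $\Lambda$. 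Here it is essential to choose coordinates in $\mathbf{C}^3$ adapted simultaneously to $\Sigma$ and to $\Lambda$, so that the Larmor circles become plane sections and the whole computation stays polynomial.

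Third, I would read off smoothness from this identity. As in Section 2, it shows that on every complex tangent to the projective completion of $\tilde\gamma_{\pm r}$ there acts a projective involution preserving the intersection with the curve; at a hypothetical singular point of $\{f_{\pm r}=0\}$ the local branch (\ref{branch}) would then be forced into a quadratic or sub-quadratic Puiseux pattern, while the Hessian identity controls its order of vanishing against the quadratic forms of $\Sigma$ and $\Lambda$. Combining these two constraints leaves no room for a genuine singularity unless the curve is a conic, i.e.\ unless $\gamma$ is a geodesic circle --- contrary to assumption. Hence $\{f_{\pm r}=0\}$ is smooth in $\mathbf{C}^3$.

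The hard part will be the second step: transporting the planar identity (\ref{eq1})--(\ref{eq2}) to the curved ambient, since on $\Sigma$ the "center of a Larmor circle" and the equidistant construction are no longer governed by a single Euclidean norm but by the (definite or indefinite) ambient form, and the curves carry an extra coordinate constrained to the quadric. One must verify that the reflection law still linearizes to a projective involution on the relevant pencil of lines, and that the weak-field condition $\beta<k_{\min}$ (and $\beta>1$ for $K=-1$) is exactly what guarantees the equidistants are smooth embedded real curves, so that the local Puiseux analysis near the absolute applies without degeneration.
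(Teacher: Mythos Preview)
The paper does not prove Theorem~\ref{main2}: it is quoted from \cite{BM4}, and Section~4 only surveys it. The closest thing in the paper to ``the paper's own proof'' is Section~5, which carries out in full detail the parallel argument for the two-sided planar magnetic billiard (Theorem~\ref{main}); the text makes clear that \cite{BM3} and \cite{BM4} follow the same template. Judged against that template, your plan has a genuine gap in its second and third steps.

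You are importing the wrong machinery. Identities (\ref{eq1}) and (\ref{eq2}) belong to the \emph{angular billiard} technique for \emph{Birkhoff} (non-magnetic) billiards: they encode a projective involution on tangent lines to the polar-dual curve $\Gamma$, and the factor $(x^2+y^2)^{3m-3}$ in (\ref{eq2}) comes from the denominator $(x^2+y^2)^p$ of the angular-billiard integral, not from any absolute. For magnetic billiards there is no polar duality and no angular billiard in play. As Section~5 shows, the operative ``remarkable equation'' is (\ref{remarkable}) (and its one-sided analogue in \cite{BM3}, \cite{BM4}): one equates values of the polynomial $F$ at two nearby Larmor centers parametrized by the reflection angle $\epsilon$, expands in $\epsilon$, and from the order-$\epsilon^3$ terms extracts a first-order linear ODE for $u(s)=|\nabla\tilde F(\gamma_{+r}(s))|^3$ (equations (\ref{rem2})--(\ref{linear1})). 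Integrating that ODE explicitly yields an algebraic relation of the type (\ref{final}) between $H(\tilde F)$ and $|\nabla\tilde F|^3$ along $\gamma_{+r}$, with \emph{no} appearance of the absolute; it is the Puiseux analysis of \emph{this} relation at a putative singular point that excludes the singularity. Your proposed identity ``of the same shape as (\ref{eq1})'' with the quadratic form of $\Lambda$ replacing $x^2+y^2$, and the projective-involution-on-tangent-lines argument in your step three, simply do not arise for the equidistant curves $\gamma_{\pm r}$ in the magnetic setting; so as written the proposal would not reach the smoothness conclusion.
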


\begin{corollary}\label{all2}
	For any non-circular domain $\Omega$ on $\Sigma$, the magnetic
	billiard inside $\Omega$ is not algebraically integrable for all but
	finitely many values of $\beta$.
\end{corollary}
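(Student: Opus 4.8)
The plan is to deduce this corollary from Theorem \ref{main2} by the same mechanism that produces Corollary \ref{all1} from Theorem \ref{main1}. First I would fix the non-circular domain $\Omega\subset\Sigma$ and suppose, for contradiction, that the magnetic billiard inside $\Omega$ is algebraically integrable for infinitely many values $\beta$ in the admissible range $0<\beta<k_{\min}$ (respectively $1<\beta<k_{\min}$ in the hyperbolic case). By Theorem \ref{main2}, for each such $\beta$ the parallel curves $\tilde\gamma_{\pm r}$ (with $\beta=\cot r$ on the sphere, $\beta=\coth r$ on the hyperboloid) are smooth algebraic curves in $\mathbf C^3$. The strategy is to show that this smoothness can hold for only finitely many $\beta$, which contradicts our assumption.

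The key step is to organize the parallel curves into a single algebraic family. I would write down the equidistant $\tilde\gamma_{r}$ explicitly: a point of $\tilde\gamma_{\pm r}$ is obtained from a point of $\gamma$ by moving geodesic distance $r$ along the normal, which on $\Sigma$ is given by a formula of the form $X(s)\cos r \pm N(s)\sin r$ (sphere) or the analogous hyperbolic-trigonometric expression, where $X(s)$ parametrizes $\gamma$ and $N(s)$ is the unit normal in $\mathbf R^3$. Eliminating the parameter $s$, the defining polynomial of $\tilde\gamma_{\pm r}$ depends polynomially (after the substitution $\cos r, \sin r \leftrightarrow$ rational functions of $u=\tan(r/2)$, or $\cosh r,\sinh r$ in terms of $u=\tanh(r/2)$) on a single parameter. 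Thus the curves $\{\tilde\gamma_{\pm r}\}$ form an algebraic family $\{F_u = 0\}$ over the $u$-line, and the locus of $u$ for which the fiber acquires a singular point is a constructible (in particular algebraic) subset of the parameter line. Since $\gamma$ itself is non-circular, this subset is not everything: for large $r$ the parallel curve develops focal singularities (the evolute of $\gamma$ is non-degenerate precisely because $\gamma$ is non-circular, so the real equidistant already self-intersects or cusps for $r$ near the minimal radius of curvature, forcing a singular point of the complex curve). Hence the bad locus is a proper algebraic subset, therefore finite, and only finitely many admissible $\beta$ can give smooth $\tilde\gamma_{\pm r}$.

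Combining: if infinitely many $\beta$ gave an algebraically integrable billiard, Theorem \ref{main2} would force smooth $\tilde\gamma_{\pm r}$ for infinitely many $\beta$, contradicting the finiteness just established. Therefore algebraic integrability fails for all but finitely many $\beta$, which is the claim.

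The main obstacle I anticipate is making the "family" argument fully rigorous on a curved surface: one must check that the elimination producing $F_u$ really is polynomial in the rationalizing parameter $u$ and that passing to the complexification in $\mathbf C^3$ (as opposed to $\mathbf C^2$ in the planar case) does not introduce spurious components at infinity or on the absolute $\Lambda$ that would confuse the singularity count. A clean way around this is to argue semicontinuity of the singularity locus directly: smoothness of a complete intersection (or of the relevant affine curve) is an open condition in families, so the set of $u$ with smooth fiber is Zariski open; one then only needs a single admissible value of $\beta$ (equivalently $r$) for which the parallel curve is singular — and such a value always exists for non-circular $\gamma$ because the equidistant map degenerates at the focal distances. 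That existence statement, which is essentially the non-vanishing of the geodesic curvature's derivative somewhere on $\gamma$, is the one genuinely geometric input, and it is exactly the place where non-circularity is used.
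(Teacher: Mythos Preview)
The paper does not write out a proof of Corollary \ref{all2}; it states it as an immediate consequence of Theorem \ref{main2}, parallel to Corollary \ref{all1}. The only place the mechanism is spelled out is in the proof of Corollary \ref{all} for the two-sided model: for each regular value $r$ of the curvature-radius function the real equidistant $\gamma_{+r}$ carries an $A_2$-cusp, and by stability of $A_2$ this happens on an open interval of $r$; hence for all but finitely many $r$ the parallel curve is singular, and Theorem \ref{main2} (resp.\ \ref{main}) then excludes integrability for those $\beta$.

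Your proposal assembles the right ingredients --- an algebraic family of equidistants over a rationalized parameter line, constructibility of the singular-fiber locus, and focal degenerations of the equidistant map --- but the semicontinuity step is run in the wrong direction. You argue that smoothness is a Zariski-open condition and that exhibiting \emph{one} singular fiber therefore suffices (``one then only needs a single admissible value of $\beta$ \ldots\ for which the parallel curve is singular''). That yields the opposite of what you want: Zariski open in a one-dimensional base means cofinite or empty, so a single singular fiber only tells you the smooth locus is not the whole line --- it is still cofinite. Likewise, your sentence ``this subset is not everything \ldots\ hence the bad locus is a proper algebraic subset, therefore finite'' has the roles of the smooth and singular loci swapped.

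What is actually needed is that the \emph{singular} locus is Zariski dense in the parameter line, hence (being constructible in dimension one) cofinite. For that a single singular value is not enough; you need an interval's worth. This is precisely what the paper's cusp argument supplies: since $\gamma$ is non-circular, the (geodesic) curvature-radius function is nonconstant, and for every regular value $r$ in its range the equidistant $\gamma_{+r}$ has a genuine $A_2$-cusp; thus the singular-fiber locus contains a real open interval, is Zariski dense, and therefore cofinite. Your focal-singularity observation is exactly the right geometric input, but it must be used to produce a Zariski-dense set of singular parameters, not a single witness.
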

As an example we consider the magnetic billiard inside spherical ellipse and conclude the non-existence of polynomial integral for all
magnitudes of magnetic field:

\vskip03mm

\begin{example}

Let $\Omega$ be the interior of the ellipse on the sphere, i.e. the intersection of the sphere with a quadratic cone
$$\partial\Omega=\left\{\frac{x_1^2}{a^2}+\frac{x_2^2}{b^2}=x_3^2\right\},\quad 0<b<a.$$
The equation of parallel curves for the ellipse is defined by a polynomial $\hat{F}$ of degree eight (see Appendix of \cite{BM4}). The curve on the sphere $\{\hat{F}=0\}$ is singular for arbitrary $a$ and $b$ which imply algebraic non-integrability by Theorem \ref{main2}.
\end{example}

\section{\bf Polynomial integrals for two-sided magnetic billiards}
Let $\Omega$ be a convex domain in the plane bounded by a smooth simple closed  curve $\gamma$.
The magnetic field $\pm\beta$ is assumed to be relatively week $0<\beta<k_{\min}$. In such a case every Larmor circle is transversal to the boundary. We shall assume that after each collision the sign of magnetic field changes to the opposite, so that the Larmor circles
change their orientation from positive to negative (see Fig. \ref{1}).

It is a good idea to think of two copies of the domain $\Omega_1, \Omega _2$ glued along the boundary $\gamma$, and  the magnetic field is $+\beta$ on $\Omega_1$ and is $-\beta$ on $\Omega_2$. Thus altogether we get  a sphere with the magnetic field orthogonal to the surface of the sphere. We shall use subindex 1, 2 for points, indicating the sides of the domain $\Omega$.
We turn now to the definition of polynomial integral of motion of two-sided magnetic billiard.
\begin{definition}\label{def} Let $\Phi_1,\Phi_2:T_1\Omega\rightarrow\mathbf{R}$
	be two functions on the unit tangent bundle which are polynomial in the components of the unit tangent vector $v=(v_1,v_2)$
	$$\Phi_1(x,v)=\sum_{k+l=0}^N
	a^{(1)}_{kl}(x)v_{1}^kv_{2}^l, \quad \Phi_2(x,v)=\sum_{k+l=0}^N
	a^{(2)}_{kl}(x)v_{1}^kv_{2}^l $$
	with coefficients
	continuous up to the boundary. We
	call the pair of functions $\Phi_1,\Phi_2$ a polynomial integral of two-sided magnetic billiard if the
	following conditions hold.
	
	1. $\Phi_1$ are $\Phi_2$ keep constant values on the unit tangent vectors of positive and negative Larmor circles respectively.
	
	2. For the collision point $z\in \gamma$ and any vector $v\in T_z\Omega, |v|=1$
	$$\Phi_1(z,v)=\Phi_2(z,v-2\langle n,v\rangle n),$$
	where $n$ is the inward unit normal to $\partial\Omega$ at $z$.
\end{definition}
It is very clear that two-sided round disc provides an integrable example of the two-sided magnetic billiard:

\begin{example}\label{example}Let $\gamma$ be a unit circle centered at the
	origin, so $n(x)=-x$. Consider two functions
	$$\Phi_1(x,v)=x_1^2+x_2^2+\frac{2}{\beta}(v_1x_2-v_2x_1)=x^2-\frac{2}{\beta}<n,Jv>,$$
	$$\Phi_2(x,v)=x_1^2+x_2^2-\frac{2}{\beta}(v_1x_2-v_2x_1)=x^2+\frac{2}{\beta}<n,Jv>.$$
	It can be easily seen that they satisfy the conditions 1 and 2, thus providing the integral of two-sided billiard in the sense of Definition~\ref{def}.

\end{example}
Our main result for two-sided billiards reads as follows:
\begin{theorem}\label{main}
	Suppose the two-sided magnetic billiard admits a non-constant polynomial integral of motion.
	Then for any point $Q$, all the Puiseux indices (see (\ref {branch})) of all local branches at $Q$ of the curves $\gamma_{\pm r}$ are greater or equal then 2, i.e. the branches cannot be sub-quadratic.
\end{theorem}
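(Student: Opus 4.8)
The plan is to reduce the two-sided problem to the kind of involution identity that was established for ordinary magnetic billiards, and then apply the local analysis of branches at points of the parallel curves. First I would set up the phase space: following the construction sketched in Section~4, the natural phase space of the two-sided billiard is the annulus between $\gamma_{-r}$ and $\gamma_{+r}$, where a point of $\Omega_r$ records the center of the Larmor arc and the orientation records whether we are on the $+\beta$ or $-\beta$ sheet. A polynomial integral $(\Phi_1,\Phi_2)$ in the sense of Definition~\ref{def} gives, exactly as in \cite{bolotin}, \cite{BM3}, a function that is algebraic in the center coordinates and constant along the two families of Larmor circles; condition~2 is the reflection/gluing rule. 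The first step is therefore to show that the curves $\gamma_{\pm r}$ are real algebraic, by the same argument as in Theorem~\ref{main1}: a nonconstant polynomial integral, restricted to the one-parameter family of Larmor circles through a fixed boundary point, forces algebraicity of the envelope.

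Next I would derive the analogue of equation~(\ref{eq1}) for the two-sided model. The magnetic reflection at a point $z\in\gamma$ sends a positively oriented Larmor arc to a negatively oriented one; in the center picture this is precisely a map $\gamma_{+r}\to\gamma_{-r}$ (or, after pushing everything onto one curve via the defining polynomials $f_{\pm r}$, a projective involution on each common tangent line that preserves the intersection divisor with the algebraic curve). Concretely, the level set of the integral is tangent to a Larmor circle of radius $r$ at the center, and the reflection law produces, on the tangent line to $\{f_{\pm r}=0\}$ at such a center, an involution fixing the set of intersection points of that line with the curve — this is the two-sided counterpart of the statement in Section~2 that "on every tangent line acts a projective involution leaving invariant the intersection points." Once this involution identity is in hand, I would localize it at an arbitrary point $Q$ on the completion of $\gamma_{\pm r}$, expand a branch in the Puiseux form (\ref{branch}), $t\mapsto(t^q, ct^p(1+o(1)))$ with $1\le q<p$, and examine the asymptotics of the involution along the corresponding tangent line as $t\to 0$.

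The heart of the argument is the local computation: a projective involution on a line that must permute the intersection points with a branch of Puiseux index $p/q<2$ (sub-quadratic) leads to a contradiction — either the involution degenerates, or it fails to preserve the intersection multiplicities, or the fixed-point structure is incompatible with it being a genuine (nontrivial) involution. This is exactly the mechanism behind Theorem~\ref{th2.8} and Theorem~\ref{th3.3}: a sub-quadratic branch is the obstruction that gets ruled out. I would adapt that local analysis, tracking how the two-sidedness (the sign change of $\beta$) enters — it swaps the roles of $\gamma_{+r}$ and $\gamma_{-r}$, so the involution now intertwines two branches (one on each curve) rather than acting within a single one, but the asymptotic bookkeeping of the cross-ratio of the four relevant points on the tangent line is the same. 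Ruling out $p/q<2$ then gives the claim that every Puiseux index is $\ge 2$.

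The main obstacle I expect is \emph{precisely} this cross-over between the two curves $\gamma_{+r}$ and $\gamma_{-r}$: in the ordinary magnetic case one works with a single algebraic curve and a self-involution of its tangent lines, whereas here the gluing along $\gamma$ couples the two equidistants, and one must be careful that the algebraic identity obtained (the two-sided analogue of (\ref{eq1})) genuinely constrains the branches of \emph{both} curves and not merely of their "union," including the degenerate case $f_{+r}=f_{-r}$ where the two parallel curves lie on the same irreducible component. Handling that case, and verifying that the involution one extracts is nontrivial (so that the sub-quadratic exclusion really applies), is where the real work lies; the rest is a transcription of the plane magnetic-billiard machinery of \cite{BM3} together with the local Puiseux analysis of \cite{g1}.
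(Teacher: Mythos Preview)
Your outline rests on a structural analogy with the angular billiard that does not hold for the two-sided magnetic model, and this is a genuine gap rather than a routine adaptation. In the Birkhoff/angular setting of Section~2 the points $A$ and $B=\mathcal{A}(A)$ lie on a \emph{common tangent line} to $\Gamma$, which is why one obtains a projective involution on that line preserving the intersection divisor, and why the Puiseux analysis of \cite{g1} applies. For the two-sided billiard, write $z=\gamma(s)$, $n=J\dot\gamma(s)$; then the center $P=z+rR_{-\epsilon}n$ is mapped to $Q=z-rR_{\epsilon}n$. As $\epsilon\to 0$ these points tend to $z_+\in\gamma_{+r}$ and $z_-\in\gamma_{-r}$ respectively, and for fixed $z$ the locus $\{P,Q\}$ is a pair of arcs of a circle of radius $r$ about $z$, not a line. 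There is no tangent line to $\{f_{\pm r}=0\}$ on which the reflection acts projectively, so the ``cross-ratio bookkeeping'' you invoke has no carrier; the sentence ``the involution now intertwines two branches \ldots\ but the asymptotic bookkeeping \ldots\ is the same'' is precisely where the argument breaks.

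What the paper actually does is entirely different and purely analytic. From the identity
\[
F\bigl(z_+-r(I-R_{-\epsilon})n\bigr)=F\bigl(z_- +r(I-R_{\epsilon})n\bigr)
\]
one first shows $F|_{\gamma_{\pm r}}=\mathrm{const}$ (so both curves are algebraic) and that the vanishing orders of $F$ along the two components agree. Writing $\tilde F=F^{1/k}=f\cdot g^{1/k}$ and expanding the identity to order $\epsilon^3$ yields, after using $H(\tilde F)=-k|\nabla\tilde F|^3$ on the curves, a first-order linear ODE in $u(s)=|\nabla\tilde F(\gamma_{+r}(s))|^3$; integrating it gives a closed algebraic relation on $\gamma_{+r}$,
\[
2\bigl(2rH(\tilde F)-|\nabla\tilde F|^3\bigr)^2\bigl(rH(\tilde F)-|\nabla\tilde F|^3\bigr)=C\,H(\tilde F)^2,\qquad C\neq 0.
\]
The Puiseux analysis is then elementary order-counting in $t$ along a branch: if $H(\tilde F)\sim t^a$ and $|\nabla\tilde F|^3\sim t^b$, the relation forces $3b=2a$, hence $a-b>0$; but for a branch with index $p/q$ one computes $a-b=(p/q-2)q$, so $p/q>2$. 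No projective involution or Glutsyuk-type local lemma is used (and the Remark after the theorem indicates the authors could \emph{not} push the method to full smoothness, unlike the angular case). Your proposal would need to replace the nonexistent tangent-line involution with this $\epsilon^3$-expansion/ODE step.
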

\begin{remark}
	It is plausible that one can improve the method of proof in order to exclude all singularities. But we couldn't complete the
	analysis.
\end{remark}

We have the immediate corollaries:
\begin{corollary}\label{all}
	For any non-circular domain $\Omega$ in the plane, the two-sided magnetic
	billiard inside $\Omega$ is not algebraically integrable for all but
	finitely many values of $\beta$.
\end{corollary}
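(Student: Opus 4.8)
The plan is to deduce Corollary~\ref{all} from Theorem~\ref{main} by showing that, for a fixed non-circular boundary, the equidistant curves acquire a cusp --- hence a sub-quadratic branch --- for all but finitely many values of $\beta$. I may assume from the outset that $\gamma$ is algebraic: a non-constant polynomial integral forces the equidistants $\gamma_{\pm r}$, and with them $\gamma$ itself, to be algebraic (this is the starting point of the whole approach, exactly as in \cite{BM3}), so if $\gamma$ were transcendental the billiard would fail to be algebraically integrable for every $\beta$ and there would be nothing to prove. Denote by $\tilde\gamma\subset\mathbf{C}P^2$ the complexification of the boundary. By Theorem~\ref{main}, if the two-sided billiard is algebraically integrable for a given $\beta$, then neither $\gamma_{+r}$ nor $\gamma_{-r}$ has a sub-quadratic branch at any point. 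Thus it suffices to exhibit, for all but finitely many $\beta$, a sub-quadratic branch on one of $\gamma_{\pm r}$.

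First I would isolate the mechanism that produces such branches. Parametrizing the boundary and writing the equidistant as the offset map, I get
\[
F(s)=p(s)+r\,n(s),\qquad F'(s)=\bigl(1-r\,k(s)\bigr)\,t(s),
\]
where $t,n$ are the unit tangent and unit normal and $k$ is the curvature; here I have used the Frenet relation $n'=-k\,t$. The offset map degenerates precisely where $k(s)=1/r=\beta$, and at a point where in addition $k'(s)\neq0$ the factor $1-r\,k$ has a simple zero while $t(s)$ stays non-degenerate, so $F$ carries an ordinary cusp, i.e.\ a branch of Puiseux index $3/2<2$. On the real boundary no such degeneration occurs, since the standing assumption $\beta<k_{\min}$ gives $k>\beta$ and hence $1-r\,k<0$ throughout; this is exactly why the real equidistant stays smooth and the billiard is well defined. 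The cusps therefore live at complex points of $\tilde\gamma$ at which $k=\beta$.

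It remains to see that such complex points exist for cofinitely many $\beta$. Since $\gamma$ is algebraic, the curvature extends to a rational function $k$ on $\tilde\gamma$, and $k$ is non-constant precisely because $\gamma$ is non-circular (constant curvature of a real analytic convex curve forces a circle). Viewing $k$ as a non-constant morphism $\tilde\gamma\to\mathbf{C}P^1$, it is surjective with only finitely many critical values; hence for all but finitely many $\beta$ the fibre $\{k=\beta\}$ is non-empty and consists of smooth points of $\tilde\gamma$ at which $k'\neq0$. At any such point the offset $\gamma_{+r}$ or $\gamma_{-r}$ (whichever one corresponds to the center of curvature) carries an ordinary cusp, a sub-quadratic branch. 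By the contrapositive of Theorem~\ref{main}, the two-sided billiard is then not algebraically integrable for these values of $\beta$, which proves the corollary.

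The step I expect to be the main obstacle is the rigorous complexification of the cusp analysis. The unit normal and the arc-length normalization carry square roots, so over $\mathbf{C}$ the offset construction interacts with the isotropic directions and the circular points at infinity, and one must make sure that the putative cusp is a genuine ordinary cusp of the algebraic curve $\{f_{\pm r}=0\}$ rather than an artefact of the parametrization or a point escaping to infinity. I would handle this by carrying out the local Puiseux expansion of $F$ at a complex zero of $1-r\,k$ within the algebraic framework for equidistants set up in \cite{BM3}, checking that the leading terms are of type $(t^2,t^3)$ and that, for $\beta$ outside a finite exceptional set, the cusp point is finite and lies on the minimal component containing $\gamma_{\pm r}$.
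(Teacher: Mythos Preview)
Your argument is correct in outline and close in spirit to the paper's, but the route is a little different. The paper argues on the real side first: for $r$ equal to a regular value of the curvature--radius function of $\gamma$ (so $r\in(\rho_{\min},\rho_{\max})$), the real parallel curve $\gamma_{+r}$ carries genuine $A_2$-cusps; stability of $A_2$ then gives cusps for an open interval of $r$, and an (implicit) algebraic--family argument propagates these cusps to all but finitely many $r$, in particular to the billiard range $r>\rho_{\max}$, where the cusp has migrated to a complex point of $\{f_{+r}=0\}$. You skip the real interval and the stability step and go directly to complex points of $\tilde\gamma$ via the curvature function; this is cleaner once the complexification is in place, and it makes explicit the step the paper leaves tacit.

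One technical correction you should make: the curvature $k$ is \emph{not} a rational function on $\tilde\gamma$, because the formula $k=-H(f)/|\nabla f|^{3}$ carries the odd power $(f_x^2+f_y^2)^{3/2}$. What \emph{is} rational is $\rho^2=|\nabla f|^{6}/H(f)^{2}$. So the honest statement is that $\rho^2$ is a non-constant rational map $\tilde\gamma\to\mathbf{C}P^1$ (non-constant since $\gamma$ is not a circle), surjective with finitely many critical values; for all but finitely many $r$ the fibre $\{\rho^2=r^2\}$ is non-empty, avoids the singular locus of $\tilde\gamma$ and the isotropic locus $\{f_x^2+f_y^2=0\}$, and consists of simple points. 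At any such point one of the two offsets $\gamma_{\pm r}$ has an ordinary $(t^2,t^3)$ cusp, and since Theorem~\ref{main} forbids sub-quadratic branches on \emph{both} $\gamma_{+r}$ and $\gamma_{-r}$, this is enough. Your final paragraph is exactly right about where the remaining work lies: verifying that the local offset, algebraicized, really produces an $A_2$ branch on the curve $\{f_{\pm r}=0\}$ at a finite point, which amounts to excluding the finitely many $r$ for which the fibre meets the bad loci just listed.
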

\begin{proof}
	Indeed, let us choose $r\in (r_{\min}, r_{\max})$, $r=1/\beta$, to be a regular value  the curvature radius function.
	For such $r$ (Fig. \ref{4}) the curve $\gamma_{+r}$ contains  $A_2$-cusps (see \cite{bruce}, Section 7.12). Since $A_2$ is stable singularity, then there is an open interval of $r$ where the parallel curves $\gamma_{+r}$ contain $A_2$-cusps. Therefore, for all but finitely many values of $\beta$ the curves $\gamma_{+r}$  contain sub-quadratic singularities. But, by Theorem \ref{main}, this can not happen for those $\beta$ when the billiard is integrable. This proves the corollary.
	
\end{proof}

\begin{figure}[h]
	\centering
	\includegraphics[width=0.5\linewidth]{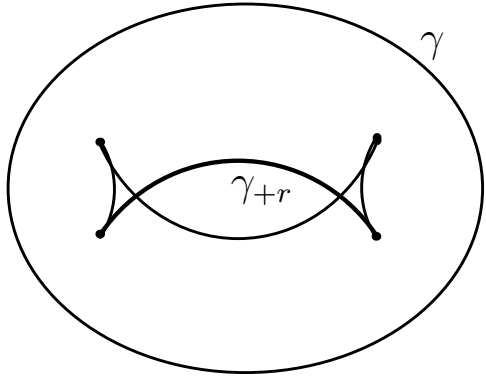}
	\caption{Cusps of the parallel curve.}
	\label{4}
\end{figure}
\vskip05mm

\begin{example} Let $\Omega$ be the interior of the ellipse
	$$
	\frac{x^2}{a^2}+\frac{y^2}{b^2}=1,\quad a>b>0.
	$$
	The equation of the parallel curves for ellipse reads:
	$$
	a^8 (b^4+(r^2-y^2)^2-2 b^2 (r^2+y^2))+b^4 (r^2-x^2)^2 (b^4-2 b^2 (r^2-x^2+y^2)+(x^2+y^2-r^2)^2)
	$$
	$$
	-2
	a^6 (b^6+(r^2-y^2)^2 (r^2+x^2-y^2)-b^4 (r^2-2 x^2+3 y^2)-b^2 (r^4+3 y^2 (x^2-y^2)+
	$$
	$$
	r^2 (3
	x^2+2 y^2)))+
	2 a^2 b^2 (-b^6 (r^2+x^2)-(-r^2+x^2+y^2)^2 (r^4-x^2 y^2-r^2 (x^2+y^2))+
	$$
	$$
	b^4 (r^4-3 x^4+3
	x^2 y^2+r^2 (2 x^2+3 y^2))+b^2 (r^6-2 x^6+x^4 y^2-3 x^2 y^4+r^4(-4 x^2+2 y^2)+
	$$
	$$
	r^2 (5 x^4-3 x^2 y^2-3 y^4)))+
	a^4 (b^8+2 b^6 (r^2+3 x^2-2 y^2)+(r^2-y^2)^2 (-r^2+x^2+y^2)^2-
	$$
	$$
	2 b^4 (3 r^4-3 x^4+5 x^2 y^2-3 y^4+4
	r^2 (x^2+y^2))+2 b^2 (r^6-3 x^4 y^2+x^2 y^4-2 y^6+
	$$
	$$
	2 r^4 (x^2-2 y^2)+r^2 (-3 x^4-3 x^2 y^2+5 y^4)))=0.
	$$
	This curve is irreducible and has $A_2$-cuspidal branches for every $r>0$. For arbitrary $a,b$ the formulas for the singular points are very complicated. We consider, for simplicity, the case $a=2,b=1$. Then the curve has the form
	$$
	(9 r^8-6 r^6 (15+2 x^2+7 y^2)+(x^2+4 y^2-4)^2 (x^4+2 x^2 (y^2-3)+(3+y^2)^2)+
	$$
	$$r^4
	(297-2 x^4+270 y^2+73 y^4+x^2 (62 y^2-90))+
	$$
	$$2 r^2 (2 x^6-x^4 (31+15 y^2)+x^2 (135+70 y^2-45 y^4)-4 (45+45 y^2+31 y^4+7 y^6)))=0.
	$$
	This curve has a singular point $(x_0,y_0)$, where
	$$
	x_0=\sqrt{\frac{16}{3}-4\ 2^{2/3} r^{2/3}+2\ 2^{1/3} r^{4/3}-\frac{r^2}{3}},
	$$
	$$
	y_0=\sqrt{2^{2/3} r^{2/3}-\frac{1}{3}-2\ 2^{1/3} r^{4/3}+\frac{4 r^2}{3}}.
	$$
	By direct calculation one can check that the curve has two $A_2$-cuspidal branches at this point of the form
	$$
	x=x_0+t^2,\qquad y=y_0+b_1t^2+b_2t^3+\dots
	$$
	Hence, for any magnitude of the magnetic field
	$0<\beta<k_{\min}$, the two-sided
	magnetic billiard in the ellipse is not algebraically integrable.
\end{example}

The rest of this Section is devoted to the proof of Theorem \ref{main}.

Larmor circles on $\Omega_1$ and $\Omega_2$ differ by their orientation. Denote $\Omega_r$ the set of all Larmor centers. This set is also has two sides $\Omega_{1 r},$  $\Omega_{2 r}$. The boundaries of these annuli consist of two curves $\gamma_{+r}, \gamma_{-r}$
which are parallel curves to $\gamma$.

For two-sided magnetic billiard the billiard map $\M$ acts from one side of $\Omega _r$ to the other, by the rule: the center of a Larmor circle is mapped to the center of the reflected Larmor circle, which belongs to the other side, because the sign of magnetic field changes.

Let us elaborate how the mapping $\M$ acts. Let $z$ be a point on $\gamma$ and the circle  $C_1$ on $\Omega_1$  incomes to $z$ and reflects to $C_2$ on the other side $\Omega_2$. Let $$z=\gamma(s), \quad z_+= z+rJ\dot \gamma(s)\in\gamma_{+r},\ z_-= z-rJ\dot \gamma(s)\in\gamma_{-r} .$$
Then for the center $P_1$  of the circle $C_1$ we have:
$$
P_1=z_+-rJ\dot \gamma(s)+rR_{-\epsilon}J\dot \gamma(s),
$$
where $R_{\epsilon}$ is counterclockwise rotation by $\epsilon$, and $\epsilon$ is the angle between $C_1$ and $\dot\gamma$ at $z$.
Then the point $Q_2:=\M(P_1)$ is obtained by the formula:
$$
Q_2=z_+-rJ\dot \gamma(s)-rR_{\epsilon}J\dot \gamma(s).
$$
Where indices 1, 2 for the points $P,Q$ indicates their side (see Fig. \ref{2}).

One can see that two-sided magnetic billiard map is symplectic with respect to the standard symplectic form (area form).
Remarkably,
\begin{equation}\label{M}
\M(P_1)=Q_2 \Rightarrow\M(Q_1)=P_2.
\end{equation}

Consider now  integrable two-sided magnetic billiard having pair of integrals $\Phi_1, \Phi_2$. 
We introduce the mapping $\mathcal {L}_i:\Omega_i\rightarrow\Omega_{i r}$ which assigns to $(x,v)$ the center of the  Larmor circle passing through $(x,v)$ with a given orientation.
Moreover, for the function $\Phi_i$
we define the function $F_i:\Omega_{i r} \rightarrow \mathbf{R}$ by the rule:
$$ F_i\circ\mathcal L_i=\Phi_i.$$

It then follows from Theorem 1.2 of \cite{BM3} that the functions $F_i$ are, in fact, polynomials.
Moreover, since $\Phi_i$ are integrals of the two-sided billiard, we have
\begin{equation}\label{F}
F_1(P_1)=F_2(Q_2), \quad F_1(Q_1)=F_2(P_2)
\end{equation}
\begin{remark}
	It follows from (\ref{F}) that we may assume without loss of generality that $F_1=F_2=:F$. Since one can pass to
	$(F_1+F_2)$ or to $(F_1-F_2)^2$ .
\end{remark}
So we get from (\ref{F}) a remarkable equations on the polynomial functions $F$:
\begin{equation}\label{remarkable}
F(z_+-rJ\dot \gamma(s)+rR_{-\epsilon}J\dot \gamma(s))=F(z_+-rJ\dot \gamma(s)-rR_{\epsilon}J\dot \gamma(s));
\end{equation}

\begin{figure}
	\centering
	\includegraphics[width=0.5\linewidth]{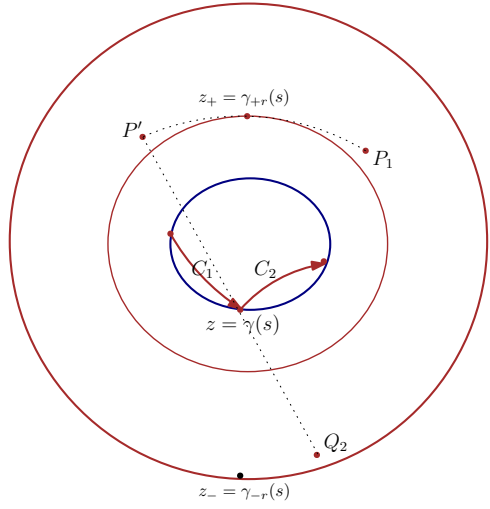}
	\caption{The point $P$, center of the circle $C_1$  is mapped to $Q$, the center of $C_2$.}
	\label{2}
\end{figure}

In particular for $\epsilon=0$ we get
\begin{equation}\label{x}
F(z_+)=F(z_-).
\end{equation}
\begin{proposition}\label{prop}
	Polynomial $F$ keeps constant values on both boundaries:
	$$F|_{\gamma_{\pm r}}=c.$$
\end{proposition}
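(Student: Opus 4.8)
The plan is to extract the statement from the two functional identities already established, the main relation (\ref{remarkable}) and its specialization (\ref{x}), by differentiating them in the two free parameters $\epsilon$ and $s$ and combining the resulting linear conditions on the gradient $\nabla F$.

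First I would differentiate (\ref{remarkable}) in $\epsilon$ at $\epsilon=0$. Since $z_+-rJ\dot\gamma(s)=\gamma(s)$, the arguments of $F$ in (\ref{remarkable}) are $\gamma(s)+rR_{-\epsilon}J\dot\gamma(s)$ on the left and $\gamma(s)-rR_{\epsilon}J\dot\gamma(s)$ on the right, both reducing to $z_{\pm}$ at $\epsilon=0$. Using $\frac{d}{d\epsilon}R_{\pm\epsilon}\big|_{\epsilon=0}=\pm J$ together with $J^{2}\dot\gamma=-\dot\gamma$, one checks that both argument-derivatives at $\epsilon=0$ equal $r\dot\gamma(s)$, so differentiation of (\ref{remarkable}) produces
$$
\langle\nabla F(z_+),\dot\gamma(s)\rangle=\langle\nabla F(z_-),\dot\gamma(s)\rangle=:A(s).
$$

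Next I would differentiate (\ref{x}) in $s$. Writing $z_\pm(s)=\gamma(s)\pm rJ\dot\gamma(s)$ and using the Frenet equation $\ddot\gamma=k\,J\dot\gamma$ (the sign convention plays no role), we get $J\ddot\gamma=-k\dot\gamma$ and hence $\dot z_\pm(s)=(1\mp rk(s))\dot\gamma(s)$; differentiating (\ref{x}) then gives $(1-rk)\langle\nabla F(z_+),\dot\gamma\rangle=(1+rk)\langle\nabla F(z_-),\dot\gamma\rangle$. Inserting the previous display yields $2rk\,A(s)=0$, and since $r=1/\beta>0$ while $k\ge k_{\min}>0$ along $\gamma$, we conclude $A(s)\equiv 0$. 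Consequently $\frac{d}{ds}F(z_\pm(s))=\langle\nabla F(z_\pm),\dot z_\pm\rangle=(1\mp rk)A(s)=0$, so $s\mapsto F(z_\pm(s))$ is constant on the connected parameter circle; thus $F$ is constant along each of $\gamma_{+r}$ and $\gamma_{-r}$, and (\ref{x}) forces the two constants to coincide, giving $F|_{\gamma_{\pm r}}=c$.

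The only delicate point, and the step I expect to require the most care, is the legitimacy of differentiating (\ref{remarkable}) at $\epsilon=0$, since a priori that identity is guaranteed only for genuine collision angles $\epsilon\in(0,\pi)$, making $\epsilon=0$ a boundary value. This is handled by noting that both sides of (\ref{remarkable}) are real-analytic in $\epsilon$ --- polynomials in $\cos\epsilon,\sin\epsilon$ with coefficients depending on $s$ --- so the identity persists across $\epsilon=0$ and the one-sided derivative there is the relevant analytic one; alternatively one includes the limiting tangential configuration, admissible precisely because $\beta<k_{\min}$. Beyond this, the argument is a short computation with plane rotations and the Frenet formulas.
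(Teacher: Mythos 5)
Your proof is correct and follows essentially the same route as the paper: differentiating the remarkable identity in $\epsilon$ at $\epsilon=0$ and in $s$ (your differentiation of (\ref{x}) is exactly the paper's $s$-derivative of (\ref{remarkable}) at $\epsilon=0$), then solving the resulting $2\times2$ linear system using $k>0$ to conclude the tangential derivatives vanish, with (\ref{x}) identifying the two constants. The only cosmetic differences are that you solve the system by substitution rather than by the determinant criterion and add a (harmless, reasonable) remark on analyticity in $\epsilon$.
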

\begin{remark}\label{r}
	Replacing $F$ with $(F-c)$ we shall assume from now on that $F$ vanishes on both boundaries.
\end{remark}
\begin{corollary}
	The curves $\gamma_{\pm r}$, and hence also $\gamma$ are algebraic curves.
\end{corollary}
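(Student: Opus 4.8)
\medskip
\noindent\textbf{Proof proposal.} The plan is to deduce Proposition~\ref{prop} purely by differentiating the two identities (\ref{remarkable}) and (\ref{x}) --- once in the reflection angle $\epsilon$ at $\epsilon=0$, and once in the arclength parameter $s$ --- and then to read off the Corollary. Note first that both sides of (\ref{remarkable}) are polynomial in $F$ and in $\cos\epsilon,\sin\epsilon$, hence real-analytic in $\epsilon$; since they agree for $\epsilon$ in a nonempty open interval of admissible reflection angles, they agree for all real $\epsilon$, and in particular (\ref{remarkable}) may be differentiated at $\epsilon=0$.

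First I would rewrite (\ref{remarkable}) transparently: putting $w:=J\dot\gamma(s)$ and using $z_{+}-rJ\dot\gamma(s)=z$, it becomes
\[
F\bigl(z+rR_{-\epsilon}w\bigr)=F\bigl(z-rR_{\epsilon}w\bigr).
\]
The derivatives at $\epsilon=0$ of the rotation families $R_{-\epsilon}$ and $-R_{\epsilon}$ both equal $-J$, so the $\epsilon$-derivatives at $\epsilon=0$ of the arguments of $F$ on the two sides of this display coincide and both equal $-rJw=-rJ\bigl(J\dot\gamma(s)\bigr)=-rJ^{2}\dot\gamma(s)=r\dot\gamma(s)$. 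Differentiating the display at $\epsilon=0$, and noting that the two arguments there reduce at $\epsilon=0$ to $z+rw=z_{+}$ and $z-rw=z_{-}$, we obtain
\[
\bigl\langle \nabla F(z_{+}),\dot\gamma(s)\bigr\rangle=\bigl\langle \nabla F(z_{-}),\dot\gamma(s)\bigr\rangle=:u(s).
\]

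Next I would bring in (\ref{x}). The curves $\gamma_{\pm r}$ are parametrised by $s\mapsto z_{\pm}(s)=\gamma(s)\pm rJ\dot\gamma(s)$, and the Frenet relation $\frac{d}{ds}\bigl(J\dot\gamma\bigr)=-\kappa\dot\gamma$ (with $\kappa$ the signed curvature, $|\kappa|=k$) gives $\frac{d}{ds}z_{\pm}=(1\mp r\kappa)\dot\gamma$; the weak-field assumption $rk>1$ makes both factors $1\mp r\kappa$ nonzero and, in particular, $1-r\kappa\neq 1+r\kappa$. Hence $\frac{d}{ds}F\bigl(z_{\pm}(s)\bigr)=(1\mp r\kappa(s))\,u(s)$. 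But (\ref{x}) asserts $F(z_{+}(s))\equiv F(z_{-}(s))$, so the two $s$-derivatives are equal, $(1-r\kappa)u=(1+r\kappa)u$, which forces $u\equiv 0$ because $r\kappa\neq 0$. Therefore $\frac{d}{ds}F\bigl(z_{\pm}(s)\bigr)\equiv 0$: $F$ is constant along each of $\gamma_{+r}$ and $\gamma_{-r}$, and by (\ref{x}) the two constants agree. This is Proposition~\ref{prop}, and the Corollary follows at once: by Remark~\ref{r} we may take $F$ to vanish on $\gamma_{\pm r}$, and $F$ is not the zero polynomial (else the integral would be constant), so $\gamma_{\pm r}$ lie on the affine algebraic curve $\{F=0\}\subset\mathbf{C}^{2}$; and $\gamma$, being the parallel curve at distance $r$ of $\gamma_{+r}$, is algebraic too, parallel curves of algebraic curves being algebraic.

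I do not expect a genuine obstacle here; the one point that must be got right --- and that I regard as the crux --- is the coincidence, at $\epsilon=0$, of the $\epsilon$-derivatives of the two arguments of $F$ in (\ref{remarkable}). It is precisely this that links $\langle\nabla F(z_{+}),\dot\gamma\rangle$ to $\langle\nabla F(z_{-}),\dot\gamma\rangle$; after that, the fact that the two parallel curves $\gamma_{\pm r}$ carry \emph{different} stretch factors $1\mp r\kappa$ in the common tangent direction $\dot\gamma$ forces the tangential derivative of $F$ to vanish on both. The remaining points are routine: the analytic continuation in $\epsilon$ used above, and the inequalities $r\kappa\neq 0$, $1\mp r\kappa\neq 0$, which come directly from the weak-field hypothesis $\beta<k_{\min}$.
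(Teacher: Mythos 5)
Your argument is correct and is essentially the paper's own proof of Proposition~\ref{prop}: your $\epsilon$-derivative of (\ref{remarkable}) at $\epsilon=0$ is exactly (\ref{DF1}), and differentiating (\ref{x}) in $s$ is exactly (\ref{DF2}), after which eliminating $u$ (you by direct substitution, the paper via the nonvanishing $2\times2$ determinant) gives $DF|_{z_\pm}(\dot\gamma)=0$ and hence constancy, equality of the constants by (\ref{x}), and algebraicity of $\gamma_{\pm r}$ and of their equidistant $\gamma$. Your extra remarks (analytic continuation in $\epsilon$ to justify evaluating at $\epsilon=0$, and non-constancy of $F$) are correct refinements of steps the paper leaves implicit.
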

\begin{proof}[Proof of Proposition \ref{prop}]
	Differentiating (\ref{remarkable}) with respect to $\epsilon$	at $\epsilon=0$ we get
	\begin{equation}\label{DF1}
	DF|_{z_+}(r\dot\gamma)=DF|_{z_-}(r\dot\gamma).
	\end{equation}
	In addition, differentiating (\ref{remarkable}) with respect to $s$ at $\epsilon=0$ we get, using Frenet formulas
	\begin{equation}\label{DF2}
	DF|_{z_+}((1-k(s)r)\dot\gamma)=DF|_{z_-}((1+k(s)r)\dot\gamma).
	\end{equation}
	Since the determinant of the matrix
	$$
	\begin{pmatrix}
	r & r   \\
	(1-kr) & (1+kr)
	\end{pmatrix}$$
	does not vanish, we conclude that the linear equations (\ref{DF1}), (\ref{DF2}) have only trivial solution:
	$$DF|_{z_+}(\dot\gamma)=DF|_{z_-}(\dot\gamma)=0.$$
	Since the vectors $\dot\gamma(s)$ and $\dot\gamma_{\pm r}(s)$  are proportional,  then $F$ must be constant on each boundary curve. Moreover, the corresponding constants for both boundaries must be equal by (\ref{x}).
\end{proof}

Let us denote by $f_+,\ f_-$ minimal defining polynomials of algebraic curves $\gamma_{\pm r}$ respectively. There are two possibilities:

Case 1. $f_+=f_-$, i.e. the curves $\gamma_{\pm r}$ belong to the same component.
In this case we have $$F=(f_+)^k\ g_1.
$$

Case 2. $f_+\neq f_-$, i.e.   the curves $\gamma_{\pm r}$ belong to different components.
Then
$$
F=(f_+)^k(f_-)^l\ g_1.
$$
In both cases polynomial $g_1$ does not vanish on $\gamma_{\pm r}$ except  finitely many points.
Moreover in the second case we claim:
\begin{proposition}\label{kl}
	In Case 2  the numbers $k$ and $l$ are equal.
\end{proposition}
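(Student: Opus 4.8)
The plan is to exploit the symmetry inherent in equation (\ref{remarkable}) together with the fact, established in Proposition \ref{prop}, that $F$ vanishes to order $k$ on $\gamma_{+r}$ and to order $l$ on $\gamma_{-r}$. The key geometric observation is that for a fixed boundary point $z=\gamma(s)$, varying the collision angle $\epsilon$ makes the Larmor center $P_1 = z_+ - rJ\dot\gamma(s) + rR_{-\epsilon}J\dot\gamma(s)$ trace a circle of radius $r$ centered at $z_+$, passing through $z$ at $\epsilon=0$; likewise $Q_2 = z_+ - rJ\dot\gamma(s) - rR_\epsilon J\dot\gamma(s)$ traces a circle of radius $r$ centered at $z_+$, and these two arcs are reflections of one another across the line through $z_+$ and $z$. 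As $\epsilon\to 0$ the point $P_1$ approaches $z\in\gamma$, hence approaches neither $\gamma_{+r}$ nor $\gamma_{-r}$ generically; but by symmetry (\ref{M}), $\M(Q_1)=P_2$, so the roles of $z_+$ and $z_-$ are interchanged in the companion equation. This interchange is what will force $k=l$.

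First I would write the companion of (\ref{remarkable}) coming from the second relation in (\ref{F}), namely $F(Q_1)=F(P_2)$ with $Q_1$ and $P_2$ now built from $z_-$ in place of $z_+$:
\begin{equation*}
F(z_- + rJ\dot\gamma(s) - rR_{\epsilon}J\dot\gamma(s)) = F(z_- + rJ\dot\gamma(s) + rR_{-\epsilon}J\dot\gamma(s)).
\end{equation*}
Next, I would analyze the order of vanishing of both sides of (\ref{remarkable}) as a function of $\epsilon$ near a value $\epsilon_0$ for which the argument on the left lands on $\gamma_{+r}$. Concretely: as $\epsilon$ ranges over a small interval, the curve $\epsilon\mapsto P_1(\epsilon)$ is a genuine arc of a circle of radius $r$ about $z_+$, and since $z_+\in\gamma_{+r}$ while $\gamma_{+r}$ is itself the $r$-equidistant of $\gamma$, this circle is internally tangent to $\gamma_{+r}$ at the point diametrically opposite to $z$ — i.e. at $z_+ - 2rJ\dot\gamma(s)\cdot(\text{appropriate sign})$, which is exactly a point of $\gamma_{-r}$ reached when $\epsilon=\pi$, but more usefully, generic points of this arc are smooth points lying off both $\gamma_{\pm r}$. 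The substantive computation is to identify, along the arc $P_1(\epsilon)$, precisely which values of $\epsilon$ put $P_1$ on $\gamma_{+r}$ versus on $\gamma_{-r}$, and to do the same for $Q_2(\epsilon)$; by the reflection symmetry of the two arcs about the line $z_+z$, the $\gamma_{+r}$-hits of one arc correspond to the $\gamma_{-r}$-hits of the other. Matching the orders of zero of $F = (f_+)^k(f_-)^l g_1$ at corresponding points on the two sides of (\ref{remarkable}) then yields $k=l$ directly: the left side acquires a factor vanishing to order $k$ (from an $f_+$-hit) exactly where the right side acquires a factor vanishing to order $l$ (from an $f_-$-hit), and vice versa, and since (\ref{remarkable}) is an identity in $\epsilon$ these orders must agree.

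The main obstacle I anticipate is making the hit-matching argument rigorous near $\epsilon=0$, where $P_1$ and $Q_2$ both degenerate to the boundary point $z\in\gamma$ rather than to a point of $\gamma_{\pm r}$, so one cannot simply compare leading coefficients at $\epsilon=0$; one must instead pick a generic $z$ and a generic nonzero $\epsilon_0$ at which $P_1(\epsilon_0)\in\gamma_{+r}$ transversally, check that at such an $\epsilon_0$ the mirror point $Q_2(\epsilon_0)$ lies on $\gamma_{-r}$ (this is where the reflection symmetry and the two-sided structure (\ref{M}) must be invoked carefully), and confirm $g_1$ does not interfere (it vanishes on $\gamma_{\pm r}$ at only finitely many points, so a generic choice avoids them). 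A clean way to package this, avoiding case-by-case angle chasing, is to differentiate (\ref{remarkable}) in $\epsilon$ the appropriate number of times at $\epsilon_0$ — the first nonvanishing derivative on the left is of order $k$, on the right of order $l$ — but one should double-check that the arcs $P_1(\epsilon)$ and $Q_2(\epsilon)$ are transversal to $\gamma_{+r}$ and $\gamma_{-r}$ respectively at the chosen point, which holds for generic $s$ under the standing assumption $\beta < k_{\min}$ guaranteeing the equidistants are smooth strictly convex curves near such points.
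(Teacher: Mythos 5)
Your proposed mechanism does not get off the ground, because the ``hits'' you want to compare do not exist. First, two geometric slips: since $z_+=z+rJ\dot\gamma(s)$, the center $P_1(\epsilon)=z+rR_{-\epsilon}J\dot\gamma(s)$ traces a circle of radius $r$ centered at $z$ (not at $z_+$), passing through $z_+$ at $\epsilon=0$, and $Q_2(\epsilon)=z-rR_{\epsilon}J\dot\gamma(s)$ is its mirror image across the \emph{tangent} line to $\gamma$ at $z$, not across the line through $z$ and $z_+$. More importantly, under the standing hypothesis $0<\beta<k_{\min}$ a circle of radius $r$ centered on $\gamma_{+r}$ (resp.\ $\gamma_{-r}$) is tangent to $\gamma$ and meets it at that single tangency point; since by construction the Larmor circle centered at $P_1(\epsilon)$ passes through $z$, we can have $P_1(\epsilon)\in\gamma_{\pm r}$ only when the tangency point is $z$ itself, i.e.\ only at the degenerate parameters $\epsilon=0$ (where $P_1=z_+$, $Q_2=z_-$) and $\epsilon=\pi$. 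For all collision angles $\epsilon\in(0,\pi)$ --- the only range on which (\ref{remarkable}) is established --- both centers stay strictly inside the annulus $\Omega_r$, so there is no ``generic nonzero $\epsilon_0$ with a transversal hit of $\gamma_{+r}$'', and the order-of-vanishing comparison you propose has nothing to bite on. Even setting this aside, the claimed correspondence between $\gamma_{+r}$-hits of one arc and $\gamma_{-r}$-hits of the other is unjustified: the reflection across the tangent line at $z$ is a symmetry of the two arcs, but it does not map the equidistant $\gamma_{+r}$ to $\gamma_{-r}$ for a general $\gamma$, so no hit-matching follows from it.

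The paper's proof works precisely at the degenerate limit you flagged as the obstacle, and extracts the information you were hoping to get from transversal hits by an asymptotic comparison instead. Assuming $l>k$, one sets $\tilde F=F^{1/k}$ (still satisfying (\ref{F})), takes $P=P_1(\epsilon)$, $Q=Q_2(\epsilon)$ and nearby points $X\in\gamma_{+r}$, $Y\in\gamma_{-r}$, notes $|P-X|,\,|Q-Y|=O(\epsilon^2)$ with $|P-X|/|Q-Y|\to\rho_-/\rho_+$, and uses the mean value theorem together with $\tilde F(X)=\tilde F(Y)=0$ and $\tilde F(P)=\tilde F(Q)$ to obtain
\begin{equation*}
\frac{\left|\nabla\tilde F(z_+)\right|}{\left|\nabla\tilde F(z_-)\right|}=\frac{\rho_+}{\rho_-},
\end{equation*}
so the two gradients vanish or not simultaneously; but $\tilde F=f_+\,f_-^{l/k}\,g_1^{1/k}$ has nonvanishing gradient at generic points of $\gamma_{+r}$ and vanishing gradient on $\gamma_{-r}$ when $l/k>1$, a contradiction. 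If you want to salvage your line of reasoning, you would have to replace the hit-matching by exactly this kind of quantitative comparison of the rates at which both sides of (\ref{remarkable}) vanish as $\epsilon\to 0$, which is in substance the paper's argument.
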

\begin{proof}
	If on the contrary $l>k$, we introduce the function $$\tilde F=F^\frac{1}{k},$$ which is at least $C^1$-smooth and satisfies (\ref{F}) together with $F$. Take the points $P,Q,\M (P)=Q$, and consider the points $X$ and $Y$ on the curves $\gamma_{+r}, \gamma_{-r}$ close to $P,Q$ as it is shown on the Fig. \ref{3}.
	Then the distances  $|P-X| $ and $|Q-Y|$ are of order $\epsilon^2$ and we have by mean value theorem for $\tilde F$ :
	$$
	\lim\limits_{\epsilon\rightarrow 0}\frac{|\tilde F(P)-\tilde F(X)|}{|P-X|}=|\nabla\tilde F(z_+)|,\  \lim\limits_{\epsilon\rightarrow 0}\frac{|\tilde F(Q)-\tilde F(Y)|}{|Q-Y|}=|\nabla\tilde F(z_-)|.
	$$
	Moreover, it is easy to compute
	$$
	\lim\limits_{\epsilon\rightarrow 0}\frac{|P-X|}{|Q-Y|}=\frac{\rho_-}{\rho_+}.
	$$
	Here $$\rho_+=\frac{1}{k_+}=r-\rho, \rho_-=\frac{1}{k_-}=r+\rho$$ are
	the curvature radii of $\gamma_{+r},\gamma_{-r}$ expressed via curvature radius of $\gamma$ (see below).
	Hence using (\ref{F}) for $\tilde F$  we get :
	\begin{equation}\label{nabla}
	\frac{|\nabla\tilde F(z_+)|}{|\nabla\tilde F(z_-)|}=\frac{\rho_+}{\rho_-}.
	\end{equation}
	Since $r>\rho$, then $\rho_+, \rho_-$ are finite and different from 0, then the gradients of $\tilde F$ at the points $z_{\pm}$ vanish or not
	simultaneously. But on the other hand we have:
	$$
	\tilde F=F^{\frac{1}{k}}=f_+f_-^{\frac{l}{k}}g^{\frac{1}{k}}.
	$$
	Thus $|\nabla\tilde F(z_+)|$ does not vanish but $|\nabla\tilde F(z_-)|$ does, since  $l/k>1$. Contradiction.
\end{proof}
\begin{remark}
	In Case 1 formula (\ref{nabla}) also holds true for the function $\tilde F=F^\frac{1}{k}$  and the proof is verbatim.
\end{remark}

\subsection {Differential Geometric computations}Denote by $\rho, \rho_+, \rho_-$ the curvature radii of the curves $\gamma$, $\gamma_{+r},\gamma_{-r}$ respectively.
It then follows from the properties of parallel curves that curvature radii are related at the corresponding points:
$$\rho_+=\frac{1}{k_+}=r-\rho,\  \rho_-=\frac{1}{k_-}=r+\rho.$$
It will be important that

$$
0<\rho_+<r<\rho_-.
$$

We choose $s$ to be the arc length parameter of $\gamma$ according with the counter clockwise orientation.
Then we have
\begin{equation}\label{dot}
\dot\gamma_{+r}=\frac{d}{ds}(\gamma+rJ\dot\gamma)=(1-kr)\dot\gamma=-\frac{\rho_+}{\rho}\dot\gamma,
\end{equation}
$$
\dot\gamma_{-r}=\frac{d}{ds}(\gamma-rJ\dot\gamma)=(1+kr)\dot\gamma=\frac{\rho_-}{\rho}\dot\gamma.
$$

Next we recall the formulas for the curvature $k$ of the non-singular curve defined
implicitly by $\{f=0\}$ with respect to the normal $\nu=(f_x,f_y)$:
\begin{equation}\label{k}
k=-\frac{H(f)}{|\nabla f|^3}.
\end{equation}
Indeed,
positive tangent vector corresponding to $\nu$ is $v=\dot\gamma=(f_y,-f_x)$. Differentiate with respect to $s$ the identity $<\nabla f,\dot\gamma>=0 $ we get:
$$
<\nabla f,\ddot\gamma>+<D^2f\dot\gamma,\dot\gamma>=0,
$$
which together with Frenet formulas gives
exactly (\ref{k}).
\subsection{Terms of order $\epsilon^3$ in the remarkable equation}

Denote by $$n(s)=J\dot\gamma(s).$$
Equation (\ref{remarkable})  can be written in the following form:
\begin{equation}\label{remarkable1}
F(z_+-r(I-R_{-\epsilon})n)=F(z_-+r(I-R_{\epsilon})n);
\end{equation}
\begin{figure}[h]
	\centering
	\includegraphics[width=0.5\linewidth]{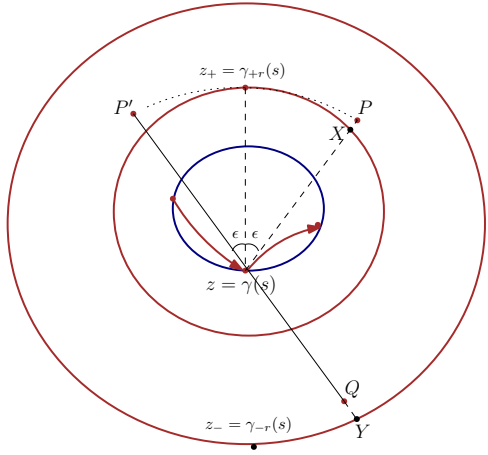}
	\caption{Point $P$ is mapped to $Q$.}
	\label{3}
\end{figure}

In the sequel we write equation (\ref{remarkable1}) for $$\tilde F=F^{\frac{1}{k}}=f\cdot g^\frac{1}{k}$$ in both Cases 1, 2. Where we write  $f:=f_+$ in Case 1 and $f:=f_+\cdot f_-$ in Case 2. Also we use the fact that   $\nabla f$ does not vanish on $\gamma_{+r},\gamma_{-r}$ except for finitely many points. Moreover, it is important that the normal $n(s)$ is given by: $$n(s)=\frac{\nabla \tilde F}{|\nabla \tilde F|}(\pm z)$$ for both points  $z_+=\gamma_{+r}(s)$ and $z_-=\gamma_{-r}(s)$. Using this we can rewrite (\ref{remarkable1}):

\begin{equation}\label{rem1} \tilde F\left(
x_+-r\frac{\tilde F_x(1-\cos\epsilon)-\tilde F_y\sin\epsilon}{|\nabla \tilde F|},
y_+-r\frac{\tilde F_y(1-\cos\epsilon)+\tilde F_x\sin\epsilon}{|\nabla \tilde F|} \right)=
\end{equation}
$$=\tilde F\left(
x_-+r\frac{\tilde F_x(1-\cos\epsilon)+\tilde F_y\sin\epsilon}{|\nabla \tilde F|},
y_-+r\frac{\tilde F_y(1-\cos\epsilon)-\tilde F_x\sin\epsilon}{|\nabla \tilde F|} \right).
$$
Notice that LHS is evaluated close to the point $z_+=(x_+,y_+)=\gamma_{+r}(s)$ while RHS close to the point $z_-=(x_-,y_-)=\gamma_{-r}(s)$.

The
next step is to expand equation (\ref{rem1}) in power series in
$\epsilon.$ Equating the terms of order $\epsilon^3$ on both sides we get:
\begin{equation}\label{rem2}
\frac{r^3}{6|\nabla \tilde F|^3}(\tilde F_{xxx}\tilde F_y^3-3\tilde F_{xxy}\tilde F_y^2\tilde F_x+3\tilde F_{xyy}\tilde F_y\tilde F_x^2-\tilde F_{yyy}\tilde F_x^3)\,-
\end{equation}
$$
\frac{r^2}{2|\nabla \tilde F|^2}(\tilde F_{xx}\tilde F_x\tilde F_y+\tilde F_{xy}(\tilde F_y^2-\tilde F_x^2)-\tilde F_{yy}\tilde F_x\tilde F_y)=
$$
$$=\frac{r^3}{6|\nabla \tilde F|^3}(\tilde F_{xxx}\tilde F_y^3-3\tilde F_{xxy}\tilde F_y^2\tilde F_x+3\tilde F_{xyy}\tilde F_y\tilde F_x^2-\tilde F_{yyy}\tilde F_x^3)\,+$$
$$
\frac{r^2}{2|\nabla \tilde F|^2}(\tilde F_{xx}\tilde F_x\tilde F_y+\tilde F_{xy}(\tilde F_y^2-\tilde F_x^2)-\tilde F_{yy}\tilde F_x\tilde F_y).
$$
Where LHS is computed at the point $z_+=(x_+,y_+)=\gamma_{+r}(s)$ while RHS-at the point $z_-=(x_-,y_-)=\gamma_{-r}(s)$.

Introduce the vector field $v=(\tilde F_y,-\tilde F_x)$. Equation (\ref{rem2}) can be written as follows:
\begin{equation}\label{rem3}
\frac{1}{|\nabla \tilde F(\gamma_{+r}(s))|^3}L_{v(\gamma_{+r}(s))}\left(H(\tilde F)-\beta|\nabla  \tilde F|^3\right)=
\end{equation}
$$
\frac{1}{|\nabla \tilde F(\gamma_{-r}(s))|^3}L_{v(\gamma_{-r}(s))}\left(H(\tilde F)+\beta|\nabla\tilde F|^3\right),
$$
where we used $H(\tilde F)$ for the affine Hessian.

It is convenient to pass to differentiation with respect to $s$.
We compute using (\ref{dot}):
$$
v(\gamma_{+r}(s))=-|\nabla \tilde F(\gamma_{+r}(s))|\dot\gamma(s)=-|\nabla \tilde F(\gamma_{+r}(s))|\frac{\rho}{\rho_+}\dot\gamma_{+r}(s),
$$
$$
v(\gamma_{-r}(s))=|\nabla \tilde F(\gamma_{-r}(s))|\dot\gamma(s)=|\nabla \tilde F(\gamma_{-r}(s))|\frac{\rho}{\rho_-}\dot\gamma_{-r}(s).
$$
Then substituting these formulas into (\ref{rem3}) we get:
\begin{equation}\label{rem4}
-\frac{\rho}{\rho_+|\nabla \tilde F(\gamma_{+r}(s))|^2}\frac{d}{ds}|_{\gamma_{+r}(s)}\left[\left(H(\tilde F)-\beta|\nabla \tilde F|^3\right)(\gamma_{+r}(s))\right]=
\end{equation}
$$
\frac{\rho}{\rho_-|\nabla \tilde F(\gamma_{-r}(s))|^2}\frac{d}{ds}|_{\gamma_{-r}(s)}\left[\left(H(\tilde F)+\beta|\nabla \tilde F|^3\right)(\gamma_{-r}(s))\right].
$$
Using (\ref{k}) we get from (\ref{rem4}):
\begin{equation}\label{rem5}
-\frac{\rho}{\rho_+|\nabla \tilde F(\gamma_{+r}(s))|^2}\frac{d}{ds}|_{\gamma_{+r}(s)}\left[|\nabla \tilde F|^3(\gamma_{+r}(s))\left(k_+(s)-\beta\right)\right]=
\end{equation}
$$
\frac{\rho}{\rho_-|\nabla \tilde F(\gamma_{-r}(s))|^2}\frac{d}{ds}|_{\gamma_{-r}(s)}\left[|\nabla \tilde F|^3(\gamma_{-r}(s))\left(-k_-(s)+\beta\right)\right].
$$
Let us introduce the function $u(s):=|\nabla \tilde F(\gamma_{+r}(s))|^3 $. Then by (\ref{nabla}) (see Remark 3) we have $$|\nabla \tilde F(\gamma_{-r}(s))|^3 =
u(s)\frac{\rho_-^3(s)}{\rho_+^3(s)}.$$
Substituting this into (\ref{rem5}) we get a linear differential equation on $u$:
\begin{equation}\label{linear}
\frac{d}{ds}\left(u(s)\left(\frac{1}{\rho_+(s)}-\frac{1}{r}\right)\right)-\frac{\rho_+^3(s)}{\rho_-^3(s)}\frac{d}{ds}\left(u(s)\frac{\rho_-^3(s)}{\rho_+^3(s)}\left(\frac{1}{\rho_-}-\frac{1}{r}\right)\right)=0
\end{equation}
Equation (\ref{linear}) can be written in the form:
\begin{equation}\label{linear1}
A(s)u'(s)+B(s)u(s)=0,
\end{equation}
where $A,B$ can be computed:
$$
A(s)=\left(\frac{1}{\rho_+(s)}-\frac{1}{r}\right)-\left(\frac{1}{\rho_-(s)}-\frac{1}{r}\right)=\frac{2\rho}{r^2-\rho^2},
$$
$$
B(s)=A'(s)-3\left(\frac{\rho_+}{\rho_-}\right)\left(\frac{1}{r+\rho}-\frac{1}{r}\right)\left(\frac{r+\rho}{r-\rho}\right)'=
$$
$$
=A'(s)+6\frac{\rho\rho'(s)}{(r+\rho)^2(r-\rho)}.
$$
In order to integrate (\ref{linear1}) we compute the integrating factor
$$
\mu=e^{\int \frac{B(s)}{A(s)}\ ds}=A(s)e^{3\int \frac{d\rho}{(r+\rho)}\ }=\frac{2\rho(r+\rho)^2}{r-\rho} .
$$
With the help of the integrating factor we get the solution:
$$
\mu\cdot u(s)=C.
$$
Here $C$ is a constant different from $0$, since otherwise $u$ is zero identically but $u$ does not vanish by the definition.
Substituting the expression for $\mu$  via $\rho_+$ we get:
$$
2 u\cdot\frac{(2r-\rho_+)^2(r-\rho_+)}{\rho_+}=C.
$$

Using formula (\ref{k}) for the curvature we get the identity which is valid for any point of the curve $\gamma_{+r}$:
\begin{equation}
2\frac{|\nabla \tilde F|^3\left(2r-\frac{|\nabla \tilde F|^3}{H(\tilde F)}\right)^2\left(r-\frac{|\nabla \tilde F|^3}{H(\tilde F)}\right)}{\frac{|\nabla \tilde F|^3}{H(\tilde F)}}=C.
\end{equation}
So we have:
\begin{equation}\label{final}
2\left(2rH(\tilde F)-|\nabla \tilde F|^3\right)^2\left(rH(\tilde F)-|\nabla \tilde F|^3\right)=C\cdot H(\tilde F)^2.
\end{equation}

In order to analyze this equation, consider a singular point $Q$ of algebraic curve $\{f_+=0\}$.  Let $t$ be a local parameter  along a local branch of the curve at $Q$ given by the equations: $$
y=c\ t^p(1+o(1)),
x=t^q;\ p,q \in \mathbf N,\  p>q\geq 1.
$$In order to prove Theorem \ref{main} we need to show that $\frac{p}{q}\geq 2$.
We can write $$H(\tilde F)\sim t^a\quad {\rm and}\quad  |\nabla \tilde F|^3\sim t^b,\quad a,b>0$$  near $Q$ along the branch, where we compute $a,b$ via $p,q$ below.
If $b\geq a$ then we have that  the LHS of (\ref{final}) is of order  $t^{3a}$, while the RHS is of order $t^{2a}$, impossible.
Thus we must have $a>b>0$, and then the LHS is of order $t^{3b}$ and we get:
\begin{equation}\label{32}
3b=2a.
\end{equation}
It then follows from (\ref{32}) that 
\begin{equation}\label{a-b}
a-b=\frac{a}{3}>0.
\end{equation}

Next we have for the branch:
$$y-\phi(x)=0,\quad
\phi=c\ x^{\frac{p}{q}}+o(x^{\frac{p}{q}}).
$$
Moreover, we can factorize (see for instance Theorem 2.6.6 of \cite{Wall}): $$\tilde F=(y-\phi(x)) g,$$ then we have along the branch (more precisely, along a pro-branch defined over a sector, see Sections 2.1 and 4.1 of \cite{Wall}):
$$
H(\tilde F)=g^3 H(y-\phi(x))=-g^3\cdot \phi''(x),
$$
$$
|\nabla \tilde F|^3=g^3|\nabla(y-\phi(x))|^3=g^3(1+\phi'(x)^2)^{\frac{3}{2}}.
$$
So we compute:
$$
a=3d+\left(\frac{p}{q}-2\right)q,\quad
b=3d,
$$where $d$ is the order of the function $g$.
Therefore we have:
$$
a-b=\left(\frac{p}{q}-2\right)q.
$$
This, and  (\ref{a-b}) imply that $
\frac{p}{q}>2
$,
completing the proof of Theorem \ref*{main}.
\section*{\bf Acknowledgments}We are grateful to Alexey Glutsyuk,
Eugene Shustin and Uriel Sinichkin for very helpful discussions.

\end{document}